\newtheorem{theorem}{Theorem}[section]
\newtheorem{corollary}[theorem]{Corollary}
\newtheorem{lemma}[theorem]{Lemma}
\newtheorem{proposition}[theorem]{Proposition}
\theoremstyle{definition}
\newtheorem{definition}[theorem]{Definition}
\newtheorem{remark}[theorem]{Remark}
\newtheorem{notation}[theorem]{Notation}
\newcommand{\bole}{\Box\hspace{-0.330cm}\vdash}
\newcommand{\bori}{\Box\hspace{-0.350cm}\dashv}
\newcommand{\A}{\mathcal{A}}
\newcommand{\E}{\mathcal{E}}
\newcommand{\NC}{\mathcal{NC}}
\newcommand{\Hom}{\operatorname{Hom}}
\newcommand{\bb}{\mathbb{B}}
\newcommand{\cc}{\mathbb{C}}
\renewcommand{\ggg}{\mathbb{G}}
\renewcommand{\AA}{\mathcal{A}}
\newcommand{\CC}{\mathcal{C}}
\newcommand{\DD}{\mathcal{D}}
\newcommand{\EE}{\mathcal{E}}
\newcommand{\II}{\mathcal{I}}
\newcommand{\LL}{\mathcal{L}}
\newcommand{\MM}{\mathcal{M}}
\newcommand{\NN}{\mathcal{N}}
\newcommand{\PP}{\mathcal{P}}
\newcommand{\pai}{\left(}
\newcommand{\pad}{\right)}
\begin{document}

\title{Relations between infinitesimal non-commutative cumulants}

\author{A.~Celestino$\!\!\phantom{i}^{\ast}$, K.~Ebrahimi-Fard\footnote{
Department of Mathematical Sciences, Norwegian University of Science and Technology (NTNU), 7491 Trondheim, Norway. \texttt{adrian.celestino@ntnu.no}, \texttt{kurusch.ebrahimi-fard@ntnu.no}}, D.~Perales Anaya\footnote{Department of Pure Mathematics, University of Waterloo, Ontario, Canada. \texttt{dperales@uwaterloo.ca} }}

\date{ }

{\maketitle}

\begin{abstract}
Boolean, free and monotone cumulants as well as relations among them, have proven to be important in the study of non-commutative probability theory. Quite notably, Boolean cumulants were successfully used to study free infinite divisibility via the Boolean Bercovici--Pata bijection. On the other hand, in recent years the concept of infinitesimal non-commutative probability has been developed, together with the notion of infinitesimal cumulants which can be useful in the context of combinatorial questions.

In this paper, we show that the known relations among free, Boolean and monotone cumulants still hold in the infinitesimal framework. Our approach is based on the use of algebra of Grassmann numbers. Formulas involving infinitesimal cumulants can be obtained by applying a formal derivation to known formulas.

The relations between the various types of cumulants turn out to be captured via the shuffle algebra approach to moment-cumulant relations in non-commutative probability theory. In this formulation, (free, Boolean and monotone) cumulants are represented as elements of the Lie algebra of infinitesimal characters over a particular combinatorial Hopf algebra. The latter consists of the graded connected double tensor algebra defined over a non-commutative probability space and is neither commutative nor cocommutative. In this note it is shown how the shuffle algebra approach naturally extends to the notion of infinitesimal non-commutative probability space. The basic step consists in replacing the base field as target space of linear Hopf algebra maps by the algebra of Grassmann numbers defined over the base field. We also consider the infinitesimal analog of the Boolean Bercovici--Pata map. 
\end{abstract}

\tableofcontents


\section{Introduction}
\label{sec:intro}

Since Voiculescu introduced the theory of free probability \cite{Voi} together with the corresponding notion of free independence, other kinds of non-commutative probability theories have been studied, such as Boolean independence \cite{SW} and monotone independence \cite{Mur1}.
  A fundamental tool to attack problems in free probability is the notion of free cumulants, introduced by Speicher \cite{Spe} as a combinatorial way to study and use free independence. The analogue notions of Boolean cumulants \cite{SW} and monotone cumulants \cite{HS1,HS2} have been developed, sharing many common features. In particular, the combinatorics of cumulants shows close analogies between the different types of non-commutative probabilities. While free independence can be captured by the lattice of non-crossing partitions, Boolean independence makes use of interval partitions whereas monotone independence can be captured by monotone non-crossing partitions. 

Recently, in a series of papers, Ebrahimi-Fard and Patras \cite{EP1,EP2,EP3,EP4} have developed a group-theoretical framework for cumulants in non-commutative probability. This approach is based on the identification of a combinatorial word Hopf algebra, $H$, that is defined as an extension of a given non-commutative probability space $(\A,\varphi)$. The coproduct splits into two half-coproducts which provides $H$ with the structure of {unshuffle bialgebra} \cite{Foi}. This induces a splitting of the convolution product in the graded dual $H^*$ into a sum of two products, denoted by $\prec$ and $\succ$, making $(H^*,\prec,\succ)$ a non-commutative shuffle algebra (or dendriform algebra), 
where the convolution product is written as $f \star g = f \succ g + f \prec g$. The half-shuffles together with the convolution product define three exponential maps, $\exp^\star$, $\mathcal{E}_\prec$, $\E_\succ$. Each one of these maps establishes a bijection between the Lie algebra, $\mathfrak{g} \subset H^*$, of infinitesimal characters and the group, $G \subset H^*$, of characters on $H$: if $G \ni \Phi: H \to \mathbb{C}$ is a character, then there exist unique infinitesimal characters $\kappa, \beta, \rho \in \mathfrak{g}$ mapping $H$ to $\mathbb{C}$ such that
\begin{equation}
\label{1}
	\Phi = \exp^\star(\rho) = \mathcal{E}_\prec(\kappa) = \E_\succ(\beta).
\end{equation}
These equations encode the combinatorial description of the monotone, free and Boolean independences, respectively, given by the moment-cumulant formulas \cite{EP1,EP2}. More precisely, the corresponding multivariate moment-cumulant formulas can be obtained by evaluating \eqref{1} in a word $w = a_1a_2 \cdots a_n \in H$:
\allowdisplaybreaks
\begin{align*}
	m_n(a_1,a_2, \ldots, a_n) 
	&:= \Phi(a_1a_2 \cdots a_n)\\	
	&= \mathcal{E}_\prec(\kappa)(w) 
		= \sum_{\pi \in \NC(n)} r_{\pi}(a_1,\ldots,a_n)\\
	&=  \exp^\star(\rho)(w) 
		= \sum_{\pi \in \NC(n)}\frac{1}{\tau(\pi)!} h_\pi(a_1,\ldots,a_n)\\
	&= \E_\succ(\beta)(w) 
		=\sum_{I \in \II(n)}  b_{\pi}(a_1,\ldots,a_n),
\end{align*}
where the free, monotone and Boolean cumulants are given as the images of the corresponding infinitesimal Hopf algebra characters, that is, $\kappa(w)=r_{n}(a_1,a_2, \ldots, a_n)$, $\rho(w)=h_{n}(a_1,a_2, \ldots, a_n)$ and $\beta(w) =  b_{n}(a_1,a_2, \ldots, a_n)$. 
Here, $\NC(n)$ and $\mathcal{I}(n)$ denote the lattices of non-crossing respectively interval set partitions of order $n$ and $b_{\pi}$, $h_\pi$ and $r_{\pi}$ are defined multiplicatively with respect to the blocks in the set partition $\pi$ (see Notation \ref{notat:multiplicative} below).

Explicit relations among Boolean, free and monotone cumulants where studied in detail by Arizmendi et al \cite{AHLV}. These relations have proven to be important in the study of non-commutative probability theory. For instance, Boolean cumulants were successfully used to study free infinite divisibility via the Boolean Bercovici--Pata bijection (see \cite{BN1}, \cite{BN2}, \cite{BN3}). 
From the shuffle algebra perspective, the authors in \cite{EP2} described these relations between cumulants through relations between the three exponential maps and their corresponding logarithms, using the shuffle adjoint operation in the (pre-)Lie algebra of infinitesimal characters.

\smallskip

Variations and extensions of free probability have arisen from both theoretical and applied problems. In this paper we are specifically concerned with the notion of \textit{infinitesimal free probability}, which was introduced in \cite{BS} and \cite{BGN}. This framework consists of a triple $(\A,\varphi,\varphi^\prime)$ where $\AA$ is an algebra and $\varphi$, $\varphi^\prime$ are functionals. The term infinitesimal refers to the intuitive idea that we can see $\varphi^\prime$ as the derivative of a continuous family of distributions converging to $\varphi$. This theory proved to be useful in studying certain random matrices, specifically the asymptotics of finite-rank perturbations \cite{Shl} as well as random matrix models with discrete spectrum \cite{CHS}. Regarding the combinatorial perspective, F\'evrier and Nica \cite{FN} introduced the notion of infinitesimal free cumulants. The vanishing of mixed infinitesimal free cumulants characterises infinitesimal freeness. The close analogy between free cumulants and free infinitesimal cumulants becomes more transparent when combining $\varphi$ and $\varphi^\prime$ into a $\cc$-linear functional $\tilde\varphi$ that takes values in the algebra $\mathbb{G}$ of Grassmann numbers rather than the complex numbers. {\color{black} The infinitesimal versions of Boolean and monotone independence as well as the corresponding notions of cumulants were introduced by Hasebe in the 2011 work \cite{Has}\footnote{We would like to thank T.~Hasebe for bringing this reference to our attention.}.}

{\color{black}  In this paper we first revisit infinitesimal cumulants. To define this notion, }
we start from an infinitesimal non-commutative probability space $(\AA,\varphi,\varphi^\prime)$, and compute {\color{black}  free $\{r_n \colon \A^n\to \mathbb{C} \}_{n \ge 1}$,} Boolean $\{b_n \colon \A^n\to \mathbb{C} \}_{n \ge 1}$ and monotone $\{h_n \colon \A^n\to \mathbb{C} \}_{n \ge 1}$ functional cumulants together with the corresponding moment-cumulant formulas. The idea then is to formally differentiate these formulas to obtain the following relations:
\begin{eqnarray}
 {\color{black} \varphi^\prime_n(a_1, \ldots, a_n)} &=& \sum_{\pi \in \NN\CC(n)} \partial r_\pi(a_1, \ldots, a_n),\\
	\varphi^\prime_n(a_1, \ldots, a_n) &=& \sum_{\pi \in \II(n)} \partial b_\pi(a_1, \ldots, a_n),\\
	\varphi^\prime_n(a_1, \ldots, a_n) &=& \sum_{\pi\in \NN\CC(n)} \frac{1}{\tau(\pi)!} \partial h_\pi(a_1, \ldots, a_n),
\end{eqnarray}
where {\color{black} $\partial  r_\pi(a_1, \dots, a_n)$,} $\partial  b_\pi(a_1, \dots, a_n)$ and $\partial h_\pi(a_1, \dots, a_n)$ are formal derivations (the precise definition is given in Notation \ref{partial.notation} below). These formulas give recursive definitions of infinitesimal {\color{black}free}, Boolean and monotone cumulants. With the use of $\cc$-linear functionals into $\mathbb{G}$ and Möbius inversion formula, explicit formulas {\color{black}can be obtained} for infinitesimal {\color{black} free and} Boolean cumulants in terms of moments. We complement this approach by studying infinitesimal cumulants from the shuffle algebra viewpoint. In this framework, we consider Hopf algebra characters and infinitesimal\footnote{The reader may have noticed an apparent conflict of terminology regarding the use of the term infinitesimal. However, the precise meaning will be clear from context.} characters with values in the commutative algebra $\mathbb{G}$. Previous results from \cite{EP1,EP2} still hold in this framework and the infinitesimal versions of the corresponding formulas are obtained in purely algebraic terms.

The first result in this note shows that analogue relations among cumulants still hold in the infinitesimal framework. We refer the reader to the Appendix (Defs.~\ref{def:partitiontypes} and \ref{defi.irreducible.partitions}) for the definition of non-crossing (irreducible) set partitions.

\begin{theorem}
\label{Thm1}
Let $\NN\CC_{irr}(n)$ be the set of non-crossing irreducible set partitions of order $n$.
The following relations between infinitesimal cumulants $b^\prime,r^\prime,h^\prime$ are satisfied:
\allowdisplaybreaks
\begin{eqnarray}
	b^\prime_n (a_1, \dots, a_n) 
		&=& \sum_{\pi\in \NN\CC_{irr}(n)} \partial r_\pi(a_1, \dots, a_n)  \label{InfBooFree}\\
	r^\prime_n (a_1, \dots, a_n) 
		&=& \sum_{\pi\in \NN\CC_{irr}(n)} (-1)^{|\pi|-1} \partial b_\pi(a_1, \dots, a_n) \label{InfFreeBoo}\\
	b^\prime_n (a_1, \dots, a_n) 
		&=& \sum_{\pi\in \NN\CC_{irr}(n)} \frac{1}{\tau(\pi)!}\partial h_\pi(a_1, \dots, a_n) \label{InfBooMon} \\
	r^\prime_n (a_1, \dots, a_n) 
		&=& \sum_{\pi\in \NN\CC_{irr}(n)} \frac{(-1)^{|\pi|-1}}{\tau(\pi)!} \partial h_\pi(a_1, \dots, a_n). \label{InfFreeMon}
\end{eqnarray}
\end{theorem}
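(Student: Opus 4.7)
The plan is to \emph{lift} the four classical cumulant identities of Arizmendi--Hasebe--Lehner--Vargas \cite{AHLV} to the Grassmann-valued setting and then extract the coefficient of the nilpotent generator $\hbar$ of $\mathbb{G}$. Recall that \cite{AHLV} establishes, for an ordinary non-commutative probability space $(\AA,\varphi)$, the four identities obtained from \eqref{InfBooFree}--\eqref{InfFreeMon} by dropping the primes on the left-hand sides and replacing every $\partial c_\pi$ on the right-hand sides with the usual multiplicative $c_\pi$ (for $c\in\{b,r,h\}$).

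Given an infinitesimal non-commutative probability space $(\AA,\varphi,\varphi^\prime)$, I would work with the $\mathbb{G}$-valued functional $\tilde\varphi := \varphi + \hbar\varphi^\prime$, where $\hbar^2=0$, as introduced earlier in the paper. Since $\mathbb{G}$ is a commutative unital $\cc$-algebra, the standard combinatorial construction of Boolean, free and monotone cumulants goes through verbatim for $\tilde\varphi$ and yields $\mathbb{G}$-valued multilinear functionals $\tilde b_n,\tilde r_n,\tilde h_n$. The same applies to the AHLV argument, which only relies on Möbius inversion over $\NN\CC(n)$ and multiplicativity over blocks. Hence the four AHLV identities hold for $\tilde b,\tilde r,\tilde h$ as equalities in $\mathbb{G}$. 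Alternatively, as the introduction indicates, one can invoke the shuffle-algebra framework of \cite{EP2} applied to Hopf algebra characters with values in $\mathbb{G}$, where these relations are purely algebraic consequences of the three exponential bijections between infinitesimal characters and characters.

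The key computational observation is that for every $\pi=\{V_1,\dots,V_k\}\in \NN\CC(n)$ and every $c\in\{b,r,h\}$, using $\hbar^2=0$,
\[
    \tilde c_\pi \;=\; \prod_{j=1}^k \bigl(c_{|V_j|} + \hbar\, c^\prime_{|V_j|}\bigr)
    \;=\; c_\pi \,+\, \hbar\, \sum_{j=1}^k c^\prime_{|V_j|}\prod_{i\neq j} c_{|V_i|}
    \;=\; c_\pi + \hbar\,\partial c_\pi,
\]
where $\partial c_\pi$ is precisely the Leibniz derivation of Notation \ref{partial.notation}. Substituting this expansion, together with $\tilde c_n = c_n + \hbar\, c^\prime_n$, into the four Grassmann-valued AHLV identities and comparing the coefficients of $1$ and $\hbar$ produces, on the $\hbar$-level, exactly \eqref{InfBooFree}--\eqref{InfFreeMon}; the $1$-level simply recovers the original AHLV identities.

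The main (and essentially only) obstacle is to justify rigorously that the AHLV identities do lift to an arbitrary commutative unital $\cc$-algebra target. This is automatic in principle, because the underlying combinatorial proofs never require inverting non-units, but it must be made explicit either by reproving Möbius inversion over $\mathbb{G}$ or by appealing to the shuffle-algebra setup of \cite{EP2} in which the base field may be replaced by any commutative $\cc$-algebra. Once that step is in place, the theorem follows by the straightforward $\hbar$-coefficient extraction sketched above.
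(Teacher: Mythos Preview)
Your proposal is correct and follows essentially the same approach as the paper: the paper first establishes the four Grassmann-valued identities $\tilde b_n=\sum_{\pi\in\NN\CC_{irr}(n)}\tilde r_\pi$, etc., and then reads off the $\hbar$-coefficient using exactly the computation $\tilde c_\pi=c_\pi+\hbar\,\partial c_\pi$ that you wrote down. The paper carries out the lift explicitly (mixing a direct combinatorial argument with the shuffle-algebra fixed-point equations over $\mathbb{G}$) rather than simply asserting that the AHLV proofs transfer, but this is precisely the step you flagged as the only real obligation.
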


We show the statements of this theorem using both an intuitive and the shuffle algebra approach. One of the main aims of this paper is to understand the usefulness of the latter in studying combinatorial questions in non-commutative probability using Hopf algebraic tools.

As an application, we propose an infinitesimal analog of the Boolean Bercovici--Pata map at the algebraic level. Recall that this map assigns to each distribution $\mu$ another distribution $\bb(\mu)$, such that $r_n(\bb(\mu))=b_n(\mu)$. On the other hand, we can define an infinitesimal version of the main transform studied in \cite{BN2} and prove that analogue results still hold. This is summarized in the following theorem (the precise definitions and notations are given in Section \ref{sec:Bercovici.Pata}).

\begin{theorem}
	\label{Thm1.2}
Given an infinitesimal law $\tilde\mu$ and a real $t \geq 0$, we can define $\tilde\bb_t$ to be the function sending infinitesimal laws to infinitesimal laws, such that
$$
	\tilde\bb_t(\tilde\mu)=\pai \tilde\mu^{\boxplus 1+t}\pad^{\uplus \frac{1}{1+t}}.
$$
Then we have the following:

\begin{enumerate}
\item The maps $\{\tilde\bb_t|t\geq 0\}$ satisfy that 
$$
	\tilde\bb_s\circ\tilde\bb_t=\tilde\bb_{s+t} \qquad \forall s,t \geq 0.
$$ 
\item For all $n$, $\tilde r_n (\tilde\bb_1(\tilde\mu))= \tilde b_n(\tilde\mu)$.
This means that $\tilde\bb_1(\tilde\mu)=\tilde\bb(\tilde\mu)$ is the law obtained from the infinitesimal Boolean Bercovici--Pata map.
\end{enumerate}
\end{theorem}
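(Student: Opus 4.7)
The plan is to lift the classical Belinschi--Nica results \cite{BN1} to the infinitesimal (Grassmann-valued) setting. Encode an infinitesimal law $\tilde\mu$ by the Grassmann-valued moment functional $\tilde\varphi = \varphi + \hbar \varphi'$ with $\hbar^2=0$, so that the infinitesimal cumulants $\tilde r_n = r_n + \hbar r'_n$ and $\tilde b_n = b_n + \hbar b'_n$ live in $\mathbb{G}$. The key preliminary lemma I would establish first is that for every $t \geq 0$ the convolution powers $\tilde\mu^{\boxplus t}$ and $\tilde\mu^{\uplus t}$ are well-defined infinitesimal laws characterized by the cumulant-scaling identities
$$\tilde r_n(\tilde\mu^{\boxplus t}) = t\, \tilde r_n(\tilde\mu), \qquad \tilde b_n(\tilde\mu^{\uplus t}) = t\, \tilde b_n(\tilde\mu).$$
Decomposing into $\hbar^0$ and $\hbar^1$ components, these split into the classical cumulant-scaling identities together with their derivative counterparts $r'_n(\mu^{\boxplus t}) = t\, r'_n(\mu)$ and $b'_n(\mu^{\uplus t}) = t\, b'_n(\mu)$; the latter follow either by formal differentiation of the classical identities, or equivalently from the shuffle-algebra formulation with $\mathbb{G}$-valued characters.

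For part~(2), the approach is to exploit the tautological relation $\tilde\bb_1(\tilde\mu)^{\uplus 2} = \tilde\mu^{\boxplus 2}$ and mimic the classical Bercovici--Pata computation. The scalar identity $r_n(\bb_1(\mu)) = b_n(\mu)$ is a polynomial identity in the free cumulants of $\mu$, obtained by combining the free-to-Boolean conversion of Theorem~\ref{Thm1} with the scalings $r_k(\mu^{\boxplus 2}) = 2 r_k(\mu)$ and $b_k(\nu^{\uplus 1/2}) = \tfrac{1}{2} b_k(\nu)$. Each of these ingredients carries through to the Grassmann coefficient algebra via the preliminary lemma above, so the polynomial identity lifts verbatim to $\tilde r_n(\tilde\bb_1(\tilde\mu)) = \tilde b_n(\tilde\mu)$.

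For part~(1), I would expand
$$\tilde\bb_s(\tilde\bb_t(\tilde\mu)) = \Bigl(\bigl((\tilde\mu^{\boxplus 1+t})^{\uplus \frac{1}{1+t}}\bigr)^{\boxplus 1+s}\Bigr)^{\uplus \frac{1}{1+s}}$$
and compare to $\tilde\bb_{s+t}(\tilde\mu) = (\tilde\mu^{\boxplus 1+s+t})^{\uplus \frac{1}{1+s+t}}$. By injectivity of the cumulant-to-law correspondence it suffices to check equality of $\tilde r_n$ on both sides, and both sides expand as polynomial expressions with coefficients in $\mathbb{Q}(s,t)$ in the infinitesimal cumulants of $\tilde\mu$. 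The classical Belinschi--Nica semigroup asserts this polynomial equality at the scalar level; the same lifting argument promotes it to $\mathbb{G}$-coefficients, yielding $\tilde\bb_s \circ \tilde\bb_t = \tilde\bb_{s+t}$.

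The principal obstacle is giving a clean definition of $\tilde\mu^{\boxplus t}$ and $\tilde\mu^{\uplus t}$ for non-integer $t$. In the scalar setting this is handled analytically via the $R$- and $\eta$-transforms, whereas in the algebraic framework of the present paper the cleanest route is to \emph{define} these powers by the cumulant-scaling identities above, and then verify their internal consistency: agreement with iterated convolution for integer $t$, the intrinsic semigroup property $(\tilde\mu^{\boxplus t})^{\boxplus s} = \tilde\mu^{\boxplus st}$ (and its Boolean analogue), and naturality under the Grassmann extension of the base field.
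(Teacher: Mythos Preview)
Your approach is correct and follows the ``Grassmann lifting'' philosophy the paper develops in Sections~3--5: the classical Belinschi--Nica identities are polynomial identities in the cumulants with coefficients in $\mathbb{Q}(s,t)$, and since the moment-cumulant and cumulant-cumulant relations (Theorem~\ref{Thm1}) lift verbatim from $\mathbb{C}$ to $\mathbb{G}$, the polynomial identities lift too. Your ``principal obstacle'' is also handled exactly as you propose: the paper \emph{defines} $\tilde\mu^{\boxplus s}$ and $\tilde\mu^{\uplus s}$ by the cumulant-scaling identities.

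The paper's main proof, however, takes a genuinely different route. It works directly in the shuffle algebra $(\operatorname{Hom}(H,\mathbb{G}),\prec,\succ)$ and establishes the closed-form expression
\[
\tilde{\mathbb{B}}_t(\tilde\Phi) = \mathcal{E}_\prec\bigl(\theta_{\mathcal{E}_\prec(t\tilde\kappa)}(\tilde\kappa)\bigr),
\]
by invoking Lemma~42 of \cite{EP4} (whose proof goes through unchanged over any commutative target algebra). Part~(2) is then immediate from $\theta_{\tilde\Phi}(\tilde\kappa)=\tilde\beta$, and the semigroup property follows from a short computation with the shuffle adjoint action $\theta$ and the identity $\mathcal{E}_\prec(t\tilde\kappa)\star(\mathcal{E}_\prec(s\tilde\kappa)\ \bole\ \mathcal{E}_\prec(t\tilde\kappa)) = \mathcal{E}_\prec((s+t)\tilde\kappa)$. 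This is self-contained and never invokes the scalar Belinschi--Nica theorem as a black box, whereas your argument is more elementary but relies on already knowing the classical result. The paper does include, in a remark after the proof, a combinatorial verification of part~(2) via the min-max order on $\mathcal{NC}_{irr}(n)$ that is close in spirit to your proposal, but it does not spell out part~(1) combinatorially.
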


\smallskip

In addition to the foregoing introductory section, the rest of the paper is divided into five more sections. In Section \ref{sec:preliminaries} we review Boolean, free and monotone cumulants, as well as the Hopf algebraic approach to the corresponding moment-cumulant relations.  In Section \ref{sec:InfCumulants} we first review the notion of infinitesimal non-commutative probability space together with { infinitesimal cumulants. We also study the infinitesimal Boolean cumulant multivariable series.}  In Section \ref{sec:ShuffleInfCumulants} we show how the shuffle algebra approach naturally extends to the infinitesimal setting.  Section \ref{sec:RelationsInfCum} is devoted to the proof of Theorem \ref{Thm1}. As an application we prove Theorem \ref{Thm1.2} in Section \ref{sec:Bercovici.Pata}. In an Appendix (Section \ref{sec:appendix}) we collect some basics on set partitions and the different types of independences.

\medskip

Acknowledgements: The authors would like to thank Alexandru Nica for his valuable comments. We also would like to thank Takahiro Hasebe for bringing reference \cite{Has} to our attention. This work was partially supported by the project Pure Mathematics in Norway, funded by Trond Mohn Foundation and Troms{\o} Research Foundation. Part of this research was conducted while the third author was visiting NTNU's Department of Mathematical Sciences in Trondheim. He expresses his gratitude for the warm hospitality and stimulating atmosphere at NTNU, and for travel support from NSERC, Canada. The third author also receives support from CONACyT (Mexico) via the scholarship 714236. 


\section{Preliminaries}
\label{sec:preliminaries}

When it is not specified, all the objects (vector spaces, algebras, coalgebras, pre-Lie algebras, etc.) will be taken over the field of complex numbers, denoted $\mathbb{C}$.


\subsection{Cumulants in non-commutative probability}
\label{ssec:cumulants}

A \textit{non-commutative probability space} (ncps) is a pair $(\A,\varphi)$, where $\A$ is a unital algebra over $\mathbb{C}$ and $\varphi \colon \A \to \mathbb{C}$ is a linear functional, such that $\varphi(1_\A) =1$. The \textit{$n$-th multivariate moment} is the multilinear functional $\varphi_n: \A^n \to \mathbb{C}$, such that $\varphi_n(a_1, \ldots, a_n):=\varphi(a_1 \cdot_{\!\scriptscriptstyle{\A}} \cdots \cdot_{\!\scriptscriptstyle{\A}} a_n) \in \mathbb{C}$, for elements $a_1, \ldots, a_n \in \A$, where $ \cdot_{\!\scriptscriptstyle{\A}}$ stands for the product in $\A$. 

In this framework we can define the notions of free, Boolean and monotone independence (see Appendix \ref{ssec:independences}). The paper at hand is concerned with cumulants, which are the major combinatorial tool to handle these three types of independence. Let us first fix some notation.

\begin{notation}\label{notat:multiplicative}
Let $\pi$ be an element in $\PP(n)$, the set partitions of $[n]:=\{1,\dots,n\}$ (see Appendix). Given any family of multilinear functionals $\{f_m \colon \A^{m}\to \mathbb{C}\}_{m\geq1}$, we denote 
$$
	f_\pi(a_1, \ldots, a_n):=\prod_{V \in \pi}f_{|V|}(a_V).
$$ 
Here, $V:=\{{v_1}, \ldots, {v_k}\} \in \pi$ is a block of $\pi$ (where ${v_1} < \cdots < {v_k}$ are in natural order) and we define $f_{|V|}(a_V):=f_{k}(a_{v_1},\ldots,a_{v_k})$.
\end{notation}

Now we are ready to go over the definitions of free, Boolean and monotone cumulants.

\begin{definition} (Cumulants)
Let $(\A,\varphi)$ be a ncps.
\begin{itemize}
\item \textit{Free cumulants} form the family of multilinear functionals $\{r_n \colon \A^{n}\to \mathbb{C}\}_{n\geq1}$ recursively defined by the following formula:
\begin{equation}
\label{FreeMomCum}
	\varphi_n(a_1, \dots, a_n) =\sum_{\pi \in \NN\CC(n)} r_\pi(a_1, \dots, a_n).
\end{equation} 

\item \textit{Boolean cumulants} form the family of multilinear functionals $\{ b_n :\A^{n}\to \mathbb{C}\}_{n \ge 1}$ recursively defined by the following formula:
\begin{equation}
\label{BooMomCum}
	\varphi_n(a_1, \dots, a_n) =\sum_{\pi \in \II(n)}  b_\pi(a_1, \dots, a_n).
\end{equation} 

\item \textit{Monotone cumulants} form the family of multilinear functionals $\{h_n \colon \A^n\to \mathbb{C} \}_{n \ge 1}$ recursively defined by the following formula:
\begin{equation}
\label{MonMomCum}
	\varphi_n(a_1, \dots, a_n)=\sum_{\pi\in \NN\CC(n)} \frac{1}{\tau(\pi)!} h_\pi(a_1, \dots, a_n).
\end{equation} 
\end{itemize}
\end{definition}

\begin{remark}
Cumulants are well defined since the right-hand sides of the foregoing equations contain only one term of size $n$ ($r_n,b_n$ or $h_n$) while the other terms are monomials of cumulants of smaller sizes (though, the total degree $n$ is preserved). This defines a triangular system of equations to compute cumulants to all orders.

We can invert the above equations giving free and Boolean cumulants directly by the formulas
\allowdisplaybreaks
\begin{align*}
	r_n(a_1, \dots, a_n)
		&=\sum_{\sigma \in \NN\CC(n)} \text{M\"ob}(\sigma, 1_n) \varphi_\sigma(a_1, \dots, a_n),\\
	b_n(a_1, \dots, a_n) 
		&=\sum_{\sigma \in \II(n)} (-1)^{|\sigma|-1}\varphi_\sigma(a_1, \dots, a_n).
\end{align*}
The reason why these formulas are equivalent to the previous definition is due to Möbius inversion in the lattices of non-crossing and interval partitions, respectively.
\end{remark}

Another way in which we can relate moments with cumulants is via formal power series. Let $\cc\langle z_1,\dots, z_k\rangle$ denote the algebra of polynomials in non-commuting indeterminates $z_1,\dots, z_k$, and let $\cc\langle\langle z_1,\dots, z_k\rangle\rangle$ be the set of power series with complex coefficients in the non-commuting indeterminates $z_1,\dots, z_k$. {Now, fix a $k$-tuple of elements $a=(a_1,\dots, a_k)$ from $\A$ and recall that $\varphi(1_\A)=1$. Then we can construct the \textit{multivariate moment series} $M_a$ of $a$}:
$$
	M_a(z_1, \dots, z_k)
	=\sum_{n=1}^\infty \sum_{i_1, \dots, i_n=1}^k 
	\varphi_n(a_{i_1},\dots, a_{i_n}) z_{i_1} \cdots z_{i_n} \in \cc\langle\langle z_1,\dots, z_k\rangle\rangle.
$$

{Analogously, we can construct the multivariate series, $R_a$, $B_a$, $H_a$ in $\cc\langle\langle z_1,\dots, z_k\rangle\rangle$, that have free, Boolean and monotone cumulants, respectively, as coefficients, such that}
\begin{align*}
	R_a(z_1, \dots, z_k)
	&=\sum_{n=1}^\infty \sum_{i_1, \dots, i_n=1}^k r_n(a_{i_1},\dots, a_{i_n}) z_{i_1} \cdots z_{i_n},\\
	B_a(z_1, \dots, z_k)
	&=\sum_{n=1}^\infty \sum_{i_1, \dots, i_n=1}^k b_n(a_{i_1},\dots, a_{i_n}) z_{i_1} \cdots z_{i_n},\\
	H_a(z_1, \dots, z_k)
	&=\sum_{n=1}^\infty \sum_{i_1, \dots, i_n=1}^k h_n(a_{i_1},\dots, a_{i_n}) z_{i_1} \cdots z_{i_n}.
\end{align*}

It is known (see \cite{NS}) that the moment series $M_a$ and the \textit{R-transform} $R_a$ satisfy the relation
\begin{equation}
\label{eq.mom.cum.gen.series.rel}
	M_a(z_1, \dots, z_k) 
	=R_a \Big(z_1(1+M_a(z_1, \dots, z_k)), \dots, z_k(1+M_a(z_1, \dots, z_k))\Big).
\end{equation}
Also, it can be seen that $M_a$ and the \textit{$\eta$-series} $B_a$ (typically named $\eta_a$ instead of $B_a$) satisfy 
\begin{equation}
\label{boolean.gen.series.rel}
	M_a(z_1, \dots, z_k) =(1+M_a(z_1, \dots, z_k))B_a(z_1, \dots, z_k).
\end{equation}
We are not aware of a direct relation between $M_a$ and $H_a$, and we refer the reader to Section 6 of \cite{HS2} for a discussion on this.

As mentioned before, explicit relations among Boolean, free and monotone cumulants have been useful in the study of non-commutative probability theory. In this paper we will focus on the following formulas that can be found in \cite{AHLV}:
\begin{eqnarray}
	b_n (a_1, \dots, a_n) 
		&=& \sum_{\pi\in \NN\CC_{irr}(n)} r_\pi(a_1, \dots, a_n)  \label{BooFree}\\
	r_n (a_1, \dots, a_n) 
		&=& \sum_{\pi\in \NN\CC_{irr}(n)} (-1)^{|\pi|-1}  b_\pi(a_1, \dots, a_n) \label{FreeBoo} \\
	b_n (a_1, \dots, a_n) 
		&=& \sum_{\pi\in \NN\CC_{irr}(n)} \frac{1}{\tau(\pi)!} h_\pi(a_1, \dots, a_n) \label{BooMon} \\
	r_n (a_1, \dots, a_n) 
		&=&  \sum_{\pi\in \NN\CC_{irr}(n)} \frac{(-1)^{|\pi|-1}}{\tau(\pi)!} h_\pi(a_1, \dots, a_n) \label{FreeMon}
\end{eqnarray}


{{We will also remark on the inverted multivariate relations of \eqref{BooMon} and \eqref{FreeMon}, expressing monotone in terms of Boolean and free cumulants, respectively, which were given in the recent work \cite{Magnus2021}.}}


\subsection{Hopf--algebraic approach to non-commutative independence}
\label{ssec:HopfPreliminaries}

Let $(\A,\varphi)$ be a ncps. We consider the framework of shuffle algebra for non-commutative probability. Consider the tensor algebra $T_+(\A) = \bigoplus_{n>0} \A^{\otimes n}$ and define the double tensor algebra
$$
	H := T(T_+(\A)) = {{\mathbb{C}\mathbf{1} \oplus \bigoplus_{n > 0} T_+(\A)^{\otimes n}.}}
$$ 
This is a graded connected non-commutative non-cocommutative Hopf algebra with unit $\mathbf{1}$ {{and counit $\varepsilon: H \to \mathbb{C}$. The latter maps $H_+:=\bigoplus_{n > 0} T_+(\A)^{\otimes n}$ to zero and the unit $\mathbf{1}$ to one, i.e. $\varepsilon(z\mathbf{1})=z \in \mathbb{C}$. Product and coproduct are given as follows.}}
\\\textit{Product}: Elements in $T_+(\A)$ are written as words, i.e., $w=a_1\cdots a_k := a_1\otimes \cdots \otimes a_k$. The length of a word, that is, its number of letters is denoted $\mathrm{deg}(w)=k$. The product in $H$ is denoted using the bar-notation, that is, given words $w_1,w_2 \in T_+(\A)$, then their product is $w_1 | w_2 \in H$. Hence, elements in $H$ are denoted by $w_1 | \cdots | w_n$, for words $w_1,\ldots,w_n \in T_+(\A)$. 
\\\textit{Coproduct}: We start with a canonically ordered subset $S=\{s_1 < \cdots < s_m\} \subset [n]:=\{1,\ldots,n\}$ and define for $a_1\cdots a_n \in \A^{\otimes n}$ the word $a_S = a_{s_1}a_{s_2}\cdots a_{s_m}$. We denote by $J_1,\ldots,J_k$ the interval components of $[n]\backslash S$. Then, we define the linear map $\Delta: T_+(\A)  \to H \otimes H$
\begin{equation}
\label{coproduct}
	a_1\cdots a_n  \mapsto  \displaystyle \Delta(a_1\cdots a_n )
		:=\sum_{S\subseteq [n]} a_S \otimes a_{J_1}|\cdots | a_{J_k}
\end{equation}
and extend it multiplicatively to a map $\Delta$ on all of $H$, where $\Delta(\mathbf{1}) := \mathbf{1} \otimes \mathbf{1}$. This coproduct $\Delta$ turns $H$ into a graded connected non-commutative non-cocommutative Hopf algebra \cite{EP1}. 
\par The coproduct \eqref{coproduct} can be split into so-called left and right half-shuffle coproducts, $\Delta_\prec$ respectively $\Delta_\succ$, such that 
$$
	\Delta = \Delta_\prec+\Delta_\succ.
$$ 
More precisely,
\begin{eqnarray*}
	\Delta_\prec(a_1\cdots a_n) = \sum_{1\in S\subseteq [n]} a_S \otimes a_{J_1}|\cdots | a_{J_k},\\ 
	\Delta_\succ(a_1\cdots a_n) = \sum_{1\not\in S\subset [n]} a_S \otimes a_{J_1}|\cdots | a_{J_k},
\end{eqnarray*}
where $a_1\cdots a_n\in \A^{\otimes n}$. These maps are extended to $H$ by defining for $w_1 \in T_+(\A)$ and $w_2 \in H$
$$
	\Delta_\prec(w_1| w_2):=\Delta_\prec(w_1)\Delta(w_2),
$$
and similarly for $\Delta_\succ$. It turns out that $H$ together with $\Delta_\prec$, $\Delta_\succ$ becomes a \textit{unital unshuffle bialgebra} \cite{EP2,EP4}, a dual notion of non-commutative shuffle algebra \cite{Foi}.

{{\begin{notation}\label{notat:dual}
We denote by $H^\ast := \operatorname{Hom}(H,\mathbb{C})$ the graded dual space of $\mathbb{C}$-linear maps from $H$ to $\mathbb{C}$. It becomes a unital associative algebra when equipped with the convolution product defined in terms of the coproduct \eqref{coproduct}
\begin{equation}
\label{eq:convol}
	f\star g = m_\mathbb{C}(f\otimes g)\Delta,
\end{equation}
for $f,g\in \operatorname{Hom}(H,\mathbb{C})$, where $m_\mathbb{C}$ stands for the multiplication in $\mathbb{C}$. The unit is given by the Hopf algebra counit $\varepsilon: H\to\mathbb{C}$. We say that $\Phi \in \operatorname{Hom}(H,\mathbb{C})$ is a Hopf algebra \textit{character} if $\Phi(\mathbf{1})=1$ and $\Phi$ is multiplicative with respect to the product in $H$, i.e., $\Phi(w_1 | w_2 )=\Phi(w_1) \Phi(w_2)$. In the same way, we say that $\kappa \in \operatorname{Hom}(H,\mathbb{C})$ is an \textit{infinitesimal character} if $\kappa(\mathbf{1})=0$ and $\kappa(w_1|w_2)=0$ for any $w_1,w_2 \in H_+.$  
\end{notation}
One can show that the set $G$ of characters forms a group with respect to the convolution product \eqref{eq:convol}. The inverse of an element in $G$ is given by composition with the antipode of $H$. Moreover, the set $\mathfrak{g}$ of infinitesimal characters forms a Lie algebra. From the classical theory of Hopf algebras \cite{Sweedler69}, we know that the exponential map with respect to the convolution product provides a set bijection between $G$ and $\mathfrak{g}$.
}}

\par If we replace the coproduct $\Delta$ in \eqref{eq:convol} by the half-shuffle coproducts, $\Delta_\prec$ and $\Delta_\succ$, we obtain two new linear maps 
\begin{eqnarray*}
	f \prec g &=& m_\mathbb{C}(f\otimes g)\Delta_\prec, \\
	f \succ g &=& m_\mathbb{C}(f\otimes g)\Delta_\succ.
\end{eqnarray*}
Neither of these two operations is  associative. Indeed, they satisfy the so-called {\it{shuffle identities}}:
\begin{eqnarray}
	(f \prec g)\prec h &=& f \prec(g \star h)        		\label{A1}\\
  	(f \succ g)\prec h &=& f \succ(g\prec h)   			\label{A2}\\
        f \succ(g\succ h)  &=& (f \star g)\succ h,	      	        \label{A3}
\end{eqnarray}
such that the following theorem can be proven.

\begin{theorem}[Proposition 6 in \cite{EP1}]
$(\operatorname{Hom}(H,\mathbb{C}), \prec,\succ)$ is a unital shuffle algebra.
\end{theorem}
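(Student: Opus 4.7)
The plan is to derive the three shuffle identities (A1)--(A3) and the unital compatibility with $\varepsilon$ by dualizing the corresponding co-shuffle identities on the half-coproducts $\Delta_\prec$ and $\Delta_\succ$, which are the defining axioms of the unshuffle bialgebra structure on $H$ already referenced in the excerpt. In this way the proof reduces from an algebraic statement about three bilinear operations on $\Hom(H,\mathbb{C})$ to three coalgebraic identities on $H$ itself.

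Concretely, the co-shuffle identities one needs are
\begin{align*}
(\Delta_\prec \otimes \mathrm{id})\Delta_\prec &= (\mathrm{id} \otimes \Delta)\Delta_\prec,\\
(\Delta_\succ \otimes \mathrm{id})\Delta_\prec &= (\mathrm{id} \otimes \Delta_\prec)\Delta_\succ,\\
(\Delta \otimes \mathrm{id})\Delta_\succ &= (\mathrm{id} \otimes \Delta_\succ)\Delta_\succ.
\end{align*}
Pairing each of these against $f \otimes g \otimes h$ and multiplying in $\mathbb{C}$ transports it directly to exactly one of (A1), (A2), (A3); for instance the first identity yields $(f \prec g) \prec h = f \prec (g \star h)$ once one recognizes $f \prec g = m_\mathbb{C}(f\otimes g)\Delta_\prec$ and $g \star h = m_\mathbb{C}(g \otimes h)\Delta$. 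So the main work reduces to verifying these three co-shuffle identities on $H$.

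To check them on generators $a_1\cdots a_n \in T_+(\A)$, I would expand $\Delta_\prec$ as the sum over subsets $S\subseteq[n]$ containing $1$ and $\Delta_\succ$ as the sum over $S$ not containing $1$, as specified in \eqref{coproduct}. Iterating once produces a double sum indexed by nested subsets $S'\subseteq S\subseteq[n]$ together with the interval decompositions of $[n]\setminus S$ and $S\setminus S'$. Both sides of each co-shuffle identity parametrize the same family of nested configurations, and the location of the index $1$ among $S'$, $S\setminus S'$, and $[n]\setminus S$ dictates which half-coproduct is selected at each level. After matching terms bijectively, the extension to all of $H$ is automatic from the multiplicative rules $\Delta_\prec(w_1|w_2) := \Delta_\prec(w_1)\Delta(w_2)$ and the analogous one for $\Delta_\succ$.

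The unital axioms follow because $\varepsilon$ is the two-sided unit for $\star$ by general Hopf-algebraic principles, and for $w\in T_+(\A)$ the left factor of $\Delta_\prec(w)$ is never $\mathbf{1}$ (since $S$ must contain $1$ and hence be nonempty), while the left factor of $\Delta_\succ(w)$ can be $\mathbf{1}$ precisely via $S = \emptyset$, yielding the correct unit relations for $\prec$ and $\succ$. The main obstacle I anticipate is purely combinatorial bookkeeping: one must carefully track the interval components of $[n]\setminus S$ and of $S\setminus S'$, and ensure the bar-separated factorizations on the two sides of each co-shuffle identity line up under the bijection on nested subsets. No analytic input is required.
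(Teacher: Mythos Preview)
The paper does not supply its own proof of this theorem; it is stated as a citation of Proposition~6 in \cite{EP1}, after noting that $H$ with $\Delta_\prec,\Delta_\succ$ is a unital unshuffle bialgebra. Your dualization argument is correct and is exactly the standard route: the three co-shuffle identities you wrote are the defining axioms of an unshuffle bialgebra, and pairing them against $f\otimes g\otimes h$ yields \eqref{A1}--\eqref{A3} immediately, so your outline matches what the cited reference does.
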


The three products, $(\star, \prec,\succ)$, defined on $\operatorname{Hom}(H,\mathbb{C})$, imply an intricate structure on the dual space of $H$. In general, given a unital shuffle algebra $(D,\prec,\succ)$ with associative product $a \star b = a\prec b+a\succ b$, for $a,b\in D$, we can define the usual exponential map relative to the product $\star$ by
$$
	\exp^\star (a) := 1_D + \sum_{n\geq 1} \frac{a^{\star n}}{n!}.
$$
In an analogous way, we define the half-shuffle exponential maps, $\E_\prec$ and $\E_\succ$, by
$$
	\E_\prec(a) := 1_D + \sum_{n\geq 1} a^{\prec n} ,
	\qquad     
	\E_\succ(a) := 1_D + \sum_{n\geq 1} a^{\succ n},
$$
where $a^{\prec n}:=a \prec(a^{\prec n-1})$, $a^{\prec 0}=1_D $, and analogously for $a^{\succ n}$. These new exponential-type maps have inverses with respect to the associative product $\star$, given by
\begin{eqnarray}
\label{InvPrec}
	\E_\prec^{-1}(a)& =& \E_\succ(-a),\\
\label{InvSucc}
	\E_\succ^{-1}(a) &=& \E_\prec(-a).
\end{eqnarray}

We return to the concrete example of shuffle algebra provided by $(\operatorname{Hom}(H,\mathbb{C}), \prec,\succ)$. First, we notice that the three exponential-type series, $\exp^\star$, $\E_\prec$ and $\E_\succ$, are indeed finite sums when evaluated on a word of finite length. It was shown in \cite{EP1} that an interesting connection among these maps is provided by the fact that the set $G$ of characters is bijectively related with the set $\mathfrak{g}$ of infinitesimal characters via these three exponential maps. More precisely,

\begin{theorem}[\cite{EP1}]
For $\Phi$ a character, there exist a unique triple $(\kappa,\beta,\rho)$ of infinitesimal characters such that 
\begin{equation}
\label{key}
	\Phi = \exp^\star(\rho) = \E_\prec(\kappa) = \E_\succ(\beta).
\end{equation}
In particular, we have that $\kappa$ and $\beta$ are the unique solutions of the half-shuffle fixed point equations 
\begin{equation}
\label{leftfpointeq}
	\Phi = \varepsilon + \kappa \prec\Phi
\end{equation}
respectively 
\begin{equation}
\label{rightfpointeq}
	\Phi = \varepsilon + \Phi \succ \beta.
\end{equation} 
Conversely, given $\alpha \in \mathfrak{g}$, then $\exp^\star(\alpha)$, $\mathcal{E}_\prec(\alpha)$ and $\mathcal{E}_\succ(\alpha)$ are characters.
\end{theorem}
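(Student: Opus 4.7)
The plan is to exploit the fact that $H$ is graded connected with finite-dimensional homogeneous components, so that on each graded piece the three exponential series reduce to finite sums. This means all the assertions unpack into finite systems of equations that can be inverted recursively by degree. I would treat $\kappa$ and $\beta$ together via their fixed-point equations, treat $\rho$ separately via the classical convolution logarithm, and handle the converse by reversing the arguments.

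First I would establish existence and uniqueness of $\kappa$ from the left half-shuffle fixed-point equation $\Phi = \varepsilon + \kappa \prec \Phi$. For a word $w = a_1\cdots a_n \in T_+(\A)$, the splitting $\Delta_\prec(w) = w\otimes\mathbf{1} + \sum_{\{1\}\subseteq S\subsetneq[n]} a_S \otimes a_{J_1}|\cdots|a_{J_k}$ turns the equation into
$$\kappa(w) = \Phi(w) - \sum_{\{1\}\subseteq S \subsetneq [n]} \kappa(a_S)\,\Phi(a_{J_1}|\cdots|a_{J_k}),$$
a strict recursion in $\deg(w)$, determining $\kappa$ uniquely on $T_+(\A)$; on genuine concatenations $w_1|\cdots|w_k$ with $k\geq 2$ one declares $\kappa$ to vanish, so as to meet the infinitesimal-character requirement. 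To verify that this extension is consistent with $\Phi = \varepsilon + \kappa\prec\Phi$ on all of $H$ (and hence that $\kappa\in\mathfrak{g}$ genuinely) I would invoke the unshuffle bialgebra axioms, which control exactly how $\Delta_\prec$ interacts with the concatenation product $|$; combined with $\Phi(w_1|w_2) = \Phi(w_1)\Phi(w_2)$ they force the required vanishing on products. A symmetric argument using $\Delta_\succ$ produces the unique $\beta\in\mathfrak{g}$ satisfying $\Phi = \varepsilon + \Phi\succ\beta$.

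Next I would identify $\E_\prec(\kappa) = \Phi$ by checking that $\E_\prec(\kappa)$ itself solves the left fixed-point equation and then appealing to uniqueness. Using $\kappa^{\prec n} = \kappa \prec \kappa^{\prec(n-1)}$ and bilinearity of $\prec$,
$$\E_\prec(\kappa) \;=\; \varepsilon + \sum_{n\geq 1}\kappa^{\prec n} \;=\; \varepsilon + \kappa \prec\Bigl(\varepsilon + \sum_{n\geq 1}\kappa^{\prec(n-1)}\Bigr) \;=\; \varepsilon + \kappa\prec \E_\prec(\kappa),$$
so $\E_\prec(\kappa)=\Phi$, and the symmetric computation gives $\E_\succ(\beta)=\Phi$. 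For the remaining $\rho$, I would set $\rho := \log^\star(\Phi) = \sum_{n\geq 1}\frac{(-1)^{n-1}}{n}(\Phi-\varepsilon)^{\star n}$, well defined on each graded component by connectedness, and invoke the standard graded-connected-Hopf-algebra result that $\log^\star$ sends characters to primitive elements of $H^*$, i.e.\ to infinitesimal characters, giving existence and uniqueness of $\rho$ with $\Phi = \exp^\star(\rho)$.

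For the converse, given $\alpha\in\mathfrak{g}$, the same computation as above shows that $\E_\prec(\alpha)$ and $\E_\succ(\alpha)$ satisfy the corresponding fixed-point equations, and I would prove multiplicativity $\E_\prec(\alpha)(w_1|w_2) = \E_\prec(\alpha)(w_1)\,\E_\prec(\alpha)(w_2)$ by induction on $\deg(w_1|w_2)$, using the shuffle identities \eqref{A1}--\eqref{A3} together with the unshuffle axioms; that $\exp^\star(\alpha)$ is a character is classical. I expect the hard part throughout to be precisely this compatibility step: showing that the recursive/fixed-point definitions actually respect the product structure on $H$. All of that combinatorial content lives in the unshuffle axioms relating $\Delta_\prec$, $\Delta_\succ$ and concatenation $|$, and unpacking them against the multiplicativity of $\Phi$ is where the real work sits.
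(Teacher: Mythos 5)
The paper does not actually prove this theorem --- it is imported verbatim from \cite{EP1} --- so there is no in-text argument to compare against; your proof is correct and is essentially the standard argument of that reference: recursive solvability of the half-shuffle fixed-point equations via the grading and connectedness of $H$, the identity $\E_\prec(\kappa)=\varepsilon+\kappa\prec\E_\prec(\kappa)$, the classical $\exp^\star/\log^\star$ correspondence for graded connected Hopf algebras, and multiplicativity checks on $w_1|w_2$ via the unshuffle axioms together with the vanishing of infinitesimal characters on products. The only point worth making explicit is that when you conclude $\E_\prec(\kappa)=\Phi$ ``by uniqueness,'' the uniqueness you need is that of the solution $X$ of $X=\varepsilon+\kappa\prec X$ for \emph{fixed} $\kappa$ (not of $\kappa$ for fixed $\Phi$); it follows from the same triangular system read in the other direction, since the $S=[n]$ term isolates $\kappa(w)X(\mathbf{1})$ while all remaining terms involve $X$ on elements of strictly lower degree.
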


\par The link with non-commutative probability appears when we lift the linear functional $\varphi$ to the character $\Phi \in G$, such that the $n$-th multivariate moment $\Phi(w):=\varphi_n(a_1, \dots, a_n)$, for $w=a_1\cdots a_n\in T_+(\A)$. It turns out that, for the triple of infinitesimal characters given by the above theorem, $\kappa \in \mathfrak{g}$ evaluated in the word $w = a_1\cdots a_n \in T_+(\mathcal{A})$ can be identified with the free cumulant in the algebra elements $a_1,\ldots,a_n$. In the same way, $\beta \in \mathfrak{g}$ corresponds to Boolean cumulants and $\rho \in \mathfrak{g}$ corresponds to monotone cumulants. More specifically, we have that $\kappa(w)=r_{n}(a_1,a_2, \ldots, a_n)$, $\beta(w) =  b_{n}(a_1,a_2, \ldots, a_n)$ and $\rho(w)=h_{n}(a_1,a_2, \ldots, a_n)$. In particular, these evaluations allow us to obtain the corresponding free, Boolean and monotone moment-cumulant formulas via shuffle algebra
\begin{align}
	\E_\prec(\kappa)(a_1 \cdots a_n)
	&= \sum_{\pi\in \NC(n)} r_\pi(a_1,\ldots,a_n)\\
	\E_\succ(\beta)(a_1 \cdots a_n)
	&= \sum_{\pi\in \mathcal{I}(n)} b_\pi(a_1,\ldots,a_n)\\
	 \exp^\star(\rho)(a_1 \cdots a_n)
	&= \sum_{\pi\in \mathcal{M}(n)} \frac{1}{|\pi|!}h_\pi(a_1,\ldots,a_n).
\end{align}


\medskip

There is a natural action of the group $G$ on its Lie algebra $\mathfrak{g}$, i.e., for $\Psi \in G$ and $\alpha \in \mathfrak{g}$ the adjoint action 
\begin{equation}
\label{adjoint}
	\mathrm{Ad}_\Psi(\alpha):=\Psi^{-1} \star \alpha \star \Psi. 
\end{equation}
Moreover, the shuffle structure permits to define another action, i.e., the shuffle adjoint action
\begin{equation}
\label{shuffleadjoint}
	\theta_\Psi(\alpha):=\Psi^{-1} \succ \alpha \prec \Psi. 
\end{equation}
One verifies that for $\Psi \in G$ and $\alpha \in \mathfrak{g}$, we have that $\theta_\Psi(\alpha) \in \mathfrak{g}$. The shuffle axioms \eqref{A1}-\eqref{A3} yield
$$
	\theta_\Psi \circ \theta_\Phi(\alpha)
	=\Psi^{-1} \succ( \Phi^{-1} \succ \alpha \prec \Phi )\prec \Psi
	= \theta_{\Phi \star \Psi} (\alpha).	
$$
The fixed point equations \eqref{leftfpointeq} and \eqref{rightfpointeq}  imply that $\kappa \prec \Phi = \Phi \succ \beta$ which is equivalent to the fundamental relation between free and Boolean cumulants
\begin{equation}
\label{FreeBoolShuffle}
	\beta = \Phi^{-1} \succ \kappa \prec \Phi = \theta_{\Phi}(\kappa).
\end{equation}
The combinatorial expression of the latter is given by the following result.

\begin{lemma}[\cite{EP4}]
\label{lem:CombAdjoint}
Let $\Phi$ be a character in $G$ and $\alpha,\kappa,\beta$ be infinitesimal characters in $\mathfrak{g}$ such that $\Phi = \E_\prec(\kappa) = \E_\succ(\beta)$ (i.e., $\kappa$ and $\beta$ correspond to the free and Boolean cumulant infinitesimal characters associated to $\Phi$). For a word $w = a_1\cdots a_n$ we have that
\begin{eqnarray}
	\theta_\Phi(\alpha)(w) 
	&=& \sum_{1,n\in S\subseteq [n]} \alpha(a_S) \Phi(a_{J_{[n]}^S})\\ 
\label{CombTheta}
	&=& \sum_{\pi\in \NC_{irr}(n)} \alpha_{|V_1|}(a_{V_1}) 
	\prod_{\substack{W \in \pi\\W\neq V_1}} r_{|W|}(a_W),
\end{eqnarray}
and
\begin{eqnarray}
	\theta_{\Phi^{-1}}(\alpha)(w) 
	&=& \sum_{1,n\in S\subseteq [n]} \alpha(a_S) \Phi^{-1}(a_{J_{[n]}^S})\\ 
\label{CombThetaInv}
	&=& \sum_{\pi\in \NC_{irr}(n)} (-1)^{|\pi|-1}\alpha_{|V_1|}(a_{V_1}) 
	\prod_{\substack{W\in \pi \\W\neq V_1}} b_{|W|}(a_W),
\end{eqnarray}
where $V_1\in \pi\in \NC_{irr}(n)$ denotes the unique outer block with $1,n \in V_1 \subset [n]$.
\end{lemma}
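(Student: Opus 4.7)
The plan is to compute $\theta_\Phi(\alpha)(w) = (\Phi^{-1} \succ \alpha \prec \Phi)(w)$ by directly expanding the half-shuffle coproducts and exploiting that $\alpha$ vanishes on products of words. A cancellation arising from $\Phi^{-1} \star \Phi = \varepsilon$ will force the $\alpha$-block to contain both $1$ and $n$, yielding the first equality; the second equality then follows by expanding $\Phi$ on the gap intervals via $\Phi = \E_\prec(\kappa)$. I expect the main technical hurdle to be the bookkeeping at the reindexing step—verifying that the interior intervals of $[1,\max S]\setminus S$ factor cleanly out of the tail sum, so the latter reduces to $(\Phi^{-1}\star\Phi)$ evaluated on a single word.

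First, using the shuffle identity \eqref{A2} I bracket as $\theta_\Phi(\alpha) = (\Phi^{-1} \succ \alpha) \prec \Phi$. Applying $\Delta_\prec$ to $w = a_1\cdots a_n$ gives
\[
\theta_\Phi(\alpha)(w) = \sum_{1 \in S \subseteq [n]} (\Phi^{-1} \succ \alpha)(a_S)\, \Phi(a_{J_{[n]}^S}).
\]
For the inner factor on $a_S = a_{s_1}\cdots a_{s_k}$, applying $\Delta_\succ$ and using that $\alpha$ vanishes on products of words, only splits in which the $\alpha$-part is an initial segment (in the $S$-indexing) survive:
\[
(\Phi^{-1} \succ \alpha)(a_S) = \sum_{j=1}^k \alpha(a_{s_1}\cdots a_{s_j})\,\Phi^{-1}(a_{s_{j+1}}\cdots a_{s_k}).
\]
Reindexing by the $\alpha$-subset $S' := \{s_1,\ldots,s_j\}$ and $T := \{s_{j+1},\ldots,s_k\}$, and setting $M := \max S'$ and $U_M := \{M+1,\ldots,n\}$, the inner sum over $T \subseteq U_M$ equals $(\Phi^{-1} \star \Phi)(a_{U_M}) = \varepsilon(a_{U_M})$, which vanishes unless $U_M = \emptyset$, i.e., $n \in S'$. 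This produces \eqref{CombTheta}.

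For the second equality, I expand each $\Phi(a_{J_i})$ via $\Phi = \E_\prec(\kappa)$ and the free moment-cumulant formula, producing sums over non-crossing partitions $\pi_i \in \NC(J_i)$ weighted by $r_{\pi_i}(a_{J_i})$; adjoining $S$ as the unique outer block assembles the $\pi_i$'s into a non-crossing irreducible partition of $[n]$, giving the second line of \eqref{CombTheta}. For $\theta_{\Phi^{-1}}(\alpha)$, the identical argument with $\Phi \leftrightarrow \Phi^{-1}$ (using $\Phi \star \Phi^{-1} = \varepsilon$) produces the analogue of the first equality; expanding $\Phi^{-1}(a_{J_i})$ via \eqref{InvSucc} as $\E_\prec(-\beta)(a_{J_i}) = \sum_{\pi_i \in \NC(J_i)}(-1)^{|\pi_i|}b_{\pi_i}(a_{J_i})$ and multiplying signs over the gap intervals then yields the global sign $(-1)^{|\pi|-1}$ together with the Boolean cumulants $b_{|W|}$, proving \eqref{CombThetaInv}.
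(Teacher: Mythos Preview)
The paper does not prove this lemma; it is quoted from \cite{EP4} and used without argument. Your proof is correct and is essentially the direct computation one would expect in the shuffle framework of \cite{EP1,EP2,EP4}: expand $(\Phi^{-1}\succ\alpha)\prec\Phi$ via the half-coproducts, use that $\alpha$ kills bar-products to reduce $(\Phi^{-1}\succ\alpha)(a_S)$ to a sum over initial segments of $S$, and then collapse the tail sum with $\Phi^{-1}\star\Phi=\varepsilon$.

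The reindexing step you flagged works exactly as stated. With $S=S'\sqcup T$, $M=\max S'$, and $U_M=\{M+1,\ldots,n\}$, the point is that $M\in S$ separates the interval components of $[n]\setminus S$ into those contained in $[1,M-1]$ (depending only on $S'$) and those contained in $U_M$ (depending only on $T$). Because $\Phi$ is multiplicative on bar-products, the factor coming from the interior intervals pulls out of the $T$-sum, and the remaining sum $\sum_{T\subseteq U_M}\Phi^{-1}(a_T)\,\Phi(a_{K_1}|\cdots|a_{K_q})$ is literally $(\Phi^{-1}\star\Phi)(a_{U_M})=\varepsilon(a_{U_M})$, forcing $M=n$. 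Your treatment of \eqref{CombThetaInv} via $\Phi^{-1}=\E_\prec(-\beta)$ and the resulting sign $(-1)^{\sum_i|\pi_i|}=(-1)^{|\pi|-1}$ is likewise correct.
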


In particular, if we take $\alpha = \kappa$ in Equation \eqref{CombTheta}, we obtain the relation between free and Boolean cumulants \eqref{BooFree}. On the other hand, taking $\alpha = \beta$ in Equation \eqref{CombThetaInv}, we obtain the converse relation \eqref{FreeBoo} expressing Boolean cumulants in terms of free cumulants.
 

Based on the shuffle adjoint action, we can define the following right action of $G$ on itself. Let $\Psi_i=\mathcal{E}_\prec(\kappa_i) \in G$, $\kappa_i \in \mathfrak{g}$, $i=1,2,3$
\begin{equation}
\label{shuffleadjointgroup}
	\Psi_1\ \bole \Psi_2:=\mathcal{E}_\prec(\theta_{\Psi_2}(\kappa_1)).
\end{equation}
Observe that the shuffle axioms \eqref{A1}-\eqref{A3} imply that 
\begin{align*}
	(\Psi_1\ \bole \Psi_2)\ \bole \Psi_3 
	&= \mathcal{E}_\prec(\theta_{\Psi_3}\circ\theta_{\Psi_2}(\kappa_1))\\
	&=\mathcal{E}_\prec(\theta_{\Psi_2 \star \Psi_3}(\kappa_1))\\
	&=\Psi_1\ \bole (\Psi_2 \star \Psi_3). 
\end{align*}
It can be shown that the $\bole$-action is the subordination operation, defined by Lenczewski in \cite{Len} to describe the decomposition of the free additive convolution. Indeed, we have that free additive convolution of $\Psi_i=\mathcal{E}_\prec(\kappa_i) \in G$, $\kappa_i \in \mathfrak{g}$, $i=1,2$, is given by 
\begin{equation}
\label{addconv}
	\Psi_1 \boxplus \Psi_2 =\mathcal{E}_\prec(\kappa_1 + \kappa_2) = \Psi_1 \star (\Psi_2\ \bole \Psi_1).
\end{equation}
Similarly for the Boolean case, i.e., for $\Psi_i=\mathcal{E}_\succ(\beta_i) \in G$, $\beta_i \in \mathfrak{g}$, $i=1,2$, we can define a left action of $G$ on itself
\begin{equation}
\label{leftshuffleadjointgroupaction}
	\Psi_1\ \bori \Psi_2 
	:= \mathcal{E}_\succ(\theta_{\Psi_1^{-1}}(\beta_2)),
\end{equation}
such that 
\begin{equation}
\label{addbooleanconv}
	\Psi_1 \uplus \Psi_2 
	= \mathcal{E}_\succ(\beta_1 + \beta_2) 
	= (\Psi_2\ \bori \Psi_1) \star \Psi_2.
\end{equation}

Note that the second equality in \eqref{addconv} (and also in \eqref{addbooleanconv}) is defined using \eqref{key}. It involves a rather non-trivial relation between the shuffle adjoint action and the Baker--Campbell--Hausdorff formula \cite{EP4}. In fact, identity \eqref{key} permits to write $\mathcal{E}_\prec(\kappa)=\exp^\star(\Omega'(\kappa))$ and $\mathcal{E}_\succ(\beta)=\exp^\star(-\Omega'(-\beta))$, where $\Omega' \colon \mathfrak{g} \to \mathfrak{g}$ is the \textit{pre-Lie Magnus expansion}, a highly non-linear transformation on the Lie algebra $\mathfrak{g}$ of infinitesimal characters 
$$
	\Omega'(\kappa):=\sum_{n \ge 0}\frac{B_n}{n!} L^{(n)}_{\Omega'(\kappa) \rhd}(\kappa)=\kappa -\frac{1}{2} \kappa \rhd \kappa + 
	\frac{1}{4} (\kappa \rhd \kappa)\rhd \kappa + \frac{1}{12} \kappa \rhd (\kappa \rhd \kappa)  + \cdots.
$$
Here the $B_n$ are the Bernoulli numbers and the product 
$$
	\alpha \rhd \beta := \alpha \succ \beta - \beta \prec \alpha
$$
satisfies the left pre-Lie relation 
\begin{equation}
\label{leftpreLie}
	(\alpha \rhd \beta) \rhd \gamma - \alpha \rhd (\beta \rhd \gamma) 
	= (\beta \rhd \alpha) \rhd \gamma - \beta \rhd (\alpha \rhd \gamma),     
\end{equation}
where $L^{(n)}_{\alpha \rhd} (\beta):=\alpha \rhd (L^{(n-1)}_{\alpha \rhd}(\beta))$, $L^{(0)}_{\alpha \rhd} (\beta)=\beta$. Note that a pre-Lie algebra is Lie admissible, i.e.
$$
	\alpha \rhd \beta - \beta \rhd \alpha =[\alpha,\beta] = \alpha \star \beta - \beta \star \alpha.
$$ 
Hence, the Lie algebra $\mathfrak{g}$ of infinitesimal characters is a pre-Lie algebra. The compositional inverse of $\Omega'$ is given by
$$
	W(\kappa)
	:= \frac{e^{L_{\kappa \rhd}} - 1}{L_{\kappa}}(\kappa)
	= \kappa + \sum_{n>0} \frac{1}{(n+1)!}L^{(n)}_{\kappa \rhd}(\kappa).
$$

The map $\Omega'$ permits to relate Boolean and free cumulants with monotone cumulants, as
\begin{equation}
\label{PreLieMagnus0}
	\exp^\star(\Omega'(\kappa))=\exp^\star(-\Omega'(-\beta))=\exp^\star(\rho).
\end{equation}
See \cite{EP2,EP4} for more details and the recent work \cite{Magnus2021} for explicit formulas deduced from \eqref{PreLieMagnus0}, expressing monotone in terms of Boolean and free cumulants.



\section{Free, Boolean and monotone infinitesimal cumulants}
\label{sec:InfCumulants}
{\color{black}
In this section, we review the notion of infinitesimal free cumulants (following \cite{FN}) and the analogue notions of infinitesimal Boolean and monotone cumulants (\cite{Has})}. To this end, the intuitive idea is to look at the definitions that we already have in non-commutative probability and formally differentiate the formulas to get a natural definition for the infinitesimal cumulants. We will also check how these definitions are understood when we work with the algebra of Grassmann numbers and $\cc$-linear maps, and how they lead to equivalent definitions.


\subsection{Infinitesimal non-commutative probability spaces}
\label{ssec:incps}

We begin by extending our notion of ncps to include another linear functional.

\begin{definition}
\label{def:incps}
An \textit{infinitesimal non-commutative probability space} (incps) is a triple $(\A,\varphi,\varphi^\prime)$, where $(\A,\varphi)$ is a ncps and $\varphi^\prime : \A\to\mathbb{C}$ is a linear functional satisfying $\varphi^\prime(1_\A)=0$.
\end{definition}

The \textit{$n$-th multivariate infinitesimal moment} is defined to be the multilinear functional $\varphi^\prime_n: \A^n \to \mathbb{C}$ satisfying $\varphi^\prime_n(a_1, \ldots, a_n):=\varphi^\prime(a_1 \cdot_{\!\scriptscriptstyle{\A}} \cdots \cdot_{\!\scriptscriptstyle{\A}} a_n) \in \mathbb{C}$, for $a_1, \ldots, a_n \in \A$.  Intuitively, we want to think of these infinitesimal moments, $\varphi^\prime_n$, as being the formal differentiations of the usual moments $\varphi_n$. Thus, formal differentiation of products of moments of the form $\varphi_\pi(a_1,\ldots,a_n)=\prod_{W\in \pi} \varphi_{|W|}(a_W)$ with $\pi\in\PP(n)$ should be the result of applying the Leibniz rule:
$$
	\sum_{V \in \pi} \varphi^\prime_{|V|}(a_V) \prod_{\substack{W \in \pi\\ W \neq V}} \varphi_{|W|}(a_W).
$$
Since this kind of expression will appear constantly, let us fix the notation (we follow \cite{Min}). 

\begin{notation}
\label{partial.notation}
Given $\pi \in \PP(n)$ and a sequence of pairs $(f_n, f^\prime_n)$ of multilinear maps $f_n, f^\prime_n: \AA^{n}\rightarrow \cc$ we denote
$$
	\partial f_\pi(a_1, \dots, a_n):= \sum_{V \in \pi} f^\prime_{|V|}(a_V) \prod_{\substack{W \in \pi\\ W \neq V}} f_{|W|}(a_W).
$$
\end{notation}

A nice way to formalize our previous intuitive considerations is by using the notion of algebra of Grassmann numbers. As mastered in \cite{FN}, this captures the essence of incps in a way that resembles ncps, making it much easier to handle these objects. The algebra of Grassmann numbers $\mathbb{G} = \{z+\hbar w\,:\,z,w\in\mathbb{C}\},$ is defined as a 2-dimensional vector space over $\mathbb{C}$ with commutative multiplication given by 
$$
	( z_1 + \hbar w_1) \cdot ( z_2 + \hbar w_2 ) =z_1 z_2 + \hbar ( z_1 w_2 + w_1 z_2) \qquad \forall z_1,w_1,z_2,w_2\in\cc,
$$
where we formally have that $\hbar^2=0$.

Now, the key idea in \cite{FN} is to keep track of the two functionals, $\varphi$ and $\varphi^\prime$, and merge them into one single $\mathbb{G}$-valued map 
$$
	\tilde{\varphi}:=\varphi+\hbar\varphi^\prime.
$$ 
We can equivalently think of infinitesimal non-commutative probability spaces as pairs $(\AA,\tilde{\varphi})$ where $\AA$ is a unital algebra over $\cc$ and $\tilde{\varphi}: \AA \to \ggg$ is a $\cc$-linear map with $\tilde{\varphi} (1_{\AA}) = 1$. In general, we are going to use this idea of merging whenever we have pairs of functionals, $f_n$, $f^\prime_n$.

\begin{notation}
\label{tilde.notation}
Given two linear functionals $f_n,f^\prime_n:\AA^n\to\cc$, we will denote by $\tilde{f_n}$ the $\cc$-linear map $\tilde{f}_n:\AA^n\to \ggg$ such that 
$$
	\tilde{f}_n(a_1, \dots, a_n):=f_n(a_1, \dots, a_n)+\hbar f^\prime_n(a_1, \dots, a_n).
$$
As before, we will use the convention
\begin{equation}
	\tilde{f}_\pi(a_1, \dots, a_n):= \sideset{}{^\ggg}\prod_{V\in\pi} \tilde{f}_{|V|}(a_V).
\end{equation}
As indicated, the product is in the algebra $\ggg$.
\end{notation}

\begin{remark}
A straightforward computation shows that $\tilde{f}_\pi = f_\pi +\hbar  \partial f_\pi$.
\end{remark}


\subsection{Infinitesimal free cumulants}
\label{ssec:inffree}

With both the notion of incps and the notation in place, we may consider cumulants. The notion of infinitesimal free cumulants was introduced in \cite{FN}. 

\begin{definition}
\cite{FN}\label{def:inffreecumul}
	Let $(\A,\varphi,\varphi^\prime)$ be an incps. The \textit{infinitesimal free cumulants with respect to $(\A,\varphi,\varphi^\prime)$} form the family of multilinear functionals $\{r_n^\prime:\A^{n}\to \mathbb{C}\}_{n\geq1}$ recursively defined by:
\begin{equation}
\label{InfFreeMomCum}
	\varphi_n^\prime(a_1,\ldots,a_n) 
	= \sum_{\pi\in \NC(n)} \partial r_\pi(a_1,\ldots,a_n) 
	\qquad \forall n\geq 1,\ a_1,\ldots,a_n\in \AA.
\end{equation}  
\end{definition}

Notice that this formula may be considered as a formal derivation of the free moment-cumulant formula \eqref{FreeMomCum}. Observe that for a fixed $n$, when $\pi=1_n$ we have that $\partial r_{1_n}(a_1, \dots, a_n)=r^\prime_n(a_1, \dots, a_n)$, and this is the only place where the term $r^\prime_n$ appears on the right-hand side of \eqref{InfFreeMomCum} (if $\pi\in \PP(n)$ and $\pi\neq 1_n$, then each block $V\in\pi$ satisfies that $|V|<n$). This means that the infinitesimal cumulants are well defined, since we can express the (unique) $r^\prime_n$ in terms of the previous cumulants $\{r_k^\prime:\A^{k}\to \mathbb{C}\}_{k=1}^{n-1}$ and the infinitesimal moments $\{\varphi_k^\prime:\A^{k}\to \mathbb{C}\}_{k=1}^n$. 

\begin{remark}
Now we can merge our free cumulants and infinitesimal free cumulants into the \textit{$\mathbb{G}$-valued free cumulants} $\{\tilde{r}_n\}_{n\geq 1}$ (see Notation \ref{tilde.notation}) which are $\cc$-linear functions from $\AA^n$ to $\ggg$. Observe that from definitions \eqref{InfFreeMomCum} and \eqref{FreeMomCum}, we directly get the following formula relating $\tilde{\varphi}_n$ with $\tilde{r}_n$:
\begin{equation}
\label{GrassFreeMomCum}
	\tilde{\varphi}_n(a_1, \dots, a_n)=\sum_{\sigma \in \NC(n)} \tilde{r}_\sigma(a_1, \dots, a_n),
\end{equation}
Thus, we could have also defined this function first, and then deduce the definition of infinitesimal cumulants by using the relation that appears when we just focus on the order-$\hbar$ coefficient. On the other hand, we can use M\"obius inversion in the lattice of non-crossing partitions to invert \eqref{GrassFreeMomCum} and thus obtain a formula that expresses $\tilde{r}$ in terms of $\tilde{\varphi}$:

\begin{equation}
\label{GrassFreeCumMom}
	\tilde{r}_n(a_1, \dots, a_n) = \sum_{\pi \in \NC(n)} \text{M\"ob}(\pi, 1_n) \tilde{\varphi}_\pi(a_1, \dots, a_n).
\end{equation}
Hence, if we focus on the order-$\hbar$ coefficient, we arrive at the following explicit description (which may as well serve as the definition) of the infinitesimal free cumulants in terms of the functionals $\varphi$ and $\varphi^\prime$:
\begin{equation}
\label{imcf1}
	r_n^\prime(a_1,\ldots,a_n) = \sum_{\pi\in \NC(n)} \text{M\"ob}(\pi,1_n) \partial\varphi_\pi(a_1,\ldots,a_n).
\end{equation} 
\end{remark}

{
\begin{remark}\label{rmk:ingGenSer}
A natural question is if it is possible to describe the multivariate infinitesimal free moment-cumulant relations in terms of generating series. Recall that for the free moment-cumulant relation we already have Equation \eqref{eq.mom.cum.gen.series.rel}. So, intuitively, a candidate relation could be its formal derivative, namely
\begin{equation}
M'_a(z_1, \dots, z_k) = \partial \left( R_a \Big(z_1(1+M_a(z_1, \dots, z_k)), \dots, z_k(1+M_a(z_1, \dots, z_k))\Big) \right),
\end{equation}
where on the left-hand side the symbol $\partial$ accounts for some sort of partial derivative on formal power series with complex coefficients in non-commuting indeterminates. The problem with this approach is that it requires a non-commutative chain rule. The basic ways of making sense of combining non-commutative differentiation and a chain rule do not work straightforwardly in this case, namely, they do not produce a valid formula.  In general, expressing $M'_a$ in terms of $M_a$, $R_a$ and $R'_a$  appears to be a rather intricate problem of which we do not know the solution. However, the shuffle algebra approach provides an equivalent result. The reader is referred to Equation \eqref{freeShuffleEq} and its shuffle algebra solution \eqref{inffreerel} in Subsection \ref{ssec:FreeBoolShuff} below. 
\end{remark}
}



\subsection{Infinitesimal Boolean cumulants}
\label{ssec:infboolean}

We follow the ideas of the foregoing subsection to define infinitesimal Boolean cumulants.

\begin{definition}
	Let $(\A,\varphi,\varphi^\prime)$ be an incps and let $\{ b_n\colon \A^{n}\to \mathbb{C}\}_{n \ge 1}$ be the corresponding Boolean cumulant functionals. The \textit{infinitesimal Boolean cumulants} are the family of multilinear functionals $\{b_n^\prime \colon \A^n\to \mathbb{C} \}_{n \ge 1}$ recursively defined by the infinitesimal moment--cumulant formula:
\begin{equation}
\label{InfBooMomCum}
	\varphi^\prime_n(a_1, \dots, a_n)
	= \sum_{\pi \in \II(n)} \partial  b_\pi(a_1, \dots, a_n) 
	\qquad \forall n\geq 1,\ a_1,\ldots,a_n\in \AA.
\end{equation}
\end{definition}

{
\begin{remark}
\label{remHas}
The same objects where defined by Takahiro Hasebe in the work \cite{Has} under the name of differential cumulants, together with higher order differential cumulants. The approach in the paper consists in considering formal power series valued linear mappings $\varphi^t:\A\to \mathbb{C}[[t]]$ and to define the notion of differential independence according to the usual rules for natural independence in the context of power series. Since in this paper we restrict our attention to only first order differential cumulants, we opted for a simplified presentation of this notions rather than the one used in \cite{Has}. We also remark that the notion of infinitesimal Boolean independence is defined in \cite{Has} and it is shown that it is equivalent to the vanishing of the mixed Boolean cumulants and mixed infinitesimal Boolean cumulants (see Appendix \ref{ssec:independences}).
\end{remark}
}

\begin{remark}
As before, we may consider $\tilde{b}_n: \AA^n\to\ggg$. Then, equations \eqref{InfBooMomCum} and \eqref{BooMomCum} directly imply the following formula relating $\tilde{\varphi}_n$ with $\tilde{b}_n$:
	\begin{equation}
    \label{GrassBooMomCum}
    \tilde{\varphi}_n(a_1, \dots, a_n)=\sum_{\sigma \in \II(n)} \tilde{b}_\sigma(a_1, \dots, a_n).
    \end{equation}
As expected, by inverting \eqref{GrassBooMomCum} in the lattice of interval partitions, we can express $\tilde{b}_n$ in terms of $\tilde{\varphi}$:

\begin{equation}
\label{GrassBooCumMom}
	\tilde{b}_n(a_1, \dots, a_n) = \sum_{\pi \in \II(n)} (-1)^{|\pi|-1} \tilde{\varphi}_\pi(a_1, \dots, a_n).
\end{equation}
\end{remark}
Finally, if we just focus on the order-$\hbar$ coefficient we get an equivalent definition of infinitesimal Boolean cumulants. This formula has the advantage of providing an explicit description of the infinitesimal Boolean cumulants in terms of the functionals $\varphi$ and $\varphi^\prime$:

\begin{equation}
\label{InfBooCumMom}
	b^\prime_n(a_1, \dots, a_n) := \sum_{\pi \in \II(n)} (-1)^{|\pi|-1} \partial\varphi_\pi(a_1, \dots, a_n).
\end{equation}
{{
The particularly simple relation \eqref{boolean.gen.series.rel} between the multivariate moment and Boolean cumulant generating series permits its extension to the infinitesimal setting as it involves only the non-commutative product rule. 
For a tuple $a = (a_1,\ldots,a_k)$ of elements in $\A$, we introduce the \textit{multivariate infinitesimal moment series} of $a$:
$$
	M^\prime_{a}(z_1, \dots, z_k)
	=\sum_{n=1}^\infty \sum_{i_1, \dots, i_n=1}^k \varphi^\prime_n(a_{i_1},\dots, a_{i_n}) z_{i_1} \cdots z_{i_n},
$$
where the $\{\varphi_n^\prime\}_{n\geq1}$ are the infinitesimal moments, and the \textit{multivariate $\eta^\prime$-series} of $a$ is 
$$
	B^\prime_{a}(z_1, \dots, z_k)
	=\sum_{n=1}^\infty \sum_{i_1, \dots, i_n=1}^k b^\prime_n(a_{i_1},\dots, a_{i_n}) z_{i_1} \cdots z_{i_n},
$$
where the $\{b_n^\prime\}_{n\geq1}$ are the infinitesimal Boolean cumulants of $\tilde{\varphi}$.}

\begin{theorem}
\label{thm:infBoolGenSer}
Let $(\A,\varphi,\varphi')$ be an incps and consider $a = (a_1,\ldots, a_k) \in \A^k$. Then we have the following relation:
\begin{equation}
\label{InfBooCumMomGenSer}
	M^\prime_{a}(z_1, \dots, z_k) 
	= (1+M_{a}(z_1, \dots, z_k))B_{a}^\prime(z_1, \dots, z_k) (1+M_a(z_1, \dots, z_k)).
\end{equation}
\end{theorem}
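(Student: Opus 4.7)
The plan is to lift the non-infinitesimal Boolean generating-series identity $M_\varphi = (1+M_\varphi)B_\varphi$ to the $\ggg$-valued setting and then read off the $\hbar$-coefficient. First, I would introduce the $\ggg$-valued generating series
$$
\tilde M_{\tilde\varphi}(z_1,\ldots,z_k) := M_\varphi + \hbar\, M'_{\tilde\varphi}, \qquad \tilde B_{\tilde\varphi}(z_1,\ldots,z_k) := B_\varphi + \hbar\, B'_{\tilde\varphi},
$$
viewed as elements of the non-commutative power series ring $\ggg\langle\langle z_1,\ldots,z_k\rangle\rangle$. The Grassmann-valued moment--cumulant identity \eqref{GrassBooMomCum} reads $\tilde\varphi_n(a_1,\ldots,a_n) = \sum_{\pi\in \II(n)} \tilde b_\pi(a_1,\ldots,a_n)$ in $\ggg$, and the classical argument that splits each interval partition at its last block (with the convention $\tilde\varphi_0 := 1$) gives the recursion $\tilde\varphi_n(a_1,\ldots,a_n) = \sum_{k=1}^n \tilde\varphi_{n-k}(a_1,\ldots,a_{n-k}) \tilde b_k(a_{n-k+1},\ldots,a_n)$. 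Substituting $a_j = X_{i_j}$, multiplying by $z_{i_1}\cdots z_{i_n}$, and summing over $n$ and the indices yields the $\ggg$-valued identity
$$
\tilde M_{\tilde\varphi} = (1+\tilde M_{\tilde\varphi})\, \tilde B_{\tilde\varphi},
$$
which rearranges to $1+\tilde M_{\tilde\varphi} = (1-\tilde B_{\tilde\varphi})^{-1}$ exactly as in the scalar case.

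Second, I would compute the inverse $(1 - B_\varphi - \hbar B'_{\tilde\varphi})^{-1}$ using an ansatz $X_0 + \hbar X_1$ and matching coefficients of $1$ and $\hbar$. Using $\hbar^2=0$, the coefficient of $1$ gives $X_0 = (1-B_\varphi)^{-1}$ and the coefficient of $\hbar$ gives $X_1 = (1-B_\varphi)^{-1} B'_{\tilde\varphi} (1-B_\varphi)^{-1}$. Invoking the scalar identity $(1-B_\varphi)^{-1} = 1 + M_\varphi$ then produces
$$
1+\tilde M_{\tilde\varphi} = (1+M_\varphi) + \hbar\, (1+M_\varphi)\, B'_{\tilde\varphi}\, (1+M_\varphi),
$$
whose $\hbar$-coefficient is precisely the desired identity \eqref{InfBooCumMomGenSer}.

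The step requiring the most care is the derivation of the $\ggg$-valued series identity $\tilde M_{\tilde\varphi} = (1+\tilde M_{\tilde\varphi})\, \tilde B_{\tilde\varphi}$ from the coefficient-wise relation \eqref{GrassBooMomCum}. Because $\ggg$ is a commutative $\cc$-algebra and the moment--cumulant formula holds termwise, the last-block telescoping carries over verbatim from the scalar case, so this step is essentially notational. The remaining $\hbar^2=0$ manipulation is elementary Grassmann bookkeeping and should not pose any real obstacle.
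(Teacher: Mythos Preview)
Your proof is correct, but it takes a different route from the paper's. The paper argues by direct coefficient comparison: it introduces the set $\II^3_\emptyset(n)$ of ordered decompositions of $[n]$ into three consecutive intervals $(S_1,S_2,S_3)$ with $S_1,S_3$ possibly empty, and shows that the coefficient of $z_{i_1}\cdots z_{i_n}$ on both sides of \eqref{InfBooCumMomGenSer} equals
\[
\sum_{(S_1,S_2,S_3)\in \II^3_\emptyset(n)} \Big(\sum_{\pi_1\in\II(S_1)} b_{\pi_1}(X_{S_1})\Big)\, b'_{|S_2|}(X_{S_2})\, \Big(\sum_{\pi_3\in\II(S_3)} b_{\pi_3}(X_{S_3})\Big),
\]
where on the left one uses \eqref{InfBooMomCum} and identifies the marked block $V$ with $S_2$, while on the right one simply multiplies out $(1+M_\varphi)\,B'_{\tilde\varphi}\,(1+M_\varphi)$ and invokes \eqref{BooMomCum} for each outer factor.

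Your approach instead lifts the scalar identity $M_\varphi=(1+M_\varphi)B_\varphi$ to $\ggg\langle\langle z_1,\dots,z_k\rangle\rangle$ via \eqref{GrassBooMomCum}, inverts $1-\tilde B_{\tilde\varphi}$ using $\hbar^2=0$, and reads off the $\hbar$-part. This is arguably more in keeping with the paper's own Grassmann-algebra philosophy (and is essentially how the paper later derives Corollary~\ref{cor:infBoolGenSer} from the theorem); it replaces the explicit three-interval bookkeeping by a one-line formal inversion. The paper's argument, on the other hand, is self-contained at the level of $\cc$-valued coefficients and makes the combinatorial meaning of each term visible. Both are short; yours is slicker, the paper's is more transparent about where the factor $(1+M_\varphi)$ on each side comes from.
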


\begin{proof}
Note that  \eqref{InfBooCumMomGenSer} can be shown directly. Let us denote by $\II^3_\emptyset (n)$ the set of partitions of $[n]$ into three interval blocks $(S_1,S_2,S_3)$, where we allow $S_1$ and $S_3$ to be the empty set. 
Then we can see that on both sides of the equation, the coefficient of the monomial $z_{i_1} \cdots z_{i_n}$ is equal to 
$$
	\sum_{(S_1,S_2,S_3)\in \II^3_\emptyset(n)} \pai \sum_{ \pi_1 \in \II(S_1)} b_{\pi_1}(a_{S_1})\pad  
	b^\prime_{|S_2|}(a_{S_2}) \pai \sum_{ \pi_3 \in \II(S_3)} b_{\pi_3}(a_{S_3})\pad.
$$
To see that this is the coefficient on the left-hand side, we use the infinitesimal Boolean moment-cumulant formula \eqref{InfBooMomCum} and observe that the term $b_{\pi_1}(a_{S_1}) b^\prime_{|S_2|}(a_{S_2}) b_{\pi_3}(a_{S_3})$ comes from the partition $\pi=\pi_1\cup\{S_2\}\cup \pi_3 \in \II(n)$ with special block $V=S_2$ (that gets the infinitesimal cumulant).
The fact that this sum is the coefficient on the right-hand side follows from multiplying the three series and using the Boolean moment-cumulant formula \eqref{BooMomCum}. 
\end{proof}

The generating series \eqref{InfBooCumMomGenSer} of Theorem \ref{thm:infBoolGenSer} is equivalent to a recursive relation involving the generating series $M_a(z_1, \dots, z_k)$, $M^\prime_a(z_1, \dots, z_k)$, $B_a(z_1, \dots, z_k)$, and $B_a^\prime(z_1, \dots, z_k)$.

\begin{corollary}
\label{cor:infBoolGenSer} 
Let $(\A,\varphi,\varphi')$ be an incps and consider $a = (a_1,\ldots, a_k) \in \A^k$. Then we have that:
\begin{equation}
\label{InfBooCumMomGenSerBIS}
	M^\prime_{a}(z_1, \dots, z_k)= M^\prime_{a}(z_1, \dots, z_k)B_{a}(z_1, \dots, z_k) 
		+ (1+M_{a}(z_1, \dots, z_k))B^\prime_{a}(z_1, \dots, z_k).
\end{equation}
\end{corollary}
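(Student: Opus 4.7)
The approach is to derive Equation \eqref{InfBooCumMomGenSerBIS} as a purely algebraic consequence of Theorem \ref{thm:infBoolGenSer} together with the multivariate Boolean moment--cumulant identity $M_\varphi = (1+M_\varphi)B_\varphi$ already recalled in the section. Since we are working in the non-commutative power series algebra $\cc\langle\langle z_1,\ldots,z_k\rangle\rangle$, associativity is available but commutativity is not, so I would track the order of factors throughout; in particular, the $B_\varphi$ that appears at the end of the computation must stay on the right.

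First I would distribute the rightmost factor in \eqref{InfBooCumMomGenSer}, writing $(1+M_\varphi) = 1 + M_\varphi$ on the right, to obtain
\begin{align*}
M^\prime_{\tilde\varphi}
&= (1+M_\varphi)\,B^\prime_{\tilde\varphi}\,(1+M_\varphi) \\
&= (1+M_\varphi)\,B^\prime_{\tilde\varphi} + (1+M_\varphi)\,B^\prime_{\tilde\varphi}\,M_\varphi.
\end{align*}
Next I would substitute the Boolean relation $M_\varphi = (1+M_\varphi)B_\varphi$ into the trailing $M_\varphi$ of the second term, using only associativity of the power series product:
$$(1+M_\varphi)\,B^\prime_{\tilde\varphi}\,M_\varphi = (1+M_\varphi)\,B^\prime_{\tilde\varphi}\,(1+M_\varphi)\,B_\varphi = M^\prime_{\tilde\varphi}\,B_\varphi,$$
where the last equality is Equation \eqref{InfBooCumMomGenSer} read right-to-left. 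Collecting the two pieces yields
$$M^\prime_{\tilde\varphi} = (1+M_\varphi)\,B^\prime_{\tilde\varphi} + M^\prime_{\tilde\varphi}\,B_\varphi,$$
which, interpreting $M^\prime_\varphi$ and $M^\prime_{\tilde\varphi}$ as notations for the same infinitesimal moment series, is exactly \eqref{InfBooCumMomGenSerBIS}.

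There is essentially no serious obstacle here: the whole argument reduces to a two- or three-line manipulation once Theorem \ref{thm:infBoolGenSer} is in hand. The one point requiring care is the non-commutativity of the power series ring; one might be tempted to cancel the invertible factor $(1+M_\varphi)$ on the left of \eqref{InfBooCumMomGenSer}, but $B^\prime_{\tilde\varphi}$ need not commute with $(1+M_\varphi)$, so the two copies of $(1+M_\varphi)$ appearing on the right of \eqref{InfBooCumMomGenSer} play asymmetric roles. The identity $M_\varphi = (1+M_\varphi)B_\varphi$ is precisely what lets us convert the trailing $(1+M_\varphi)$ into a $B_\varphi$ tail, producing the two summands on the right-hand side of \eqref{InfBooCumMomGenSerBIS}.
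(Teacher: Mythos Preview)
Your proof is correct and follows essentially the same route as the paper: both arguments combine Theorem~\ref{thm:infBoolGenSer} with the Boolean relation $M_\varphi=(1+M_\varphi)B_\varphi$ to reduce the right-hand $(1+M_\varphi)$ factor. The paper right-multiplies \eqref{InfBooCumMomGenSer} by $(1+M_\varphi)^{-1}=1-B_\varphi$ and then expands, whereas you expand first and substitute $M_\varphi=(1+M_\varphi)B_\varphi$ into the trailing term; these are equivalent two-line manipulations.
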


\begin{proof}
Using $M_{a}(z_1, \dots, z_k)= (1+M_{a}(z_1, \dots, z_k))B_{a}(z_1, \dots, z_k)$, relation \eqref{InfBooCumMomGenSer} can be written as 
$$
	M^\prime_{a}(z_1, \dots, z_k)(1+M_{a}(z_1, \dots, z_k))^{-1} 
	= (1+M_{a}(z_1, \dots, z_k))B^\prime_{a}(z_1, \dots, z_k).
$$ 
A simple computation yields
$$
	M^\prime_{a}(z_1, \dots, z_k)(1 - B_{a}(z_1, \dots, z_k)) 
	= (1+M_{a}(z_1, \dots, z_k))B^\prime_{a}(z_1, \dots, z_k),
$$
which implies relation \eqref{InfBooCumMomGenSerBIS}. 
\end{proof}
{{
We remark that at the level of comparing coefficients, relation \eqref{InfBooCumMomGenSerBIS} yields the following 
\begin{eqnarray*}
\varphi^\prime_n(a_{i_1},\dots, a_{i_n}) &=& b^\prime_n(a_{i_1},\dots, a_{i_n}) 
		+ \sum_{s=1}^{n-1} \varphi'_s(a_{i_1},\ldots,a_{i_s})b_{n-s}(a_{i_{s+1}},\ldots,a_{i_n})
		\\ & &+ \sum_{s=1}^{n-1} \varphi_s(a_{i_1},\ldots,a_{i_s})b_{n-s}^\prime(a_{i_{s+1}},\ldots,a_{i_n}).
\end{eqnarray*}
Observe that this formula can be used to recursively compute the infinitesimal moments.}


\subsection{Infinitesimal monotone cumulants}
\label{ssec:infmonotone}

The same ideas apply when defining infinitesimal monotone cumulants.

\begin{definition}
	Let $(\A,\varphi,\varphi^\prime)$ be an incps, let $\{h_n\colon \A^{n}\to \mathbb{C}\}_{n \ge 1}$ be the corresponding monotone cumulant functionals. The \textit{infinitesimal monotone cumulants} are the family of multilinear functionals $\{h_n^\prime \colon \A^n\to \mathbb{C} \}_{n \ge 1}$ recursively defined by the following formula: 
\begin{equation}
\label{InfMonMomCum}
	\varphi^\prime_n(a_1, \dots, a_n) 
	= \sum_{(\pi,\lambda)\in \MM(n)} \frac{1}{|\pi|!} \partial h_\pi(a_1, \dots, a_n) 
	\qquad \forall n\geq 1,\ a_1,\ldots, a_n\in \AA.
	\end{equation}	
\end{definition}

{
\begin{remark}
Analogous to the Boolean case, infinitesimal monotone cumulants already appeared in \cite{Has} under the name of first order differential cumulants (see Remark \ref{remHas}).
\end{remark}
}

\begin{remark}
If we take $\tilde{h}_n=h_n+\hbar h^\prime_n$ (see Notation \ref{tilde.notation}), Equations \eqref{InfMonMomCum} and \eqref{MonMomCum} give us:
	\begin{equation}
	\label{GrassMonMomCum}
	\tilde{\varphi}_n(a_1, \dots, a_n)=\sum_{(\pi,\lambda)\in \MM(n)} \frac{1}{|\pi|!} \partial \tilde{h}_\pi(a_1, \dots, a_n).
	\end{equation}
Thus, we could have also defined these functions first, and then deduce the infinitesimal cumulants using the relation that appears when we just focus on the order-$\hbar$ coefficient.
\end{remark}

We note in passing that in this work, we are only interested in $\ggg$-valued $\cc$-linear functionals. However, we could replace $\ggg$ with some other commutative unital algebra $\CC$ over $\cc$. For instance, following \cite{Fev,Has} we may define a higher order infinitesimal version of free, Boolean and monotone cumulants. In this setting, \eqref{GrassBooMomCum} and \eqref{GrassMonMomCum} could be identified with the analog first order infinitesimal cumulants introduced in \cite{Fev,Has}.

\begin{remark}
Similar to the infinitesimal free case, describing multivariate infinitesimal monotone moment-cumulant relations using generating series seems to be rather intricate. We are not aware of such a result. However, the shuffle algebra approach permits to describe an equivalent result. We refer the reader to Equation \eqref{monotonecumulants} in Subsection \ref{ssec:monotone}. 
\end{remark}


\section{Infinitesimal cumulants from the shuffle viewpoint}
\label{sec:ShuffleInfCumulants}


\subsection{Extension to the commutative algebra $\mathbb{G}$}
\label{ssec:Grassmann}

We now consider the incps $(\A,\varphi,\varphi^\prime)$ and the associated double tensor Hopf algebra $H = T(T_+(\A))$ together with the character $\Phi:H\to\mathbb{C}$ in $G$ defined in terms of $\varphi$. Motivated by the previous section, we want to consider a $\mathbb{C}$-linear map 
$\tilde{\Phi}:H\to \mathbb{G} \in \operatorname{Hom}(H, \mathbb{G})$ defined by 
$$
	\tilde{\Phi}=\Phi + \hbar \Phi^\prime.
$$
Since we want to mimic the shuffle algebra approach to moment-cumulant relations in terms of $\tilde{\Phi}$, we need that this map is a character. More precisely, given two elements $w_1,w_2\in H$, then $\tilde{\Phi}(w_1|w_2) = \Phi(w_1|w_2) + \hbar \Phi^\prime(w_1|w_2)$ and
\begin{eqnarray*}
	\tilde{\Phi}(w_1)\tilde{\Phi}(w_2)
	&=& (\Phi(w_1)+\hbar \Phi^\prime(w_1))(\Phi(w_2)+\hbar \Phi^\prime(w_2))\\ 
	&=& \Phi(w_1)\Phi(w_2) 
			+ \hbar \Big(\Phi(w_1)\Phi^\prime(w_2) 
			+ \Phi^\prime(w_1)\Phi(w_2) \Big).
\end{eqnarray*}
{More generally, we say that $\Phi'$ has the \textit{Leibniz-type property} if 
\begin{equation}
\label{Multprima}
	\Phi'(w_1|w_2|\cdots | w_k) 
	= \sum_{i=1}^k \left(\prod_{\substack{j=1\\j\neq i}}^k \Phi(w_j)\right) \Phi'(w_i)
\end{equation}
for any elements $w_1,\ldots,w_k\in H$. Hence, $\tilde{\Phi}$ is {multiplicative} on $H$ if and only if $\Phi$ is character on $H$ and $\Phi^\prime$ has the Leibniz-type property \eqref{Multprima}.}
\par Thereby, we define $\Phi^\prime \colon H \to \mathbb{C}$ to be the linear extension of $
\varphi^\prime$ defined by \linebreak $\Phi^\prime(w) := \varphi^\prime(a_1 \cdot_{\scriptscriptstyle{\A}} \cdots \cdot_{\scriptscriptstyle{\A}} a_n)$ for $w = a_1\cdots a_n$ and satisfying the Leibniz-type property \eqref{Multprima}. This implies that $\Phi^\prime(\mathbf{1})=0$. We then have that $\tilde{\Phi} $ is multiplicative, $\tilde{\Phi}(w_1|w_2) = \tilde{\Phi}(w_1)\tilde{\Phi}(w_2)$, and 
$$
	\tilde{\Phi}(\mathbf{1}) 
	= \Phi(\mathbf{1}) + \hbar \Phi^\prime(\mathbf{1}) 
	= 1_{\mathbb{G}}.
$$
Hence, it is natural to consider the space $\operatorname{Hom}(H,\mathbb{G})$ of linear maps from $H$ into $\mathbb{G}$. The main point here is that all the constructions and results that we obtained for $\operatorname{Hom}(H,\mathbb{C})$ carry over to $\operatorname{Hom}(H,\mathbb{G})$. We remark that it is the commutativity of $\mathbb{G}$, which is central for this to work smoothly. In particular, for $\tilde f,\tilde g\in \operatorname{Hom}(H,\mathbb{G})$, we have the shuffle algebra products
\begin{eqnarray*}
	\tilde f\star\tilde g 
		&=& m_{\mathbb{G}}(\tilde f\otimes \tilde g)\Delta,\\
	\tilde f\prec \tilde g 
		&=& m_{\mathbb{G}}(\tilde f\otimes \tilde g)\Delta_\prec,\\ 
	\tilde f\succ \tilde g 
		&=& m_{\mathbb{G}}(\tilde f\otimes \tilde g)\Delta_\succ,
\end{eqnarray*}
where $m_\mathbb{G}$ stands for the multiplication in $\mathbb{G}$. With these operations, we then conclude that $(\operatorname{Hom}(H,\mathbb{G}),\prec,\succ)$ is a unital shuffle algebra. From now on, if we have an element $\tilde f\in \operatorname{Hom}(H,\mathbb{G})$, we will refer as $f,f^\prime$ to the unique elements in $\operatorname{Hom}(H,\mathbb{C})$ such that $\tilde{f}=f+\hbar f^\prime$. Observe that for $\tilde f,\tilde g\in \operatorname{Hom}(H,\mathbb{G})$ and $\ast\in\{\star,\prec,\succ\}$ we have that
$$
	\tilde f * \tilde g = (f+\hbar f^\prime)*(g+\hbar g^\prime)
	= f * g +\hbar(f^\prime * g + f* g^\prime). 
$$
Also, if $\tilde f$ is invertible in $\operatorname{Hom}(H,\mathbb{G})$ (with respect to the shuffle product $\star$), then its inverse is given as follows
$$
	\tilde f^{-1} = f^{-1}- \hbar (f^{-1} \star f^\prime \star f^{-1}).
$$

\par In the following, we denote the group of characters in $\operatorname{Hom}(H,\mathbb{G})$ by $\tilde G$ and its corresponding (pre-)Lie algebra of infinitesimal characters by $\tilde{\mathfrak{g}}$.  We also use the same definitions of characters and infinitesimal characters in $\operatorname{Hom}(H,\mathbb{G})$. Hence, we will have the following version for the $\mathbb{G}$-valued moment-cumulant relations.

\begin{proposition}
	Given a character $\tilde{\Phi}$ in $\tilde G$, there exist unique infinitesimal characters $\tilde{\rho}$, $\tilde{\kappa}$ and $\tilde{\beta}$ in $\tilde g$, such that $\tilde{\Phi}= \exp^\star(\tilde{\rho})$ and
\begin{eqnarray}
\label{MCGF}
	\tilde{\Phi} &=& \tilde{\epsilon} + \tilde{\kappa} \prec \tilde{\Phi},\\
\label{MCBG}
	\tilde{\Phi} &=& \tilde{\epsilon} + \tilde{\Phi}\succ\tilde{\beta},
\end{eqnarray}
	where $\tilde{\epsilon}\in \operatorname{Hom}(H,\mathbb{G})$ is defined as $\tilde{\epsilon}({\bf{1}})=1_{\mathbb{G}}$ and $\tilde{\epsilon}(w)=0$ if $w\not\in H\backslash\mathbb{C}\bf{1}$.
\end{proposition}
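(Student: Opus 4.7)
The plan is to mimic, essentially verbatim, the argument that establishes the analogous result in $\operatorname{Hom}(H,\mathbb{C})$ (the theorem from \cite{EP1} quoted above), relying on the fact that $\mathbb{G}$ is a commutative unital $\mathbb{C}$-algebra and that $H$ is graded connected. The authors have already verified that $(\operatorname{Hom}(H,\mathbb{G}),\prec,\succ)$ is a unital shuffle algebra, so the shuffle identities \eqref{A1}--\eqref{A3} are available; this is the only structural ingredient that the original proof uses about the target of the linear maps.

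First I would establish existence and uniqueness of $\tilde\kappa$ from the left fixed-point equation \eqref{MCGF} by solving it degree by degree. Since $H = \bigoplus_{n\geq 0} T_+(\A)^{\otimes n}$ is graded connected, evaluating \eqref{MCGF} on a word $w = a_1\cdots a_n \in T_+(\A)$ and using the formula for $\Delta_\prec$, one finds
\[
\tilde\Phi(w) \;=\; \tilde\kappa(w)\,\tilde\Phi(\mathbf{1}) \;+\; \sum_{\substack{1\in S\subsetneq [n]}} \tilde\kappa(a_S)\,\tilde\Phi(a_{J_1}|\cdots|a_{J_k}),
\]
where the right-hand sum only involves values of $\tilde\kappa$ on words strictly shorter than $w$. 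Since $\tilde\Phi(\mathbf{1})=1$ is invertible in $\mathbb{G}$, this relation determines $\tilde\kappa(w)$ uniquely by induction on $\mathrm{deg}(w)$, and extends multiplicatively to $H$. Evaluating \eqref{MCGF} at $\mathbf{1}$ gives $\tilde\kappa(\mathbf{1})=0$. The non-trivial check is that $\tilde\kappa$ vanishes on products $w_1 | w_2$ with $w_1,w_2\in H_+$; this is exactly the argument used in \cite{EP1,EP2}, which combines the multiplicativity of $\tilde\Phi$ with the compatibility between the product in $H$ and the left half-shuffle coproduct $\Delta_\prec$ from the unshuffle bialgebra structure, and which uses only commutativity of the target algebra. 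The right fixed-point equation \eqref{MCBG} defining $\tilde\beta$ is handled symmetrically using $\Delta_\succ$.

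For the exponential presentation $\tilde\Phi = \exp^\star(\tilde\rho)$, I would invoke the standard Milnor--Moore-type bijection for a graded connected Hopf algebra: since $H$ is graded connected and $\mathbb{G}$ is a commutative $\mathbb{C}$-algebra of characteristic zero, the series $\exp^\star$ and $\log^\star$ are well-defined on $\operatorname{Hom}(H,\mathbb{G})$ (being finite sums when evaluated on any homogeneous element), they are mutually inverse, and they restrict to a bijection between the Lie algebra $\tilde{\mathfrak{g}}$ and the group $\tilde G$. Setting $\tilde\rho := \log^\star(\tilde\Phi)$ then produces the required infinitesimal character, and uniqueness follows from the injectivity of $\exp^\star$.

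Alternatively — and this would give a quick conceptual proof — one can decompose $\tilde\Phi = \Phi + \hbar\Phi'$ where $\Phi \in G$ and $\Phi'$ satisfies the Leibniz rule \eqref{Multprima}; apply the already-established \cite{EP1} theorem to produce the triple $(\rho,\kappa,\beta)$ associated to $\Phi$; and then read off the $\hbar$-components of the fixed-point equations to solve for unique $\rho', \kappa', \beta'$ recursively, setting $\tilde\rho=\rho+\hbar\rho'$ and similarly for the others. I do not expect any genuine obstacle here: the only thing one must be careful about is that every manipulation performed in $\operatorname{Hom}(H,\mathbb{C})$ in \cite{EP1} only invokes commutativity and associativity of the target algebra and the shuffle identities, both of which hold in $\operatorname{Hom}(H,\mathbb{G})$. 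The mildest subtlety is verifying the infinitesimal-character property on bar-products, but this reduces to the same combinatorial identity on $\Delta_\prec$ and $\Delta_\succ$ as in the classical setting.
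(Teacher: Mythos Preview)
Your proposal is correct and aligns with the paper's approach: the paper does not give an explicit proof of this proposition at all, but rather states it as a direct consequence of the preceding observation that ``all the constructions and results that we obtained for $\operatorname{Hom}(H,\mathbb{C})$ carry over to $\operatorname{Hom}(H,\mathbb{G})$,'' with the commutativity of $\mathbb{G}$ being the key point. Your detailed plan (degree-by-degree recursion for $\tilde\kappa$ and $\tilde\beta$, and $\log^\star$ for $\tilde\rho$) is exactly a fleshing-out of this transfer principle, and your alternative approach via the $\hbar$-decomposition is in fact how the paper proceeds immediately after the proposition to derive the systems \eqref{freeShuffleEq}--\eqref{BoolShuffleEq}.
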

Equations \eqref{MCGF} and \eqref{MCBG} are equivalent to the systems
\begin{align}
	\Phi &= \epsilon + \kappa \prec \Phi, \qquad \Phi^\prime 
		= \kappa^\prime \prec \Phi + \kappa\prec \Phi^\prime, 	\label{freeShuffleEq}\\ 
	\Phi &= \epsilon + \Phi \succ \beta, \qquad \Phi^\prime  
		= \Phi\succ \beta^\prime + \Phi^\prime \succ \beta. 		\label{BoolShuffleEq}
\end{align}
Recall that $\Phi$ is a character and $\Phi^\prime$ satisfies a formal Leibniz rule \eqref{Multprima}. We can show that $\rho^\prime$, $\kappa^\prime$ and $\beta^\prime$ are infinitesimal characters on $\Hom(H,\mathbb{C})$.


\subsection{Free and Boolean infinitesimal moment-cumulant relation in the shuffle approach}
\label{ssec:FreeBoolShuff}

In previous works, Ebrahimi-Fard and Patras showed that fixed point equations $\Phi = \epsilon + \kappa \prec \Phi$ and $\Phi = \epsilon + \Phi \succ \beta$ are equivalent to the free and Boolean cumulant-moment relations, respectively. We will see that the order-$\hbar$ coefficients of Equations \eqref{MCGF} and \eqref{MCBG} correspond to the infinitesimal version of the moment-cumulant relations.

\begin{proposition}
\label{prop:infmomentcumulant}
	Let $(\A,\varphi,\varphi^\prime)$ be an incps, and let $\Phi,\Phi^\prime$ be the corresponding extensions to linear maps $H\to\mathbb{C}$ as above. Let $\tilde{\Phi} = \Phi + \hbar \Phi^\prime$ be a character in $\Hom(H,\mathbb{G})$, and let $\tilde{\kappa}=\kappa+\hbar \kappa^\prime$ and $\tilde{\beta}=\beta+\hbar\beta^\prime$ be the solutions of \eqref{MCGF} and \eqref{MCBG}, respectively. Then for every word $w = a_1\cdots a_n \in T_+(\A)$, we have that
$$
	\kappa^\prime(w)=r_n^\prime(a_1,\ldots,a_n) \quad\ \textrm{and}
	\quad\
	\beta^\prime(w)=b_n^\prime(a_1,\ldots,a_n).
$$
This means that the infinitesimal characters $\kappa^\prime $ and $\beta^\prime$ evaluated in a word $w = a_1\cdots a_n$ actually identify with the infinitesimal free cumulants and infinitesimal Boolean cumulants of $(a_1,\ldots,a_n)$, respectively.
\end{proposition}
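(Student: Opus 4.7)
The plan is to exploit the shuffle-exponential representation of $\tilde\Phi$ provided by (MCGF) and (MCBG), and then extract the coefficient of $\hbar$. Using the equivalent form $\tilde\Phi = \E_\prec(\tilde\kappa)$, I would first invoke the combinatorial expansion of the half-shuffle exponential over non-crossing partitions. Since $\mathbb{G}$ is commutative, which the preceding subsection stressed is what makes all of the shuffle-algebra theory transport smoothly to $\operatorname{Hom}(H,\mathbb{G})$, exactly the same derivation as in the scalar case from \cite{EP1,EP2} applies, yielding
\[
  \tilde\Phi(a_1\cdots a_n) \;=\; \E_\prec(\tilde\kappa)(a_1\cdots a_n) \;=\; \sum_{\pi\in\NC(n)} \tilde\kappa_\pi(a_1,\ldots,a_n),
\]
where $\tilde\kappa_\pi$ is the $\mathbb{G}$-valued product over the blocks of $\pi$. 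By the Remark following Notation~3.5 one has $\tilde\kappa_\pi = \kappa_\pi + \hbar\,\partial\kappa_\pi$.

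Next I would separate the two $\hbar$-degrees of this identity. The $\hbar^0$-component reproduces $\Phi(a_1\cdots a_n) = \sum_{\pi\in\NC(n)} \kappa_\pi(a_1,\ldots,a_n)$, which by the known scalar result of \cite{EP1} forces $\kappa(w)=r_n(a_1,\ldots,a_n)$, hence $\kappa_\pi = r_\pi$ for every $\pi\in\NC(n)$. The $\hbar^1$-component, in view of $\Phi'(w)=\varphi'_n(a_1,\ldots,a_n)$ and $\kappa_\pi = r_\pi$, reads
\[
  \varphi'_n(a_1,\ldots,a_n) \;=\; \sum_{\pi\in\NC(n)} \partial\kappa_\pi(a_1,\ldots,a_n).
\]
Comparing with the defining recursion \eqref{InfFreeMomCum} of $r'_n$,
\[
  \varphi'_n(a_1,\ldots,a_n) \;=\; \sum_{\pi\in\NC(n)} \partial r_\pi(a_1,\ldots,a_n),
\]
I would then argue by induction on $n$. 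The base case $n=1$ is immediate since the only partition is $\pi=1_1$ and the formula collapses to $\kappa'(a_1) = \varphi'_1(a_1) = r'_1(a_1)$. For the inductive step, the hypothesis $\kappa'_k = r'_k$ for $k<n$ implies that every summand on the two sides indexed by $\pi\neq 1_n$ coincides (those terms only involve $\kappa_{|V|}=r_{|V|}$ and $\kappa'_{|V|}=r'_{|V|}$ for $|V|<n$), so the contributions from $\pi=1_n$ must agree, which is exactly $\kappa'(w)=r'_n(a_1,\ldots,a_n)$.

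The Boolean half of the proposition is obtained by an essentially identical argument, now starting from $\tilde\Phi=\E_\succ(\tilde\beta)$ in (MCBG) and using the corresponding combinatorial expansion over interval partitions $\II(n)$ in place of $\NC(n)$; the comparison is then made against the defining relation \eqref{InfBooMomCum} of $b'_n$. The main point to be careful about is the transfer of the scalar combinatorial formula for $\E_\prec$ and $\E_\succ$ to the $\mathbb{G}$-valued setting; this is where commutativity of $\mathbb{G}$ enters, since it allows the ordered products of block contributions produced by the iterated half-shuffle coproducts to be reorganised as the symmetric block-indexed product appearing in $\tilde\kappa_\pi$ and $\tilde\beta_\pi$. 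Once this is in place, extracting the $\hbar$-coefficient and invoking induction is routine.
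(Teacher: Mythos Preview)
Your argument is correct, but it takes a different route from the paper. The paper works directly with the $\hbar$-component of the fixed-point system \eqref{BoolShuffleEq}, namely $\Phi' = \Phi\succ\beta' + \Phi'\succ\beta$, and expands each summand using the explicit form of $\Delta_\succ$. It then combines the scalar Boolean moment--cumulant relation (for the first sum) with the induction hypothesis (for the second sum) to assemble by hand the identity
\[
  \varphi'_n(a_1,\ldots,a_n)=\sum_{\pi\in\II(n)}\sum_{V\in\pi}\beta'(a_V)\prod_{W\neq V}\beta(a_W),
\]
from which $\beta'(w)=b'_n(a_1,\ldots,a_n)$ is read off. The free case is declared analogous.

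Your approach instead lifts the full combinatorial expansion of $\E_\prec$ (resp.\ $\E_\succ$) to $\mathbb{G}$-valued maps in one stroke, separates by $\hbar$-degree, and compares the resulting triangular system against the defining recursion \eqref{InfFreeMomCum} (resp.\ \eqref{InfBooMomCum}). This is shorter and conceptually closer to the ``Grassmann lift'' philosophy the paper itself uses in Section~\ref{sec:InfCumulants} to derive \eqref{GrassFreeMomCum} and \eqref{GrassBooMomCum}; it also makes the role of commutativity of $\mathbb{G}$ explicit. The paper's hands-on computation, by contrast, has the merit of producing the recursive structure of $\Phi'$ directly from the shuffle fixed-point equation, which is what feeds into the subsequent discussion of \eqref{inffreerel} and \eqref{infbooleanrel}.
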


\begin{proof}
	We will prove only the infinitesimal Boolean case. The proof is by induction on $n$. The base case is obvious. For the inductive step, assume that we have the result for words of length smaller than $n$. From the definition of $\Delta_\succ$ and \eqref{BoolShuffleEq} we obtain for the word $w=a_1\cdots a_n \in T_+(\mathcal{A})$ that
\begin{eqnarray*}
	\lefteqn{\varphi^\prime(a_1 \cdot_{\scriptscriptstyle{\A}} \cdots \cdot_{\scriptscriptstyle{\A}} a_n) 
	= \Phi^\prime(w) = \Phi\succ \beta^\prime(w) + \Phi^\prime \succ \beta(w)}\\
	 &=& \sum_{1\not\in S\subseteq [n]} \Phi(a_S)\beta^\prime(a_J^S) 
	 	+ \sum_{1\not\in S\subseteq [n]} \Phi^\prime(a_S)\beta(a_J^S)\\ 
	 &=& \sum_{m=1}^{n} \beta^\prime(a_1\cdots a_m)\Phi(a_{m+1}\cdots a_n) 
	 	+ \sum_{m=1}^{n-1} \beta(a_1\cdots a_m)\Phi^\prime(a_{m+1}\cdots a_n),
\end{eqnarray*}
where in the last equation we used that both $\beta$ and $\beta^\prime$ are infinitesimal characters. Using the Boolean moment-cumulant relation, we note that the first sum above is equal to
\begin{align*}
	\lefteqn{\sum_{m=1}^{n} \beta^\prime(a_1\cdots a_m)\Phi(a_{m+1}\cdots a_n) 
	= \beta^\prime(a_1\cdots a_n) }\\
	&	+ \sum_{m=1}^{n-1} \beta^\prime(a_1\cdots a_m)\sum_{\pi\in \II(\{m+1,\ldots,n\})}  \prod_{V\in\pi} \beta(a_V) \\ 
	&= \sum_{\pi\in \II(n)} \beta^\prime(a_{V_1}) \prod_{W\in\pi \atop W\neq V} \beta(a_W),
\end{align*}
where $V_1$ denotes the block in $\pi$ containing 1.  On the other hand, by using the induction hypothesis in the second sum we have that
\begin{eqnarray*}
	\lefteqn{\sum_{m=1}^{n-1} \beta(a_1\cdots a_m)\Phi^\prime(a_{m+1}\cdots a_n)}  \\
	&=& \sum_{m=1}^{n-1} \beta(a_1\cdots a_m) \sum_{\pi\in \II(\{m+1,\ldots,n\})}
		\sum_{V\in \pi} \beta^\prime(a_V) 
		\prod_{W\in\pi \atop W\neq V} \beta(a_W)\\ 
	&=&  \sum_{\pi\in \II(n)}\sum_{V \in \pi \atop V\neq V_1} \beta^\prime(a_V) 
		\prod_{W\in\pi \atop W\neq V} \beta(a_W).
\end{eqnarray*}
Combining the two sums above we obtain
\begin{equation}
\label{infboolean}
	\varphi^\prime(a_1 \cdot_{\scriptscriptstyle{\A}} \cdots \cdot_{\scriptscriptstyle{\A}} a_n) 
	= \sum_{\pi\in \II(n)}\sum_{V\in \pi} \beta^\prime(a_V)
	\prod_{W\in \pi \atop W\neq V}\beta(a_W),
\end{equation}
and since $\beta(w)=b(a_1, \ldots, a_n)$, this permits to identify the infinitesimal character $\beta^\prime$ with the multivariate infinitesimal Boolean cumulants, i.e., $ \beta^\prime(w)=b^\prime(a_1, \ldots, a_n)$.
\end{proof}

Let us return to \eqref{addconv} in the context of $\tilde G$ and its Lie algebra $\tilde{\mathfrak{g}}$. For a $\mathbb{G}$-valued character $\tilde\Phi \in \tilde G$ and $\tilde\kappa,\tilde\beta \in \tilde{\mathfrak{g}}$, we observe that 
\begin{align*}
	\tilde\Phi 
	= \mathcal{E}_\prec(\tilde\kappa) 
	=  \mathcal{E}_\prec(\kappa+\hbar \kappa^\prime) 
	&= \Phi \star (\mathcal{E}_\prec(\hbar\kappa^\prime)\ \bole \Phi)\\
	&=\mathcal{E}_\prec(\kappa) \star \mathcal{E}_\prec(\hbar \theta_{\Phi}\kappa^\prime)\\
	&= \Phi\star(\varepsilon + \hbar \theta_{\Phi}(\kappa^\prime))\\
	&= \Phi + \hbar \Phi \star\theta_{\Phi}(\kappa^\prime).
\end{align*}
Here we used that 
$$
	\mathcal{E}_\prec(\hbar \theta_{\Phi}\kappa^\prime)
	=\varepsilon + \hbar \theta_{\Phi}(\kappa^\prime) + O(\hbar^2).
$$
From this we deduce that
\begin{equation}
\label{inffreerel}
	\Phi^\prime=\Phi \star\theta_{\Phi}(\kappa^\prime),
\end{equation}
which is the (shuffle algebra) solution of the defining fixed point equation $\Phi^\prime = \kappa^\prime \prec \Phi + \kappa\prec \Phi^\prime$ and therefore gives the infinitesimal free moment-cumulant relations. See \cite{EM} for more details on shuffle algebra equations. Moreover, it implies that $\Phi^{-1} \star \Phi^\prime \in {\mathfrak{g}}$ and that 
$$
	\kappa^\prime=\theta_{\Phi^{-1}}(\Phi^{-1} \star \Phi^\prime)
	=\Phi\succ (\Phi^{-1} \star \Phi^\prime) \prec \Phi^{-1} .
$$

Let us do some explicit computations. Recall that $\Phi = \epsilon + \kappa \prec \Phi$ implies that 
$$
	\mathcal{E}_\prec(\kappa) 
	= \varepsilon + \kappa + \kappa \prec \kappa + \kappa \prec (\kappa \prec \kappa) + \cdots
$$
Then we find in the univariate case, i.e., for a single letter $a \neq \mathbf{1}$:
\begin{align*}
	\Phi^\prime(a) 
	&= \Phi \star\theta_{\Phi}(\kappa^\prime)(a)=\kappa^\prime(a)=r^\prime_1\\
	\Phi^\prime(aa) 
	&= \theta_{\Phi}(\kappa^\prime)(aa) + 2 \Phi(a)\theta_{\Phi}(\kappa^\prime)(a)\\
	&= \kappa^\prime(aa) + 2\kappa(a)\kappa^\prime(a)\\
	&=r^\prime_2 + 2 r^\prime_1r_1\\
	\Phi^\prime(aaa) 
	&=  \theta_{\Phi}(\kappa^\prime)(aaa) 
		+ 2 \Phi(a)\theta_{\Phi}(\kappa^\prime)(aa) 
		+  3\Phi(aa)\theta_{\Phi}(\kappa^\prime)(a)\\
	&= \kappa^\prime(aaa) + 3\kappa^\prime(aa)\kappa(a) 
		+ 3 \kappa(aa)\kappa^\prime(a) + 3 \kappa(a)\kappa(a)\kappa^\prime(a)  \\ 
	&= r^\prime_3 + 3r^\prime_2r_1 + 3 r^\prime_1r_2 + 3 r^\prime_1r_1r_1 \\	
	\Phi^\prime(aaaa) 
	&= \theta_{\Phi}(\kappa^\prime)(aaaa) + 2 \Phi(a) \theta_{\Phi}(\kappa^\prime)(aaa) 
		+ 3 \Phi(aa) \theta_{\Phi}(\kappa^\prime)(aa) 
		+ 4 \Phi(aaa) \theta_{\Phi}(\kappa^\prime)(a) \\
	&= \kappa^\prime(aaaa) 
		+ 4 \kappa^\prime(aa)\kappa(aa) 
		+ 4 \kappa^\prime(aaa)\kappa(a)
		+ 4 \kappa(aaa)\kappa^\prime(a)\\
	&\qquad
		+ 6 \kappa(a)\kappa(a) \kappa^\prime(aa)
		+ 12\kappa(aa)\kappa(a)\kappa^\prime(a) 
		+ 4 \kappa(a)\kappa(a)\kappa(a)\kappa^\prime(a)\\
	&= r^\prime_4 + 4 r^\prime_2r_2 + 4r^\prime_3r_1 + 4 r^\prime_1r_3 
		+ 6 r^\prime_2r_1r_1 + 12 r^\prime_1r_2r_1 + 4 r^\prime_1r_1r_1r_1. 	
\end{align*}

\begin{remark}
{{
	In the Hopf algebra setting, the multivariate case is automatically included since we can make the computations on an arbitrary word $w \in T_+(\mathcal{A})$. For instance
	\begin{eqnarray*}
	\Phi'(a_1a_2a_3) 
	&=& \theta_{\Phi}(\kappa')(a_1a_2a_3) 
		+ \Phi(a_1) \theta_{\Phi}(\kappa')(a_2a_3)
		+ \Phi(a_2) \theta_{\Phi}(\kappa')(a_1|a_3) 
		+ \Phi(a_3) \theta_{\Phi}(\kappa')(a_1a_2)\\ 
	& & + \Phi(a_1a_2) \theta_{\Phi}(\kappa')(a_3)
		+ \Phi(a_1a_3) \theta_{\Phi}(\kappa')(a_2) 
		+  \Phi(a_2a_3) \theta_{\Phi}(\kappa')(a_1)\\
	&=& \kappa'(a_1a_2a_3) 
		+ \kappa'(a_1a_3)\kappa(a_2) 
		+ \kappa'(a_2a_3)\kappa(a_1) 
		+ \kappa'(a_1a_2)\kappa(a_3)\\ 
	& & + \kappa'(a_3)(\kappa(a_1a_2) 
		+ \kappa(a_1)\kappa(a_2)) 
		+  \kappa'(a_2)(\kappa(a_1a_3) 
		+ \kappa(a_1)\kappa(a_3))\\ 
	& & + \kappa'(a_1)(\kappa(a_2a_3) 
		+ \kappa(a_2)\kappa(a_3)).
	\end{eqnarray*}}}
\end{remark}

\begin{remark}
We can address in an analogous way the Boolean case. For this, we find that 
\begin{align*}
	\tilde\Phi 
	= \mathcal{E}_\succ(\tilde\beta)
	= \mathcal{E}_\succ(\beta+\hbar\beta^\prime)
	&=(\mathcal{E}_\succ(\hbar\beta^\prime)\ \bori \Phi) \star \Phi\\
	&= \mathcal{E}_\succ(\hbar \theta_{\Phi^{-1}}(\beta^\prime)) \star \mathcal{E}_\succ(\beta) \\
	&= (\varepsilon + \hbar \theta_{\Phi^{-1}}(\beta^\prime))\star\Phi\\
	&= \Phi + \hbar \theta_{\Phi^{-1}}(\beta^\prime)\star\Phi.
\end{align*}
This yields
\begin{equation}
\label{infbooleanrel}
	\Phi^\prime= \theta_{\Phi^{-1}}(\beta^\prime) \star \Phi.
\end{equation}
Expression \eqref{infbooleanrel} solves the shuffle fixed point equation $\Phi^\prime = \Phi\succ \beta^\prime + \Phi^\prime \succ \beta$ and therefore gives the infinitesimal Boolean moment-cumulant relations, i.e., Equation \eqref{infboolean} in the proof of Proposition \ref{prop:infmomentcumulant}. 
\end{remark}

\subsection{Infinitesimal monotone cumulants in the shuffle approach}
\label{ssec:monotone}

We now show how to obtain the corresponding monotone infinitesimal moment--cumulant relations from the exponential bijection given by the convolution product $\star$.

\begin{proposition}
\label{prop:MonotoneShuffle}
Let $(\mathcal{A},\varphi,\varphi^\prime)$ be an incps, and let $\Phi,\Phi^\prime$ be the corresponding extensions to linear maps $H \to \mathbb{C}$ as above. Let $\tilde{\Phi} = \Phi + \hbar \Phi^\prime$ be the character in $\operatorname{Hom}(H,\mathbb{G})$ and $\tilde{\rho}=\rho+\hbar \rho^\prime$ the infinitesimal character such that $\tilde{\Phi} = \exp^\star(\tilde{\rho})$. Then, for every word $w=a_1\cdots a_n\in T_+(\A)$, the infinitesimal character $\rho^\prime$ evaluated in a word $w = a_1\cdots a_n$ identifies with the infinitesimal monotone cumulant, i.e., $h^\prime_n(a_1,\ldots,a_n) = \rho^\prime(w)$.
\end{proposition}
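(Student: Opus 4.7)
The plan is to expand the identity $\tilde{\Phi} = \exp^\star(\tilde{\rho})$ in powers of $\hbar$ and read off the $\hbar$-coefficient. Writing $\tilde{\rho} = \rho + \hbar \rho'$ and using commutativity of $\mathbb{G}$ together with $\hbar^2 = 0$, the convolution powers simplify to
\begin{equation*}
    \tilde{\rho}^{\star k}
        = \rho^{\star k} + \hbar \sum_{j=1}^{k} \rho^{\star(j-1)} \star \rho' \star \rho^{\star(k-j)},
\end{equation*}
which, after dividing by $k!$ and summing, yields
\begin{equation*}
    \Phi'(w)
        = \sum_{k \geq 1} \frac{1}{k!} \sum_{j=1}^{k}
            \bigl(\rho^{\star(j-1)} \star \rho' \star \rho^{\star(k-j)}\bigr)(w).
\end{equation*}

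Next I would invoke the combinatorial interpretation of $\rho^{\star k}(w)$ underlying the classical identity $\exp^\star(\rho)(w) = \sum_{(\pi,\lambda) \in \MM(n)} \frac{1}{|\pi|!} h_\pi(a_1,\ldots,a_n)$ recalled in Section~\ref{ssec:HopfPreliminaries}. Since $\rho$ is an infinitesimal character, only the interval slicings of $w$ survive under the iterated coproduct, and $\rho^{\star k}(w)$ is a sum over monotone non-crossing partitions $(\pi,\lambda) \in \MM(n)$ with $k$ ordered blocks $V_1, \ldots, V_k$, each contributing $\prod_i \rho(a_{V_i}) = h_\pi(a_1, \ldots, a_n)$. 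Replacing the $j$-th convolution factor by $\rho'$ marks the block $V_j$, which then contributes $\rho'(a_{V_j})$ in place of $\rho(a_{V_j})$. Summing over $j$ produces the Leibniz-style expression, so that
\begin{equation*}
    \Phi'(w) = \sum_{(\pi,\lambda) \in \MM(n)} \frac{1}{|\pi|!}
        \sum_{V \in \pi} \rho'(a_V) \prod_{\substack{W \in \pi \\ W \neq V}} h_{|W|}(a_W).
\end{equation*}

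The argument concludes by induction on $n$. For $n = 1$ both sides reduce to $\varphi'(a_1)$, giving $\rho'(a_1) = h'_1(a_1)$. For $n > 1$, the displayed identity contains a unique top-level term $\rho'(a_1\cdots a_n)$ corresponding to $\pi = 1_n$; every other summand involves $\rho'(a_V)$ for a sub-word strictly shorter than $w$, which equals $h'_{|V|}(a_V)$ by the inductive hypothesis. Comparing the resulting expression with the defining recursion \eqref{InfMonMomCum} for $h'_n$, and using $\Phi'(w) = \varphi'_n(a_1,\ldots,a_n)$, all lower-order contributions match term by term, forcing $\rho'(w) = h'_n(a_1,\ldots,a_n)$.

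The principal technical point is the combinatorial identification in the second paragraph, namely that $(\rho^{\star(j-1)} \star \rho' \star \rho^{\star(k-j)})(w)$ enumerates monotone non-crossing partitions of $[n]$ into $k$ ordered blocks with the $j$-th block marked. This is essentially the same counting argument that produced the classical monotone moment-cumulant formula in \cite{EP1,EP2}, merely with one convolution factor distinguished; thanks to the commutativity of $\mathbb{G}$ the marked-slot bookkeeping is straightforward, and the genuinely new content reduces to this replacement.
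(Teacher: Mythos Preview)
Your proposal is correct and follows essentially the same route as the paper's proof. Both expand $\exp^\star(\rho+\hbar\rho')$ using $\hbar^2=0$, invoke the combinatorial description of iterated convolutions of infinitesimal characters (the paper cites this as the ``reduced linearised coproduct'' and Lemma~3 of \cite{EP2}) to obtain $\Phi'(w)=\sum_{(\pi,\lambda)\in\MM(n)}\frac{1}{|\pi|!}\sum_{V\in\pi}\rho'(a_V)\prod_{W\neq V}\rho(a_W)$, and then conclude by matching with the defining recursion \eqref{InfMonMomCum}; your explicit induction and the paper's appeal to uniqueness of that recursion are the same argument.
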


\begin{proof}
By the definition of the exponential map with respect to the convolution (shuffle) product, $\star$, in the $\mathbb{G}$-valued case, we have that
\begin{eqnarray}
	\tilde{\Phi} 
	= \exp^\star(\rho + \hbar \rho^\prime)
	&=& \sum_{k=0}^\infty \frac{(\rho + \hbar \rho^\prime)^{\star k}}{k!} \nonumber \\ 
	&=& \sum_{k=0}^\infty \frac{1}{k!}\left(\rho^{\star k} 
		+ \hbar \left(\sum_{m=1}^k \rho^{\star (m-1)}\star \rho^\prime \star \rho^{\star (k-m)} \right) \right). \label{stepA}
\end{eqnarray}
We will compute the evaluation on a word $v=a_1 \cdots a_m \in T_+(\A)$ of the right-hand side in the above equation. From \cite{EP2} we conclude, since $\rho$ and $\rho^\prime$ are infinitesimal characters, that the convolution product between them only requires the ``reduced linearised'' part of the coproduct, which is given by
$$
	\overline{\Delta}(a_1\cdots a_m) 
	:= \sum_{a_{I_1} a_{I_2} a_{I_3} = v
	\atop  I_1\sqcup I_3 ,{I_2}\neq \emptyset } a_{I_1}a_{I_3} \otimes a_{I_2}.
$$
Then we have, for instance, that $\rho\star \rho^\prime (v) = m_\mathbb{C}(\rho\otimes \rho^\prime)\overline{\Delta}(v)$. More generally, it was shown in Lemma 3 of \cite{EP2} that if $\overline{\Delta}^{[q]}:T_+(A)\to T_+(A)^{\otimes (q+1)}$ stands for the $q$-fold left iterated reduced linearised coproduct, i.e., $\overline{\Delta}^{[q]} = (\overline{\Delta}^{[q-1]}\otimes \operatorname{id})\overline{\Delta}$, then
\begin{equation}
	\overline{\Delta}^{[q-1]}(a_1\cdots a_m) 
	= \sum_{\substack{\pi \in \mathcal{M}^q(m)\\ \pi = \{V_1,\ldots,V_q\}}} a_{V_1}\otimes \cdots \otimes a_{V_q},
\end{equation}
where $\mathcal{M}^q(m)$ denotes the set of monotone partitions of $[m]$ into $q$ blocks, and the blocks of $\pi$ are naturally pre-ordered.
\par We proceed now to the evaluation of the order-$\hbar$ coefficient on the right-hand side of \eqref{stepA}. Given a $k\geq 1$ and $1\leq m\leq k$ we have that 

\begin{eqnarray*}
	\frac{1}{k!}\rho^{\star(m-1)} \star \rho^\prime \star \rho^{{k-m}}(a_1\cdots a_n) 
	&=& \frac{1}{k!} m_\mathbb{C}^{[k]}(\rho \otimes \cdots \otimes  \rho^\prime \otimes \cdots\otimes\rho)
	\overline{\Delta}^{[k-1]}(a_1\cdots a_n)\\ 
	&=& \frac{1}{k!}\sum_{\substack{\pi \in \mathcal{M}^k(n)\\ 
	\pi = \{V_1,\ldots,V_k\}}} \rho(a_{V_1}) \cdots \rho^\prime(a_{V_m})\cdots  \rho(a_{V_k}).
\end{eqnarray*}
Adding over $1\leq m\leq k$ as well as over $1\leq k\leq n$ we obtain
\begin{eqnarray*}
	\Phi^\prime(w) 
	&=& \sum_{k=1}^n \frac{1}{k! }\sum_{m=1}^k \rho^{\star(m-1)} 
			\star \rho^\prime \star \rho^{{k-m}}(a_1\cdots a_n) \\ 
	& =& \sum_{k=1}^n \frac{1}{k!}\sum_{\pi \in\NC^k(n)}m(\pi)\sum_{V \in \pi} \rho^\prime(a_V) 
			\prod_{\substack{W\in \pi\\W\neq V}} \rho(a_W),
\end{eqnarray*}
where  $\NC^k(n)$ stands for the set of non-crossing partitions of $[n]$ with exactly $k$ blocks and  $m(\pi)$ denotes the number of monotone labelings of $\pi$. See Appendix \ref{ssec:parititions} for a more careful discussion on $m(\pi)$. Finally, from \eqref{monotone.labellings} we get that
\begin{equation}
	\varphi^\prime_n(a_1, \cdots, a_n) 
	= \sum_{\pi \in \NC(n)} \frac{1}{\tau(\pi)!}\sum_{V \in \pi} \rho^\prime(a_V) 
	\prod_{\substack{W\in \pi\\W\neq V}} \rho(a_W), 
\end{equation}
and since $\rho(w)=h_n(a_1,\ldots,a_n)$, we conclude that $\rho^\prime(w)=h^\prime_n(a_1,\ldots,a_n)$.
\end{proof}

Consider $\tilde{\Phi}$ and $\tilde{\rho}$ as in the above proposition. This means that 
$$
	\tilde\Phi=\exp^\star(\tilde{\rho})=\exp^\star(\rho + \hbar\rho^\prime).
$$
One can show the factorisation \cite{Reu}
$$
	\exp^\star(\rho + \hbar\rho^\prime)=\exp^\star(\rho) \star \exp^\star(F(\rho,\hbar\rho^\prime)),
$$ 
where
$$
	F(\rho,\hbar\rho^\prime) 
	:= \hbar\rho^\prime + \sum_{n>0} \frac{{(-1)}^n}{(n+1)!}\mathrm{Ad}^{(n)}_{\rho}(\hbar\rho^\prime).	
$$
We can compactly write
$$
	F(\rho,\hbar\rho^\prime)
	= \frac{e^{-\mathrm{Ad}_\rho} - 1}{-\mathrm{Ad}_\rho}(\hbar \rho^\prime)
	=:W_{-\rho}(\hbar \rho^\prime).
$$
Note that $\mathrm{Ad}^{(n)}_\alpha(\beta):=[\alpha,\mathrm{Ad}_\alpha^{(n-1)}(\beta)]=\alpha \star \mathrm{Ad}_\alpha^{(n-1)}(\beta) - \mathrm{Ad}_\alpha^{(n-1)}(\beta)\star \alpha$. This then yields
$$
	\exp^\star(\rho) \star \exp^\star(F(\rho,\hbar\rho^\prime)) 
	= \exp^\star(\rho) \star(\varepsilon + \hbar W_{-\rho}(\rho^\prime)).
$$ 
Note that $W_{-\rho}(\rho^\prime)$ is an infinitesimal character. For $\exp^\star(\tilde{\rho})=\tilde\Phi=\Phi + \hbar \Phi^\prime$ this implies the shuffle analog of monotone infinitesimal moment-cumulant relations
\begin{equation}
\label{monotonecumulants}
	\Phi^\prime=\Phi \star W_{-\rho}(\rho^\prime).
\end{equation}


We compute the first three univariate monotone infinitesimal moment-cumulant relations. Let $a \in \A\subset  T_+(\mathcal{A})$. Then, thanks to $\Delta(a)=a\otimes \mathbf{1} + \mathbf{1} \otimes a$, we have
\begin{align*}
	\Phi^\prime(a)		
		&=\Phi \star W_{-\rho}(\rho^\prime)(a)
		  = W_{-\rho}(\rho^\prime)(a)
		  = \rho^\prime(a) = h^\prime_1\\[0.2cm]
	\Phi^\prime(a^2)		
		&=\Phi \star W_{-\rho}(\rho^\prime)(a^2)
			=2 \Phi(a)W_{-\rho}(\rho^\prime)(a) 
			+ W_{-\rho}(\rho^\prime)(a^2)\\
		&=2 \rho(a)\rho^\prime(a) + \rho^\prime(a^2) 
				- \frac{1}{2}[\rho,\rho^\prime](a^2)\\
		&=2 \rho(a)\rho^\prime(a) + \rho^\prime(a^2) 
				- \frac{1}{2}(\rho\star\rho^\prime - \rho^\prime\star\rho)(a^2)\\
		&= \rho^\prime(a^2) + 2 \rho(a)\rho^\prime(a) 
			= h^\prime_2 + 2 h_1 h^\prime_1\\[0.2cm]
	\Phi^\prime(a^3)	
		&=\Phi \star W_{-\rho}(\rho^\prime)(a^3)\\
		&=2 \Phi(a)W_{-\rho}(\rho^\prime)(a^2) 
			+ 3\Phi(a^2)W_{-\rho}(\rho^\prime)(a) 
				+ W_{-\rho}(\rho^\prime)(a^3)\\	
		&=\rho^\prime(a^3)
			+ \frac{5}{2}\rho^\prime(a)\rho(a^2) 
			+ \frac{5}{2} \rho(a)\rho^\prime(a^2) 
			+ 3\rho(a)\rho(a)\rho^\prime(a)\\[0.2cm]
	\Phi^\prime(a^4) 
	&= \rho^\prime(a^4) 
		+ 3\rho^\prime(a)\rho(a^3)
			+ 3\rho(a)\rho^\prime(a^3) 
				+ 3\rho(a^2)\rho^\prime(a^2) \\ 
	& +\frac{13}{3}\rho^\prime(a^2)\rho(a)\rho(a) 
		+ \frac{26}{3} \rho(a^2)\rho^\prime(a)\rho(a) 
			+ 4\rho^\prime(a)\rho(a)\rho(a)\rho(a).
\end{align*}
These relations are consistent with the formal derivation viewpoint on the monotone moment-cumulant relations:
\begin{align*}
    \Phi(a)	   &= \rho(a)\\
    \Phi(a^2) &=\rho(a^2) + \rho(a)\rho(a)\\
    \Phi(a^3) &=\rho(a^3) + \frac{5}{2} \rho(a)\rho(a^2) + \rho(a)\rho(a)\rho(a)\\
    \Phi(a^4) &=\rho(a^4) + 3 \rho(a)\rho(a^3) + \frac{3}{2} \rho(a^2)\rho(a^2) 
    			+ \frac{13}{3} \rho(a)\rho(a)\rho(a^2)+ \rho(a)\rho(a)\rho(a)\rho(a)
\end{align*}

\begin{remark}
Eventually, Equation \eqref{monotonecumulants} implies that 
$$
	W_{-\rho}(\rho^\prime)= \Phi^{-1}\star\Phi^\prime.
$$
This result is consistent with the viewpoint of taking formal derivations. From the last equation we deduce that 
\begin{equation}
\label{infmonocumulants}
	\rho^\prime 
	= W^{\circ -1}_{-\rho}(\Phi^{-1}\star\Phi^\prime) 
	= \frac{-\mathrm{Ad}_\rho}{e^{-\mathrm{Ad}_\rho} - 1}(\Phi^{-1}\star\Phi^\prime),
\end{equation}
which expresses infinitesimal monotone cumulants in a rather non-trivial manner in terms of moments and infinitesimal moments (recall that $\rho = \log^\star(\Phi)$).
\end{remark}

We can collect the obtained equations  \eqref{inffreerel}, \eqref{infbooleanrel}, \eqref{monotonecumulants} in the following result:

\begin{proposition}
Let $(\A,\varphi,\varphi^\prime)$ be a incps, and let $\Phi, \Phi^\prime$ be the corresponding extensions to linear maps $H\to \mathbb{C}$ as above. Consider the pairs of infinitesimal characters $(\kappa,\kappa^\prime),(\beta,\beta^\prime)$ and $(\rho,\rho^\prime)$  described in Propositions \ref{prop:infmomentcumulant} and \ref{prop:MonotoneShuffle}. Then we have
\begin{eqnarray*}
	\Phi^\prime
	&=&\Phi \star \theta_{\Phi}(\kappa^\prime)\\
	&=& \theta_{\Phi^{-1}}(\beta^\prime) \star \Phi\\
	&=& \Phi \star W_{-\rho}(\rho^\prime).
\end{eqnarray*}
\end{proposition}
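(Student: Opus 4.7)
The proposition is essentially a consolidation of three identities that have already been derived piecewise in the preceding subsections: the free identity as equation \eqref{inffreerel}, the Boolean identity as equation \eqref{infbooleanrel}, and the monotone identity as equation \eqref{monotonecumulants}. The plan is therefore to assemble them into one statement, highlighting the common underlying principle: apply the appropriate exponential-type map ($\mathcal{E}_\prec$, $\mathcal{E}_\succ$, or $\exp^\star$) to $\tilde{\kappa}=\kappa+\hbar\kappa^\prime$, $\tilde{\beta}=\beta+\hbar\beta^\prime$, or $\tilde{\rho}=\rho+\hbar\rho^\prime$ in $\operatorname{Hom}(H,\mathbb{G})$, and then extract the coefficient of $\hbar$ while exploiting the nilpotency relation $\hbar^2=0$.

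For the free case, I would start from $\tilde{\Phi}=\mathcal{E}_\prec(\kappa+\hbar\kappa^\prime)$ and apply the subordination decomposition of the free additive convolution (equation \eqref{addconv}) lifted to the $\mathbb{G}$-valued setting. This yields $\tilde{\Phi}=\mathcal{E}_\prec(\kappa)\star \bigl(\mathcal{E}_\prec(\hbar\kappa^\prime)\ \bole\ \Phi\bigr)=\Phi\star\mathcal{E}_\prec(\hbar\,\theta_{\Phi}(\kappa^\prime))$. Since every term $(\hbar\,\theta_{\Phi}(\kappa^\prime))^{\prec n}$ for $n\ge 2$ carries a factor of $\hbar^2=0$, the half-shuffle exponential collapses to $\varepsilon+\hbar\,\theta_{\Phi}(\kappa^\prime)$, and therefore $\tilde{\Phi}=\Phi+\hbar\,\Phi\star\theta_{\Phi}(\kappa^\prime)$. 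Reading off the $\hbar$-coefficient gives the first equality. The Boolean identity is obtained by the dual argument, using the left subordination action $\bori$ of \eqref{leftshuffleadjointgroupaction} together with $\mathcal{E}_\succ$, yielding $\tilde{\Phi}=\bigl(\varepsilon+\hbar\,\theta_{\Phi^{-1}}(\beta^\prime)\bigr)\star\Phi$ and hence $\Phi^\prime=\theta_{\Phi^{-1}}(\beta^\prime)\star\Phi$.

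For the monotone case, I would invoke the pre-Lie factorisation of $\exp^\star$ on sums, namely $\exp^\star(\rho+\hbar\rho^\prime)=\exp^\star(\rho)\star\exp^\star\bigl(W_\rho(\hbar\rho^\prime)\bigr)$, which is the BCH-type identity appropriate to this shuffle setting (and which, on a different level, is the source of the operator $W_\rho$ in the first place). Because every term of the series $W_\rho(\hbar\rho^\prime)=\hbar\rho^\prime+\sum_{n>0}\frac{(-1)^n}{(n+1)!}\mathrm{Ad}^{(n)}_{\rho}(\hbar\rho^\prime)$ is $\mathbb{G}$-linear in $\hbar\rho^\prime$, it is itself of the form $\hbar\,\xi$ for the infinitesimal character $\xi=W_\rho(\rho^\prime)$, so $\hbar^2=0$ again collapses the outer $\exp^\star$ to $\varepsilon+\hbar\,W_\rho(\rho^\prime)$. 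This yields $\tilde{\Phi}=\Phi+\hbar\,\Phi\star W_\rho(\rho^\prime)$, and the third equality follows.

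The mildly subtle step is the monotone one, where one must justify the BCH-style factorisation and check that only the $\hbar$-linear part of $W_\rho$ survives; but both amount to standard manipulations in the pre-Lie/shuffle algebra $\mathfrak{g}$, which have already been used in establishing \eqref{monotonecumulants}. No further computation is needed beyond recording these three results side by side.
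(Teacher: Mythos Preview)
Your proposal is correct and matches the paper's approach exactly: the proposition is presented in the paper as a summary that ``collect[s] the obtained equations \eqref{inffreerel}, \eqref{infbooleanrel}, \eqref{monotonecumulants},'' with no separate proof given. Your outline of how each of those three identities was derived (via the $\bole$-action for the free case, the $\bori$-action for the Boolean case, and the BCH-type factorisation for the monotone case) faithfully reproduces the arguments the paper gives in the text leading up to the proposition.
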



\section{Infinitesimal cumulant-cumulant relations}
\label{sec:RelationsInfCum}

In this section, we are going to generalise to the infinitesimal setting some relations between cumulants that are already known in the context of non-commutative probability. Specifically, we want to check how the formulas \eqref{BooFree}, \eqref{BooMon}, \eqref{FreeBoo}, and \eqref{FreeMon} from Arizmendi et al, reference \cite{AHLV}, are carried over to the infinitesimal setting. The nice thing about these specific formulas is that their proofs do not really use the fact that the $\varphi_n, b_n,r_n,h_n$ are functionals and we can replace them by our $\cc$-linear functionals into $\ggg$, $\tilde{\varphi}_n, \tilde{b}_n,\tilde{r}_n,\tilde{h}_n$. We have the following theorem:

\begin{theorem}
Let $(\A,\varphi,\varphi^\prime)$ be an incps, and let $\{ \tilde{r}_n\}_{n \ge 1}$, $\{ \tilde{b}_n\}_{n \ge 1}$, and $\{ \tilde{h}_n\}_{n \ge 1}$ be the families of $\cc$-linear maps from $\A^{n}$ to $\mathbb{G}$ obtained from merging cumulants $r_n$, $b_n$, $h_n$ with infinitesimal cumulants $r^\prime_n$, $b^\prime_n$, $h^\prime_n$, respectively. Then, the following relations between $\tilde{b}$, $\tilde{r}$, $\tilde{h}$ hold:
\begin{eqnarray}
	\tilde{b}_n (a_1, \dots, a_n) 
		&=& \sum_{\pi\in \NN\CC_{irr}(n)} \tilde{r}_\pi(a_1, \dots, a_n), \label{GrassBooFree}\\
	\tilde{r}_n (a_1, \dots, a_n) 
		&=& \sum_{\pi\in \NN\CC_{irr}(n)} (-1)^{|\pi|-1} \tilde{b}_\pi(a_1, \dots, a_n), \label{GrassFreeBoo}\\
	\tilde{b}_n (a_1, \dots, a_n) 
		&=& \sum_{\pi\in \NN\CC_{irr}(n)} \frac{1}{\tau(\pi)!}\tilde{h}_\pi(a_1, \dots, a_n), \label{GrassBooMon} \\
	\tilde{r}_n (a_1, \dots, a_n) 
		&=& \sum_{\pi\in \NN\CC_{irr}(n)} \frac{(-1)^{|\pi|-1}}{\tau(\pi)!} \tilde{h}_\pi(a_1, \dots, a_n), \label{GrassFreeMon}
\end{eqnarray}
for every $n$ and elements $a_1,\dots, a_n\in \AA$.
\end{theorem}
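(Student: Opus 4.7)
The plan is to lift, rather than reprove from scratch, the Arizmendi--Hasebe--Lehner--Vargas relations \eqref{BooFree}-\eqref{FreeMon}. The key observation is that each of these identities depends only on two structural features of the cumulants involved: the moment-cumulant formulas and the multiplicativity $f_\pi(a_1,\dots,a_n)=\prod_{V\in\pi}f_{|V|}(a_V)$ over blocks. The former are available in the Grassmann setting through \eqref{GrassFreeMomCum}, \eqref{GrassBooMomCum} and \eqref{GrassMonMomCum}; the latter makes sense precisely because $\mathbb{G}$ is \emph{commutative}, so products over blocks are unambiguous and the bookkeeping on non-crossing, interval and monotone partitions is identical to the scalar case.

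A direct proof then proceeds by inspecting the arguments of \cite{AHLV} and verifying that they go through verbatim after substituting $\tilde{\varphi}_n,\tilde{r}_n,\tilde{b}_n,\tilde{h}_n$ for their scalar counterparts. For instance, \eqref{GrassFreeBoo} is obtained from M\"obius inversion on the interval-partition lattice applied to \eqref{GrassBooMomCum}, followed by the standard reduction to irreducible non-crossing partitions; \eqref{GrassBooFree} follows symmetrically from the free moment-cumulant formula; and \eqref{GrassBooMon}, \eqref{GrassFreeMon} are obtained by the same combinatorial replacements in the monotone arguments of \cite{AHLV}. No new combinatorial input is required.

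A more conceptual route uses the shuffle algebra $(\operatorname{Hom}(H,\mathbb{G}),\prec,\succ)$ set up in Subsection \ref{ssec:Grassmann}. There the character $\tilde{\Phi}=\Phi+\hbar\Phi^\prime$ admits unique infinitesimal-character logarithms $\tilde{\kappa},\tilde{\beta},\tilde{\rho}$ with $\tilde{\Phi}=\E_\prec(\tilde{\kappa})=\E_\succ(\tilde{\beta})=\exp^\star(\tilde{\rho})$, and the fixed-point equations \eqref{MCGF}-\eqref{MCBG} force $\tilde{\kappa}\prec\tilde{\Phi}=\tilde{\Phi}\succ\tilde{\beta}$, hence $\tilde{\beta}=\theta_{\tilde{\Phi}}(\tilde{\kappa})$ and $\tilde{\kappa}=\theta_{\tilde{\Phi}^{-1}}(\tilde{\beta})$. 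Lemma \ref{lem:CombAdjoint}, whose proof rests solely on the shuffle axioms and the character property of $\tilde{\Phi}$, extends verbatim to the $\mathbb{G}$-valued setting and yields \eqref{GrassBooFree} and \eqref{GrassFreeBoo} upon taking $\alpha=\tilde{\kappa}$ and $\alpha=\tilde{\beta}$ respectively. For \eqref{GrassBooMon} and \eqref{GrassFreeMon} one combines these with the pre-Lie Magnus identifications \eqref{PreLieMagnus0}, now read inside the pre-Lie algebra of $\mathbb{G}$-valued infinitesimal characters.

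The only genuine verification, and hence the main obstacle, is checking that the full shuffle-algebraic scaffolding of Subsection \ref{ssec:HopfPreliminaries} transfers from $\operatorname{Hom}(H,\mathbb{C})$ to $\operatorname{Hom}(H,\mathbb{G})$. This hinges entirely on the commutativity of $\mathbb{G}$: commutativity makes the convolution $\star$ associative and the half-shuffles $\prec,\succ$ satisfy \eqref{A1}-\eqref{A3}, so that Lemma \ref{lem:CombAdjoint} and the identities \eqref{PreLieMagnus0} all survive unchanged. Once this is in place, each of \eqref{GrassBooFree}-\eqref{GrassFreeMon} follows with no further effort, and extracting the $\hbar$-coefficient of both sides recovers Theorem \ref{Thm1}.
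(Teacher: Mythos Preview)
Your proposal is correct and mirrors the paper's own strategy: the paper explicitly remarks that the AHLV proofs ``do not really use the fact that the $\varphi_n, b_n,r_n,h_n$ are functionals'' and hence transfer verbatim to the $\mathbb{G}$-valued setting, and it then carries out a mix of the direct combinatorial and shuffle-algebra arguments you outline (choosing whichever is cleaner for each of the four identities). One small caveat: your suggestion to derive \eqref{GrassBooMon} and \eqref{GrassFreeMon} via the pre-Lie Magnus identifications \eqref{PreLieMagnus0} is not the route the paper takes and would not be straightforward, since Magnus gives $\tilde{\rho}$ in terms of $\tilde{\kappa}$ or $\tilde{\beta}$ rather than the reverse; the paper instead obtains \eqref{GrassBooMon} from the Boolean fixed-point equation $\tilde{\Phi}-\tilde{\varepsilon}=\tilde{\Phi}\succ\tilde{\beta}$ together with the monotone moment-cumulant relation, and then deduces \eqref{GrassFreeMon} by substituting \eqref{GrassBooMon} into \eqref{GrassFreeBoo} and using the min-max order identity \eqref{formula.partitions}.
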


\begin{proof}
The proof of these formulas can be obtained by either of the both approaches we discussed in Sections \ref{sec:InfCumulants} and \ref{sec:ShuffleInfCumulants}. 
Through this proof we work with an integer $n$ and generic elements $a_1,\dots, a_n\in \AA$. We will denote $w = a_1\cdots a_n\in T_+(\A)$, $a=(a_1,\dots,a_n)\in\AA^n$ and $(a|_V)=(a_{v_1},\ldots,a_{v_k})\in\AA^k$ for a subset $V=\{{v_1}< \cdots< {v_k}\}\in [n]$.
\begin{itemize}
\item To get \eqref{GrassBooFree} we begin by proposing the candidate Boolean cumulants $c_n$ by 
$$
	c_n (a_1, \dots, a_n)
	= \sum_{\pi\in \NN\CC_{irr}(n)} \tilde{r}_\pi(a_1, \dots, a_n).
$$
Our aim is to show that $c_n=\tilde{b}_n$. Observe that we can use the Grassmann free moment-cumulant formula \eqref{GrassFreeMomCum}, to compute:
\begin{align*}
	\tilde{\varphi}_n(a) 
	&=\sum_{\sigma \in \NC(n)} \tilde{r}_\sigma(a) =\sum_{\pi \in \II(n)} \sum_{\substack{\sigma \in \NC(n)\\  \sigma\ll\pi}} \tilde{r}_\sigma(a)
	= \sum_{\pi \in \II(n)} \sum_{\substack{\sigma \in \NC(n)\\ \sigma\ll\pi}} \prod_{V\in\sigma}\tilde{r}_{|V|}(a|_V)\\
	&= \sum_{\pi \in \II(n)}  \prod_{V\in\pi} \Big( \sum_{\substack{\sigma \in \NC(V)\\ 
	\sigma\ll 1_{|V|} }}\tilde{r}_\sigma(a|_V)\Big)=\sum_{\pi \in \II(n)}  \prod_{V\in\pi} c_{|V|}(a|_V) 
	=\sum_{\pi \in \II(n)} c_{\pi}(a),
\end{align*}
where $\ll$ stands for the min-max order in $\NC(n)$ (see Definition \ref{order.NC}). Thus, we arrive at the fact that the proposed candidates actually satisfy the infinitesimal Boolean moment-cumulant formula \eqref{InfBooMomCum} for all $n$ and elements $a_1,\dots, a_n\in \AA$. Thus, we conclude that:
$$
	\tilde{\beta}_n (a_1, \dots, a_n)
	=c_n (a_1, \dots, a_n)
	= \sum_{\pi\in \NN\CC_{irr}(n)} \tilde{r}_\pi(a_1, \dots, a_n).
$$

\item Now we prove \eqref{GrassFreeBoo}. Consider the extensions to the Hopf--algebraic framework such that $\Phi$ and $\Phi^\prime$ are the elements in $\operatorname{Hom}(H,\mathbb{C})$ extending $\varphi$ and $\varphi^\prime$ as before, $\tilde{\Phi} = \Phi + \hbar \Phi^\prime$ is the $\mathbb{G}$-valued character associated, and $\tilde{\kappa}$ and $\tilde{\beta}$ are the $\mathbb{G}$-valued infinitesimal characters associated to the free and Boolean cumulants of the pair $(\varphi,\varphi^\prime)$, respectively. Consider a word $w = a_1\cdots a_n\in T_+(\A)$. From the $\mathbb{G}$-valued fixed point equations for $\tilde{\Phi}$, we have that $\tilde{\kappa} \prec \tilde{\Phi} = \tilde{\Phi}\succ \tilde{\beta}$. From the shuffle algebra axioms, the latter equation implies the relation
$$
	\tilde{\Phi}^{-1}\succ \tilde{\kappa} = \tilde{\beta} \prec \tilde{\Phi}^{-1}.
$$ 
From the $\mathbb{G}$-valued case of Equation  \eqref{InvSucc} we have that $\tilde{\Phi}^{-1} = \E_\prec(-\tilde{\beta})$. The latter equation evaluated in $w$ then yields the identity
$$
	\tilde{\Phi}^{-1}(a_1\cdots a_n) = \sum_{\pi\in\NC(n)}(-1)^{|\pi|}\tilde{b}_{\pi}(a_1,\ldots,a_n).
$$
Then we use induction to obtain
\begin{eqnarray*}
	\lefteqn{\tilde{\Phi}^{-1}\succ \tilde{\kappa}(w) 
	= \tilde{\kappa}(a_1\cdots a_n) + \sum_{j=1}^{n-1}\tilde{\kappa}(a_1\cdots a_j)\tilde{\Phi}^{-1}(a_{j+1}\cdots a_n)}\\ 
	&=&  \tilde{\kappa}(w) + \sum_{j=1}^{n-1} 
	\left(\sum_{\pi\in \NC_{irr}(j)} (-1)^{|\pi|-1} \tilde{b}_{\pi}(a_1,\ldots,a_j) \right)
	\left(\sum_{\sigma\in \NC(n-j)} (-1)^{|\sigma|} \tilde{b}_{\sigma}(a_{j+1},\ldots,a_n) \right)\\ 
	&=& \tilde{\kappa}(w) + 
	\sum_{\pi \in \mathcal{NC}(n)\backslash \NC_{irr}(n)}(-1)^{|\pi|-1} \tilde{b}_\pi(a_1,\ldots,a_n).
\end{eqnarray*}
On the other hand
\begin{eqnarray*}
	\tilde{\beta}\prec \tilde{\Phi}^{-1}(w) 
	&=& \sum_{1\in S\subseteq [n]}\tilde{\beta}(a_S)\tilde{\Phi}^{-1}(a_{J_1}|\cdots|a_{J_p})\\ 
	&=&  \sum_{1\in S\subseteq [n]}\tilde{\beta}(a_S)\prod_{j=1}^{p}
	\left(\sum_{\pi\in \NC(|J_j|)} (-1)^{|\pi|}\tilde{b}_\pi(a_{J_j}) \right)\\ 
	&=& \sum_{\pi\in\NC(n)} (-1)^{|\pi|-1} \tilde{b}_{\pi}(a_1,\ldots,a_n).
\end{eqnarray*}
Hence
$$
	\tilde{\kappa}(a_1\cdots a_n) =  \sum_{\pi\in\NC_{irr}(n)} (-1)^{|\pi|-1} \tilde{b}_{\pi}(a_1,\ldots,a_n).
$$
We conclude \eqref{GrassFreeBoo} once we identify $\tilde{r}_n(a_1,\ldots,a_n) = \tilde{\kappa}(a_1\cdots a_n)$.

\item Similarly, we can give a proof of \eqref{GrassBooMon} using the shuffle algebra approach. For this, we recall the $\mathbb{G}$-valued fixed point equation for Boolean cumulants $\tilde{\Phi} - \tilde{\varepsilon} = \tilde{\Phi}\succ \tilde{\beta}$. Let $\tilde{\rho}$ be the $\mathbb{G}$-valued infinitesimal character associated to the monotone cumulants of the pair $(\varphi,\varphi^\prime)$ and  $w = a_1\cdots a_n\in T_+(\A)$ be a word. Using the $\mathbb{G}$-valued monotone moment--cumulant relation and induction we have:
\begin{eqnarray*}
	 \lefteqn{\sum_{\pi\in\NC(n)} \frac{1}{\tau(\pi)!} \tilde{h}_\pi(a) 
	 =\tilde{\Phi}(w) 
	 = \tilde{\beta}(w)+ \sum_{j=1}^{n-1} \tilde{\beta}(a_1\cdots a_j) \tilde{\Phi}(a_{j+1}\cdots a_n)}\\	
	 &=& \tilde{\beta}(w)+ \sum_{j=1}^{n-1} 
	 \left(\sum_{\pi\in\NC_{irr}(j)} \frac{1}{\tau(\pi)!} \tilde{h}_\pi(a_1,\ldots,a_j) \right)
	 \left(\sum_{\sigma\in\NC(n-j)} \frac{1}{\tau(\sigma)!} \tilde{h}_\sigma(a_{j+1},\ldots,a_n) \right)\\ 
	 &=&\tilde{\beta}(w)
	 + \sum_{\pi \in \NC(n)\backslash \NC_{irr}(n)} \frac{1}{\tau(\pi)!} \tilde{h}_\pi(a_1,\ldots,a_n).
\end{eqnarray*}
{{Observe that we used the fact that $\frac{1}{\tau(\pi)!} \frac{1}{\tau(\sigma)!} = \frac{1}{\tau(\pi\sqcup \sigma)!},$ where $\pi \in \NC_{irr}(j),\sigma\in \NC(n-j)$, and $\pi\sqcup \sigma\in \NC(n)$ is the noncrossing partition obtained by taking the disjoint union of $\pi$ and $\sigma$.}} From above, we conclude Equation \eqref{GrassBooMon}.

\item {{Regarding \eqref{GrassFreeMon}, we can use again shuffle algebra. We first recall that 
\begin{eqnarray*}
\tilde{\Phi}^{-1}(w) = \exp^\star(\tilde{\rho})^{-1}(w) &=& \exp^\star(- \tilde{\rho})(w)
                 \\  &=& \sum_{\pi\in\NC(n)} \frac{1}{\tau(\pi)!}(-\tilde{h})_\pi(a_1,\ldots,a_n)
                 \\ &=&\sum_{\pi\in\NC(n)} \frac{(-1)^{|\pi|}}{\tau(\pi)!}\tilde{h}_\pi(a_1,\ldots,a_n) ,
\end{eqnarray*}
for any word $w = a_1\cdots a_n\in H$. On the other hand, we have that $\tilde{\Phi}^{-1} = \mathcal{E}_\prec(\tilde{\kappa})^{-1} = \mathcal{E}_\succ(-\tilde{\kappa}).$ Hence, we have the fixed-point equation 
$$
	\tilde{\Phi}^{-1} - \tilde{\epsilon} = \tilde{\Phi}^{-1}\succ (- \tilde{\kappa}).
$$
Now we can proceed in a similar way to that of proving \eqref{GrassBooMon}. For a word $w = a_1\cdots a_n \in H$, using induction we have that
\begin{eqnarray*}
	 \lefteqn{\sum_{\pi\in\NC(n)} \frac{(-1)^{|\pi|}}{\tau(\pi)!} \tilde{h}_\pi(a_1,\ldots,a_n) 
	 =\tilde{\Phi}^{-1}(w) 
	 = -\tilde{\kappa}(w)- \sum_{j=1}^{n-1} \tilde{\kappa}(a_1\cdots a_j) \tilde{\Phi}(a_{j+1}\cdots a_n)}\\	
	 &=& -\tilde{\kappa}(w)- \sum_{j=1}^{n-1} 
	 \left(\sum_{\pi\in\NC_{irr}(j)} \frac{(-1)^{|\pi|-1}}{\tau(\pi)!} \tilde{h}_\pi(a_1,\ldots,a_j) \right)
	 \left(\sum_{\sigma\in\NC(n-j)} \frac{(-1)^{|\sigma|}}{\tau(\sigma)!} \tilde{h}_\sigma(a_{j+1},\ldots,a_n) \right)\\ 
	 &=&-\tilde{\kappa}(w)
	 - \sum_{\pi \in \NC(n)\backslash \NC_{irr}(n)} \frac{(-1)^{|\pi|-1}}{\tau(\pi)!} \tilde{h}_\pi(a_1,\ldots,a_n),
\end{eqnarray*}
and hence Equation \eqref{GrassFreeMon} follows.
}}

\end{itemize}
\end{proof}

{{
\begin{remark}
We note that to get \eqref{GrassBooFree} using the shuffle approach, we can use equation $\tilde{\kappa}\prec \tilde{\Phi} = \tilde{\beta}\succ\tilde{\Phi}$ and follow arguments analogous to the proof of \eqref{GrassFreeBoo}. To show \eqref{GrassFreeBoo}, we could have followed the proof by Belinschi and Nica (See \cite{BN1}, Proposition 3.9). The proof of  \eqref{GrassBooMon} follows similar ideas to the ones used in the proof of \eqref{GrassFreeBoo}. Regarding \eqref{GrassFreeMon}, we could have followed an approach similar to how we showed \eqref{GrassBooFree}. This requires to proof some interesting combinatorial identities for rooted tree factorial in the spirit of \cite{AHLV}. 
\end{remark}}}

\begin{proof}[Proof of Theorem \ref{Thm1}]
Once we have formulas \eqref{GrassBooFree}, \eqref{GrassFreeBoo}, \eqref{GrassBooMon}, \eqref{GrassFreeMon} we can just focus on the $\hbar$ coefficient to get the respective formulas relating infinitesimal cumulants: \eqref{InfBooFree}, \eqref{InfBooMon}, \eqref{InfFreeBoo}, \eqref{InfFreeMon}.
\end{proof}

{{
\begin{remark}
At the time when the first version of this work was finalized, we did not have the complete picture of relations between infinitesimal cumulants. Indeed, the formulas expressing monotone cumulants in terms of free (or Boolean) cumulants in the multivariate case were missing. These formulas are given in the more recent work \cite{Magnus2021}, which appeared after the first release of the current paper. The extension to the infinitesimal setting can be done by considering  the $\ggg$-valued analogue of the pre-Lie Magnus expansion \eqref{PreLieMagnus0}:
$$
	\tilde{\rho} = \Omega^\prime(\tilde{\kappa}) = -\Omega^\prime(-\tilde{\beta}).
$$
The corresponding formulas between infinitesimal cumulants are
\begin{eqnarray}
	h^\prime_n (a_1, \dots, a_n) 
		&=& \sum_{\pi\in \NN\CC_{irr}(n)} \omega(\pi) \partial b_\pi(a_1, \dots, a_n),  \label{InfMonBoo}\\
	h^\prime_n (a_1, \dots, a_n) 
		&=& \sum_{\pi\in \NN\CC_{irr}(n)} (-1)^{|\pi|-1}\omega(\pi) \partial r_\pi(a_1, \dots, a_n).\label{InfMonFree}
\end{eqnarray}
Here $\omega$ is Murua's function that assigns a real value to each partition $\pi$, see Definition \ref{def.murua.coeff} in the appendix at the end.  In addition, we note that, based on the first version of this paper, a generalization of the six cumulant-cumulant formulas to the infinitesimal operator-valued case was developed in the recent work \cite{perales2020operator}.
\end{remark}}}

\section{Infinitesimal Boolean Bercovici--Pata bijection}
\label{sec:Bercovici.Pata}

The objective of this section is to apply our previous results to get an infinitesimal analogue of the work of Belinschi and Nica \cite{BN1,BN2,BN3} at the algebraic level.

{{We consider an \textit{infinitesimal law on $k$ variables}, which is a pair $\tilde{\mu}=(\mu,\mu^\prime)$  of linear functionals $\mu,\mu^\prime:\cc\langle z_1,\dots,z_k\rangle \to \cc$ such that $\mu(1) = 1$ and $\mu^\prime(1)=0$. We also define
$$
\tilde{\DD}(k) = \{\tilde{\mu} = \mu + \hbar{\mu'\,:\, (\mu,\mu')\,\mbox{ is an infinitesimal law on $k$ variables}}\}.
$$}Given $\tilde{\mu} = \mu + \hbar\mu'\in \tilde{\DD}(k)$, we can compute its $\ggg$-valued free and Boolean cumulants $\tilde{r}_n(\tilde{\mu})=r_n(\tilde{\mu})+\hbar r^\prime_n(\tilde{\mu})$, and $\tilde{b}_n(\tilde{\mu})=b_n(\tilde{\mu})+\hbar b^\prime_n(\tilde{\mu})$. With these concepts at hand, we can define an infinitesimal analogue of the \emph{Boolean Bercovici--Pata bijection} studied in \cite{BN1}.

\begin{definition}
The \textit{infinitesimal Boolean Bercovici--Pata bijection} $\tilde{\bb}:\tilde{\DD}(k)\to\tilde{\DD}(k)$ is the map $\tilde{\mu}\mapsto \tilde{\bb}(\tilde{\mu})$ such that $\tilde{\bb}(\tilde{\mu})$ is 
uniquely defined by the fact that
$$
	\tilde{r}_n(\tilde{\bb}(\tilde{\mu}))= \tilde{b}_n(\tilde{\mu}), \qquad\forall n\geq 1.
$$
\end{definition}

\begin{remark}
In the shuffle approach, the map $\tilde{\bb}$ can be simply expressed in terms of the right action, $\bole$, of $\tilde{G}$ on itself defined in \eqref{shuffleadjointgroup}. For $\tilde{\Phi} \in \tilde{G}$
$$
\tilde{\mathbb{B}}(\tilde\Phi)= \tilde\Phi\ \bole \tilde\Phi \in \tilde G.
$$
If we use the infinitesimal free and Boolean cumulants, we see that $\tilde\Phi\ \bole \tilde\Phi=\mathcal{E}_\prec(\theta_{\tilde\Phi}(\tilde\kappa))=\mathcal{E}_\prec(\tilde\beta) = \mathcal{E}_\prec(\mathcal{L}_\succ(\tilde\Phi))$. Thus we can also define $\tilde{\bb}$ by
$$
	\tilde{\mathbb{B}}(\tilde\Phi)= \mathcal{E}_\prec (\mathcal{L}_\succ(\tilde\Phi)).
$$
Recall that the left and right-shuffle logarithms are defined by $\mathcal{L}_\prec := (\tilde\Phi - \epsilon) \prec \tilde\Phi^{-1}$ respectively $\mathcal{L}_\succ := \tilde\Phi^{-1} \succ (\tilde\Phi - \epsilon)$. It is clear that $\tilde{\bb}$ has a compositional inverse $\tilde{\bb}^{-1}$ given by
$$
	\tilde{\mathbb{B}}^{-1}(\tilde\Psi)= \mathcal{E}_\succ (\mathcal{L}_\prec(\tilde\Psi))=\tilde\Psi \ \bori \tilde\Psi,
$$
where for the second equality we can follow similar steps.
\end{remark}

Now we want to define an infinitesimal version of the $\bb_t$-transform studied in \cite{BN2}. First we recall the infinitesimal free additive convolution and define its Boolean analogue on $\tilde{\DD}(k)$. 

\begin{definition}
Given $\tilde{\mu},\tilde{\nu}\in \tilde{\DD}(k)$, the \textit{infinitesimal free additive convolution of $\tilde{\mu}$ and $\tilde{\nu}$} is the infinitesimal distribution $\tilde{\mu}\boxplus\tilde{\nu}\in \tilde{\DD}(k)$ uniquely determined by the fact that
$$
	\tilde{r}_n(\tilde{\mu}\boxplus \tilde{\nu})=\tilde{r}_n(\tilde{\mu})+\tilde{r}_n(\tilde{\nu}).
$$
Similarly, the \textit{infinitesimal Boolean additive convolution of $\tilde{\mu}$ and $\tilde{\nu}$} is the infinitesimal distribution $\tilde{\mu}\uplus\tilde{\nu}\in \tilde{\DD}(k)$ uniquely determined by the fact that
$$
	\tilde{b}_n(\tilde{\mu}\uplus \tilde{\nu})=\tilde{b}_n(\tilde{\mu})+\tilde{b}_n(\tilde{\nu}).
$$
\end{definition}

\begin{remark}
In shuffle algebra, additive convolution is defined in the following way. If $\tilde{\Phi},\tilde{\Psi}\in\tilde{G}$ are the characters associated to $\tilde{\mu},\tilde{\nu}\in\tilde{\DD}(k)$, then the character $\tilde{\Phi}\boxplus\tilde{\Psi}\in \tilde{G}$ associated to $\tilde{\mu}\boxplus\tilde{\nu}\in \tilde{\DD}(k)$ is given by 
$$
	\tilde{\Phi}\boxplus\tilde{\Psi}:=\EE_\prec(\LL_\prec(\tilde{\Phi})+\LL_\prec(\tilde{\Psi})),
$$
and the the character $\tilde{\Phi}\uplus\tilde{\Psi}\in \tilde{G}$ associated to $\tilde{\mu}\uplus\tilde{\nu}\in \tilde{\DD}(k)$ is given by
$$
	\tilde{\Phi}\uplus\tilde{\Psi}:=\EE_\succ(\LL_\succ(\tilde{\Phi})+\LL_\succ(\tilde{\Psi})).
$$
\end{remark}

\noindent For $\tilde{\mu}\in\tilde{\DD}(k)$ and $s\geq 0$, we denote by $\tilde{\mu}^{\boxplus s},\tilde{\mu}^{\uplus s}\in\tilde{\DD}(k)$ the infinitesimal laws such that 
$$
	\tilde{r}_n\pai \tilde{\mu}^{\boxplus s}\pad 
	=s\tilde{r}_n(\tilde{\mu})\qquad\text{and}\qquad \tilde{b}_n\pai \tilde{\mu}^{\uplus s}\pad =s\tilde{b}_n(\tilde{\mu}).
$$

Now we are ready to prove Theorem \ref{Thm1.2}.

\begin{proof}[Proof of Theorem \ref{Thm1.2}]
Given an infinitesimal law $\tilde\mu  = (\mu,\mu^\prime)$, consider the associated linear functionals $\Phi,\Phi^\prime:H\to\mathbb{C}$ and $\tilde\Phi = \Phi + \hbar\Phi^\prime$. We will show that if 
$$
	\tilde{\mathbb{B}}_t(\tilde\Phi) = \left(\tilde\Phi^{\boxplus 1+t} \right)^{\uplus \frac{1}{1+t}},
$$
then $\tilde{\mathbb{B}}_1 = \tilde{\mathbb{B}}$ and $\tilde{\mathbb{B}}_s\circ \tilde{\mathbb{B}}_t = \tilde{\mathbb{B}}_{s+t}$, for any $s,t\geq0.$ Let  $\tilde{\kappa}= \mathcal{L}_\prec(\tilde{\Phi})$ be the $\mathbb{G}$-valued free infinitesimal cumulant character of $\tilde{\Phi}$. We can follow the proof of Lemma 42 in \cite{EP4} in order to obtain
\begin{equation}
\label{BPShuffle}
	\tilde{\mathbb{B}}_t(\tilde\Phi) = \E_\prec(t\theta_{\tilde\Phi}(\tilde\kappa))^{\uplus \frac{1}{t}}= \E_\prec\left(\theta_{\E_\prec(t\tilde\kappa)}(\tilde\kappa)\right).
\end{equation}
From above, it follows that $\tilde{\mathbb{B}}_1 = \tilde{\mathbb{B}}$. To prove the semigroup property, by \eqref{BPShuffle} we have
$$ 
	\tilde{\mathbb{B}}_s\circ \tilde{\mathbb{B}}_t(\tilde\Phi) 
	= \tilde{\mathbb{B}}_s\left( \E_\prec\left(\theta_{\E_\prec(t\tilde\kappa)}(\tilde\kappa)\right)\right) 
	= \E_\prec\left(\theta_{\E_\prec\left(s\theta_{\E_\prec(t\tilde\kappa)}(\tilde\kappa)\right)}(\theta_{\E_\prec(t\tilde\kappa)}(\tilde\kappa))\right).
$$ 
Looking at the argument of the last exponential, by the action property of $\theta$ we can get
\begin{eqnarray*}
	\tilde{\gamma}
	:=\theta_{\E_\prec\left(s\theta_{\E_\prec(t\tilde\kappa)}(\tilde\kappa)\right)}(\theta_{\E_\prec(t\tilde\kappa)}(\tilde\kappa)) 
	&=& \theta_{\E_\prec(t\tilde{\kappa})\star \E_\prec\left(\theta_{\E_\prec(t\tilde\kappa)}(s\tilde{\kappa}) \right)}(\tilde\kappa).
\end{eqnarray*}
By \eqref{shuffleadjointgroup} and \eqref{addconv},
$$
	\E_\prec(t\tilde{\kappa})\star \E_\prec\left(\theta_{\E_\prec(t\tilde\kappa)}(s\tilde{\kappa}) \right)
	= \E_\prec(t\tilde\kappa) \star\left( \E_\prec(s \tilde\kappa) \bole \E_\prec(t\tilde\kappa) \right) 
	= \E_\prec(t\tilde\kappa) \boxplus \E_\prec(s \tilde\kappa) 
	= \E_\prec((t+s)\tilde\kappa).
$$
Hence
$$ 
	\tilde{\mathbb{B}}_s\circ \tilde{\mathbb{B}}_t(\tilde\Phi)  
	= \E_\prec(\tilde{\gamma})  
	= \E_\prec(\theta_{\E_\prec((t+s)\tilde\kappa)}(\tilde\kappa)) 
	= \tilde{\mathbb{B}}_{t+s}(\Phi),
$$
where we used \eqref{BPShuffle} in the last equality.
\end{proof}

\begin{remark}
We can also do this in the usual approach. The main idea is to extend into the $\ggg$-valued case the ideas in \cite{BN2}. For instance to prove that $\tilde\bb_1(\tilde\mu)=\tilde\bb(\tilde\mu)$, we proceed like this:

Let us denote by $\tilde{\nu}_1:=\bb_1(\tilde{\mu})=\pai \tilde{\mu}^{\boxplus 2}\pad^{\uplus \frac{1}{2}}$ and $\tilde{\nu}:=\bb(\tilde{\mu})$. Now we can use \eqref{GrassBooFree} to get
$$ 
	\tilde{b}_n(\tilde{\nu}_1)
	=\frac{1}{2}  \tilde{b}_n\pai \tilde{\mu}^{\boxplus 2}\pad 
	=\frac{1}{2}\sum_{\pi\in \NN\CC_{irr}(n)} \tilde{r}_\pi \pai \tilde{\mu}^{\boxplus 2}\pad 
	=\frac{1}{2}\sum_{\pi\in \NC_{irr}(n)} 2^{|\pi|} \tilde{r}_\pi  (\tilde{\mu}).
$$
On the other hand, for all $n\geq 1$ we have $q\tilde{r}_n(\tilde{\nu})= \tilde{b}_n(\tilde{\mu})$, so we can compute
\begin{align*}
	\tilde{b}_n(\tilde{\nu})
	=\sum_{\sigma\in \NN\CC_{irr}(n)}  \tilde{r}_\sigma (\tilde{\nu}) 
	&=\sum_{\sigma\in \NN\CC_{irr}(n)}  \tilde{b}_\sigma (\tilde{\mu}) 
	=\sum_{\sigma\in \NN\CC_{irr}(n)} \prod_{V\in\sigma} \Big( \sum_{\tau\in \NN\CC_{irr}(V)}  \tilde{b}_\tau (\tilde{\mu})\Big)\\
	&=\sum_{\sigma\in \NN\CC_{irr}(n)}\sum_{\pi\ll\sigma}  \tilde{r}_\pi (\tilde{\mu})
	= \sum_{\pi\in \NN\CC_{irr}(n)}\tilde{r}_\pi (\tilde{\mu} )\sum_{\sigma \gg \pi} 1 ,
\end{align*}
where the last equality we just switched the order of the sums. {Finally, from Proposition \ref{formula.order} below, we know that $card\{ \sigma \in NC(n)|\sigma \gg \pi \}=2^{|\pi|-1}$, and by the previous analysis we get that $\tilde{b}_n(\tilde{\nu}_1)= \tilde{b}_n(\tilde{\nu})$ for all $n\geq 1$. Finally, Boolean moment-cumulant formula \eqref{GrassBooMomCum} implies that $\tilde{\varphi}_n(\tilde{\nu}_1)= \tilde{\varphi}_n(\tilde{\nu})$ for all $n\geq 1$ and we conclude that $\tilde{\nu}_1=\tilde{\nu}$.
 }
\end{remark}

\appendix

\section{Appendix}
\label{sec:appendix}


\subsection{Partitions}
\label{ssec:parititions}

Here we give a brief introduction to the definitions and notations used in this paper regarding partitions of a set. (For a broader explanation on these sets we refer the reader to \cite{AHLV})

\begin{definition}[Types of partitions]
\label{def:partitiontypes}
Let us fix a positive integer $n$. 
\begin{itemize}
\item A \textit{partition $\pi$ of $[n]:=\{1,\dots,n\}$} is a set of the form $\pi=\{V_1,\dots,V_k\}$ where $V_1,\dots, V_k$ (called \textit{blocks} of $\pi$) are pairwise disjoint non-empty subsets of $[n]$ such that $V_1\cup \dots \cup V_k=[n]$. 
\item The number of blocks of $\pi$ is denoted by $|\pi|$. 
\item We say that $\pi$ is an \textit{interval partition} if all the blocks $V\in \pi$ are of the form $V=\{i,i+1,\dots,i+j\}$ for some integers $1\leq i\leq i+j \leq n$. 
\item We say that $\pi$ is a \textit{non-crossing partition} if for every $1\leq i < j < k < l \leq n$ such that $i, k\in V_a$ (are on the same block) and $j,l\in V_b$ (are on the same block), then it necessarily follows that $a=b$ (all $i,j,k,l$ are in the same block). 
\item For two distinct blocks $V,W\in\pi$, we say that \textit{$V$ is nested inside $W$} if there exist $i,j\in W$ such that for all $v\in V$ we have $i<v<j$.
\item We say that $(\pi,\lambda)$ is a \textit{monotone partition} if $\pi$ is non-crossing and $\lambda:\pi\to \{1,\dots,|\pi|\}$ is a bijective function (an ordering of the blocks of $\pi$) such that if $V,W\in\pi$ and $V$ is nested inside $W$, then $\lambda(W)<\lambda(V)$.
\end{itemize}
We will denote by $\PP(n)$, $\NN\CC(n)$, $\II(n)$, $\MM(n)$ the sets of all, non-crossing, interval, and monotone partitions of $[n]$, respectively.
\end{definition}

Given a partition $\pi\in \NC(n)$, it is useful to know that the number of possible orders $\lambda$ on the blocks of $\pi$ such that $(\pi,\lambda)$ becomes a monotone partition is 
\begin{equation}
\label{monotone.labellings}
	m(\pi):=\frac{|\pi|!}{\tau(\pi)!},
\end{equation}
where $\tau(\pi)!$ is the tree factorial of the nesting forest of a partition $\pi$. {We refer the reader to \cite{AHLV} for a proof of this identity and further details on this topic.} 
From the previous consideration we obtain the following observation:

\begin{remark}
For any sequence of coefficients $(c_\pi)_{\pi\in \NC(n)}$ we have that
\begin{equation}
	\sum_{(\pi,\lambda)\in \MM(n)} \frac{1}{|\pi|!} c_\pi 
		= \sum_{\pi\in \NC(n)} \frac{1}{\tau(\pi)!} c_\pi.
\end{equation}
\end{remark}

\begin{definition}
	\label{order.NC}
In this note we consider two partial orders on $\NN\CC(n)$.
\begin{itemize}
\item \textit{Reversed refinement order} $\leq$. For $\pi,\sigma\in \NN\CC(n)$, we write ``$\pi \leq \sigma$'' if every block of $\sigma$ is a union of blocks of $\pi$. The maximal element of $\NN\CC(n)$ with this order is $1_n:=\{\{1,\dots,n\}\}$ (the partition of $[n]$ with only one block), and the minimal element is $0_n:=\{\{1\},\{2\},\dots,\{n\}\}$ (the partition of $[n]$ with $n$ blocks). $\NC(n)$ with this order $\leq$ actually form a lattice. For $\sigma,\pi\in \NC(n)$ we write $\text{M\"ob}(\sigma,\pi)$ to refer to the M\"obius function on $\NC(n)$. For much more detailed view this topic we refer the reader to \cite{NS}.
\item The \textit{min-max order} $\ll$. For $\pi,\sigma\in \NN\CC(n)$, we write ``$\pi \ll \sigma$'' to mean that $\pi \leq \sigma$ and that for every block $V$ of $\sigma$ there exists a block $W$ of $\pi$ such that $\min(V),\max(V) \in W$. We refer the reader to \cite{BN1} for a broader explanation on this partial order.
\end{itemize}
\end{definition}

\begin{definition}
\label{defi.irreducible.partitions}
We will say that a partition $\pi$ is \textit{irreducible} if $\pi\ll 1_n$. This is equivalent to the fact that $1$ and $n$ are in the same block of $\pi$. The set of non-crossing irreducible partitions and monotone irreducible partitions are denoted by $\NC_{irr}(n)$ and $\MM_{irr}(n)$, respectively.
\end{definition}

It is useful to recall the following property of the min-max order, that is a particular case of Proposition 2.13 in \cite{BN1}:

\begin{proposition}
	\label{formula.order}
Let $\pi\in \NC_{irr}(n)$ and $p$ an integer with $1\leq p\leq |\pi|$. Then
$$
	card\{ \sigma \in \NC(n)|\sigma \gg \pi \text{ and } |\sigma|=p\}=\binom{|\pi|-1}{p-1}.
$$
\end{proposition}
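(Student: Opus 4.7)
Since $\sigma \in NC_{irr}(n)$, the elements $1$ and $n$ lie in a common block of $\sigma$, which I call the \emph{outer block} and denote $W^*$. My plan is to construct a cardinality-preserving bijection
$$
\Phi \colon \{\pi \in NC(n) : \pi \gg \sigma\} \;\longleftrightarrow\; \{T \subseteq \sigma : W^* \in T\},
$$
with $|\pi| = |T|$ under the correspondence. Granting this, the subsets with $|T| = p$ are in bijection with subsets of $\sigma \setminus \{W^*\}$ of size $p-1$, and there are exactly $\binom{|\sigma|-1}{p-1}$ such subsets.

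For the forward map, take $\pi \gg \sigma$ and a block $V \in \pi$. The defining property of $\gg$ yields a block $W \in \sigma$ with $\min V, \max V \in W$; combined with $\sigma \leq \pi$, this forces $W \subseteq V$ and hence $\min W = \min V$, $\max W = \max V$. Such a $W$ is unique, because any two candidates would both contain $\min V$ and so coincide as blocks of the partition $\sigma$. I call this $W$ the \emph{leader} of $V$, and set $\Phi(\pi)$ to be the set of leaders. Then $|\Phi(\pi)| = |\pi|$, and the block of $\pi$ containing $\{1,n\}$ has leader $W^*$, so $W^* \in \Phi(\pi)$.

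The inverse relies on the nesting tree of $\sigma$. Because $\sigma$ is non-crossing and irreducible, the nesting relation on its blocks forms a rooted tree with root $W^*$: any non-root block $U$ has a unique parent, namely the smallest block of $\sigma$ whose extent $[\min,\max]$ strictly encloses $U$'s extent. Given $T$ with $W^* \in T$, I define for each $U \in \sigma$ a representative $\mathrm{rep}(U) \in T$ by $\mathrm{rep}(U) := U$ if $U \in T$, and otherwise by the nearest ancestor of $U$ lying in $T$ (which exists since $W^* \in T$ is an ancestor of every block). Set $\pi_T$ to be the partition with blocks $V_W := \bigcup\{U \in \sigma : \mathrm{rep}(U) = W\}$ indexed by $W \in T$. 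Since every $U \neq W$ with $\mathrm{rep}(U)=W$ is strictly nested inside $W$, one has $\min V_W = \min W$ and $\max V_W = \max W$; in particular $W$ is the leader of $V_W$, giving $|\pi_T| = |T|$ and $\Phi(\pi_T) = T$ as soon as $\pi_T$ is shown to lie in $\{\pi \in NC(n) : \pi \gg \sigma\}$.

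The main obstacle is verifying that $\pi_T$ is non-crossing; the other properties ($\pi_T \geq \sigma$ and the leader condition witnessing $\pi_T \gg \sigma$) are built into the construction. I would argue non-crossing by contradiction: a crossing $a < b < c < d$ with $\{a,c\} \subseteq V_W$ and $\{b,d\} \subseteq V_{W'}$, $W \neq W'$, would yield blocks $A, B, C, D \in \sigma$ containing $a, b, c, d$ with $\mathrm{rep}(A) = \mathrm{rep}(C) = W$ and $\mathrm{rep}(B) = \mathrm{rep}(D) = W'$. Because $\sigma$ is non-crossing, one checks that $B$ must sit in the nesting tree so that the path from $B$ up to $W^*$ passes through the same $T$-element that the path from $A$ (or from $C$) first hits, forcing $\mathrm{rep}(B)$ to equal $\mathrm{rep}(A)$ or $\mathrm{rep}(C)$, i.e.\ $W = W'$, a contradiction. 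The case split (according to whether $A = C$ and whether $d \leq \max A$) is routine but somewhat verbose; what makes each branch close is the tree structure of the nesting relation on $\sigma$, together with the fact that ancestors on the two paths from $B$ to $W^*$ and from $A$ (or $C$) to $W^*$ must merge once they reach a common ancestor block. Once non-crossingness of $\pi_T$ is in hand, the bijection is complete, and the count $\binom{|\sigma|-1}{p-1}$ follows.
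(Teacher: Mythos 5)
The paper does not actually prove this proposition: it is stated as a particular case of Proposition 2.13 of \cite{BN1} and simply cited, so there is no in-paper argument to compare against. Your strategy --- a bijection between $\{\pi\in NC(n):\pi\gg\sigma\}$ and the subsets $T\subseteq\sigma$ containing the outer block $W^*$, implemented via the ``leader'' of each block --- is the standard route and is the correct one. The forward map is argued cleanly (well-definedness and uniqueness of leaders, $W^*$ always being a leader), and the construction of $\pi_T$ from $T$ via nearest $T$-ancestors in the nesting tree is also correct.

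Two points need tightening before the count follows. First, the non-crossing verification you defer is genuinely needed, but it can be done without a four-block case analysis: writing $E(W):=[\min W,\max W]$ for the extent of a block, one shows $V_W=E(W)\setminus\bigcup\{E(W''): W''\in T,\ E(W'')\subsetneq E(W)\}$, using that the extents of blocks of a non-crossing partition form a laminar family (any two are nested or disjoint) together with the observation that an element of a block $U$ cannot lie in $E(W'')$ for a different block $W''$ unless $E(U)\subseteq E(W'')$ (otherwise $\min U<\min W''<u<\max W''$ is a crossing). A partition whose blocks are ``a laminar interval minus its laminar sub-intervals'' is automatically non-crossing. Second --- and this is the actual logical gap --- you verify only $\Phi(\pi_T)=T$, i.e.\ one of the two compositions. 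That gives injectivity of $T\mapsto\pi_T$ and surjectivity of $\Phi$, hence only the inequality $\mathrm{card}\{\pi\gg\sigma,\ |\pi|=p\}\geq\binom{|\sigma|-1}{p-1}$. To conclude equality you must also show that a partition $\pi\gg\sigma$ is determined by its leader set, e.g.\ that every block $V$ of $\pi$ equals $V_{W}$ for $W$ its leader; this again follows from non-crossingness of $\pi$ via the same extent lemma (a $\sigma$-block $U\subseteq V$ cannot have a $T$-ancestor with extent strictly between $E(U)$ and $E(V)$, since that ancestor is the leader of another block of $\pi$ whose extent would then cross or swallow $E(V)$). With these two additions the bijection, and hence the count, are complete.
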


{
In particular, we have the following formula
\begin{equation}
	card\{ \sigma \in \NC(n)|\sigma \gg \pi \}
		=\sum_{p=1}^{|\pi|} \binom{|\pi|-1}{p-1}=  2^{|\pi|-1}.
\end{equation}}

{
When writing the monotone cumulants in terms of free or Boolean cumulants, it was recently shown in \cite{Magnus2021} that the coefficients are dictated by Murua's $\omega$ function.

\begin{definition}[Murua's $\omega$]
\label{def.murua.coeff}
Let $\pi\in \NC_{irr}(n)$. We will write $\omega_k(\pi)$ for the number of increasing $k$-colored non-crossing partitions, namely, the number of ways one can decorate the blocks $V\in\pi$ with a color $f(V)\in [k]$ in such a way that if the block $V$ is nested inside the block $W$, this implies $f(V)<f(W)$.  Then, for every $\pi\in\NC_{irr}(n)$  we define
\begin{equation}
\label{eq.muruaomega}
	\omega(\pi):=\sum\limits_{k=1}^n\frac{(-1)^{k+1}}{k}\omega_k(\pi).
\end{equation}
\end{definition}
}



\subsection{Independences}
\label{ssec:independences}

Different notions of independence play an important role in non-commutative probability. Muraki \cite{Mur2} proved that there are only five natural notions of independence: tensor, free, Boolean, monotone and anti-monotone. We give the precise definitions of the independences used in this paper.

\begin{definition} (Independences) \label{def:independences}
Let $(\A,\varphi)$ be a non-commutative probability space. Consider an index set $I$ and $\A_1,\ldots,\A_k$ subalgebras of $\A$.
\begin{enumerate}
    \item We say that $\A_1,\ldots,\A_k$ are \textit{freely independent} if each $\A_i$ is unital and 
$$
	\varphi(a_1\cdots a_n) = 0
$$ 
whenever $n\geq1$, $a_1\in \A_{i_1},\ldots,a_n\in \A_{i_n}$,  $i_j \neq i_{j+1}$ for $1\leq j< n$ and $\varphi(a_1) = \cdots =\varphi(a_n) = 0$.

    \item We say that $\A_1,\ldots,\A_k$ are \textit{Boolean independent} if
$$
    	\varphi(a_1\cdots a_n) = \varphi(a_1)\cdots\varphi(a_n)
$$ 
whenever $n\geq1$, $a_1\in \A_{i_1},\ldots,a_n\in \A_{i_n}$ and $i_j \neq i_{j+1}$ for $1\leq j< n$.

    \item We say that $\A_1,\ldots,\A_k$ are \textit{monotone independent} if
$$
    	\varphi(a_1\cdots a_\ell \cdots a_n) = \varphi(a_\ell)\varphi(a_1\cdots a_{\ell-1}a_{\ell+1}\cdots a_n)
$$ 
whenever $n\geq1$, $a_1\in \A_{i_1},\ldots,a_n\in \A_{i_n}$,  $i_j \neq i_{j+1}$ for $1\leq j< n$, $i_{\ell-1}< i_\ell$ and $i_\ell>i_{\ell+1}.$
\end{enumerate}
\end{definition}

Speicher introduced free cumulants in order to characterise Voiculescu's free independence. More precisely, he proved that free independence is equivalent to the condition of vanishing mixed free cumulants. Boolean cumulants satisfy an analogous characterisation for Boolean independence. In this spirit, Février and Nica introduced the notion of infinitesimal free cumulants such that the condition of vanishing mixed cumulants is equivalent to the notion of infinitesimal free independence. For completeness, we provide the definition of infinitesimal freeness.

\begin{definition} (Infinitesimal freeness)
Let $(\A,\varphi,\varphi^\prime)$ be an incps. Let $\A_1,\ldots,\A_k \subset \A$ be unital subalgebras of $\A$. We say that $\A_1,\ldots,\A_k$ are \textit{infinitesimally free} if the following holds: for each $n\geq1$ and any sequence of indices $i_1,\ldots,i_n\in \{1,\ldots,k\}$ such that $i_j\neq i_{j+1}$, for $1\leq j<n$, and elements $a_1\in \A_{i_1},\ldots,a_n\in \A_{i_n}$ such that $\varphi(a_j) = 0$, for $j=1,\ldots,n$, the following is satisfied

\begin{enumerate}
\item 
$\varphi(a_1 \cdot_{\!\scriptscriptstyle{\A}} \cdots \cdot_{\!\scriptscriptstyle{\A}} a_n)=0.$

\item 
$\varphi^\prime (a_1 \cdot_{\!\scriptscriptstyle{\A}} \cdots \cdot_{\!\scriptscriptstyle{\A}} a_n) = \varphi(a_1 \cdot_{\!\scriptscriptstyle{\A}} a_n)\varphi(a_2 \cdot_{\!\scriptscriptstyle{\A}} a_{n-1})\cdots\varphi(a_{(n-1)/2} \cdot_{\!\scriptscriptstyle{\A}} a_{(n+3)/2}) \varphi^\prime(a_{(n+1)/2})$
if $n$ is odd and $i_1=i_n$, $i_2=i_{n-1},\ldots,i_{(n-1)/2}=i_{(n+3)/2}$, and $\varphi^\prime (a_1 \cdot_{\!\scriptscriptstyle{\A}} \cdots \cdot_{\!\scriptscriptstyle{\A}} a_n) =0$ otherwise.
\end{enumerate}
\end{definition}


{One could use the definition of infinitesimal Boolean cumulants in order to obtain the notion of infinitesimal Boolean independence.} Let $(\A,\varphi,\varphi^\prime)$ be an incps and $\A_1,\ldots,\A_k$ be subalgebras of $\A$. Consider also the $\mathbb{G}$-valued Boolean cumulants $\{\tilde{b}_n \colon  \A\to\mathbb{G}\}_{n\geq1}$. 
Then, we assume the vanishing mixed cumulants condition for $\{\tilde{b}_n\colon \A\to\mathbb{G}\}_{n\geq1}$, i.e.,
$$
	\tilde{b}_n(a_1,\ldots,a_n)=0
$$
when $n\geq2$, and if there exists $1\leq s<r\leq n$ such that $a_r\in \A_{i(r)}, a_s\in \A_{i(s)}$ and $i(r)\neq i(s)$. 
\par We want to find some conditions for $\varphi$ and $\varphi^\prime$, which are equivalent to the vanishing of mixed cumulants. 
Let $n\geq1$ and take elements $a_1\in \A_{i(1)},\ldots,a_n\in \A_{i(n)}$, where $i_1,\ldots,i_n\in \{1,\ldots, k\}$ and $i_j\neq i_{j+1}$ for $1\leq j<n$. Since we are assuming the vanishing mixed cumulants condition, the unique partition $\pi$ which does not give a zero contribution in the sum
\begin{equation}
\label{BTilde}
	\tilde{\varphi}(a_1 \cdot_{\!\scriptscriptstyle{\A}} \cdots \cdot_{\!\scriptscriptstyle{\A}} a_n) 
	= \sum_{\pi\in \mathcal{I}(n)} \tilde{\beta}_\pi(a_1,\ldots,a_n),
\end{equation}
is $\pi = \{\{1\},\{2\},\ldots,\{n\}\}.$ Hence
$$ 
	\tilde{\varphi}(a_1 \cdot_{\!\scriptscriptstyle{\A}} \cdots \cdot_{\!\scriptscriptstyle{\A}} a_n) 
	= \sum_{\pi\in \mathcal{I}(n)} \tilde{\beta}_\pi(a_1,\ldots,a_n) 
	= \tilde{\varphi}(a_1)\cdots \tilde{\varphi}(a_n).
$$
Recalling that $\tilde{\varphi} = \varphi + \hbar \varphi^\prime$, we have that $ \tilde{\varphi}(a_1 \cdot_{\!\scriptscriptstyle{\A}} \cdots \cdot_{\!\scriptscriptstyle{\A}} a_n) =  \tilde{\varphi}(a_1)\cdots \tilde{\varphi}(a_n)$ is equivalent to
\allowdisplaybreaks
\begin{eqnarray}
\label{IBI1}
	\varphi(a_1 \cdot_{\!\scriptscriptstyle{\A}} \cdots \cdot_{\!\scriptscriptstyle{\A}} a_n) 
	&=& \varphi(a_1)\cdots\varphi (a_n),\\
\label{IBI2}
	\varphi^\prime(a_1 \cdot_{\!\scriptscriptstyle{\A}} \cdots \cdot_{\!\scriptscriptstyle{\A}} a_n) 
	&=& \sum_{m=1}^n \varphi^\prime(a_m) \prod_{1\leq k\leq n \atop k\neq m} \varphi(a_k).
\end{eqnarray}

The above equations provide the conditions for infinitesimal Boolean independence.

\begin{definition}
Let $(\A,\varphi,\varphi^\prime)$ be an incps. Consider $\A_1,\ldots, \A_k$ subalgebras of $\A$. We say that $\A_1,\ldots\,\A_k$ are \textit{infinitesimally Boolean independent} if for each $n\geq1$ and any sequence of indices $i_1,\ldots,i_n\in \{1,\ldots,k\}$ such that $i_j\neq i_{j+1}$ for $j=1,\ldots,n-1$, and elements $a_1\in \A_{i_1},\ldots, a_n\in \A_{i_n}$, we have that \eqref{IBI1} and \eqref{IBI2} hold. 
\end{definition}

We note that condition \eqref{IBI1} simply refers to the usual Boolean independence.

{
\begin{remark}
The above definition coincides with the first order Boolean differential independence introduced in \cite{Has}.
\end{remark}
}

\begin{remark}
	It is easy to prove that infinitesimal Boolean independence recently defined actually implies the conditions of vanishing mixed Boolean and infinitesimal Boolean cumulants. One can obtain a proof of this fact by following the usual case of Boolean independence.
\end{remark}



\bibliographystyle{alpha}
\bibliography{Infinitesimal230821}	

\end{document}